\documentclass{article}
\usepackage{amscd}
\usepackage{times}
\usepackage{latexsym}
\usepackage{graphicx}
\usepackage{amsmath}
\usepackage{amssymb}
\usepackage{dsfont}
\usepackage{mathrsfs}
\usepackage{epsfig}
\usepackage{graphicx,graphics}
\usepackage{pstricks}
\usepackage{pst-all}

\newtheorem{theorem}{Theorem}[section]
\newtheorem{definition}[theorem]{Definition}
\newtheorem{lemma}[theorem]{Lemma}
\newtheorem{proposition}[theorem]{Proposition}

\newtheorem{conj}{Conjecture}
\newcommand{\dist}{\mathop{\mathrm{dist}}}
\newenvironment{proof}[1][Proof]{\noindent\textbf{Proof:} }{\hfill\rule{3mm}{3mm}}

\setlength{\parindent}{14.9pt} \setlength{\parskip}{4pt plus2pt
minus1pt} \setlength{\baselineskip}{20pt plus22pt minus1pt}
\setlength{\textheight}{22true cm} \setlength{\textwidth}{15 true
cm}
\setlength{\oddsidemargin}{0.5cm}\setlength{\evensidemargin}{0.5cm}

\pagestyle{plain}

\makeatletter \@addtoreset{equation}{section} \makeatletter

\begin{document}

\title {\Large\bf Dimensions of Biquadratic Spline Spaces over T-meshes}
\author{Jiansong Deng  \quad
Falai Chen \quad Liangbing Jin \\
\small Department of Mathematics \\
\small University of Science and Technology of China\\
\small Hefei, Anhui, P. R. of China\\
\small Email: \texttt{dengjs|chenfl@ustc.edu.cn}}
\date{}
\maketitle

\emph{\textbf{Abstract} This paper discusses the dimensions of the
spline spaces over T-meshes with lower degree. Two new concepts are
proposed: extension of T-meshes and spline spaces with homogeneous
boundary conditions. In the dimension analysis, the key strategy is
linear space embedding with the operator of mixed partial
derivative. The dimension of the original space equals the
difference between the dimension of the image space and the rank of
the constraints which ensuring any element in the image space has a
preimage in the original space. Then the dimension formula and basis
function construction of bilinear spline spaces of smoothness order
zero over T-meshes are discussed in detail, and a dimension lower
bound of biquadratic spline spaces over general T-meshes is
provided. Furthermore, using level structure of hierarchical
T-meshes, a dimension formula of biquadratic spline space over
hierarchical T-meshes are proved. A topological explantation of the
dimension formula is shown as well.}

\section{\label{Introduction}Introduction}

A T-mesh is a rectangular grid that allows T-junctions. T-splines, a
type of point-based splines defined over a T-mesh, were proposed by
T. W. Sederberg in \cite{Sed03,Sed04}, and have become an important
tool in geometric modeling and surface reconstruction. For
T-splines, T-mesh plays two roles: defining its parametric domain
decomposition fashion and representing the topological structure of
the control net of a T-spline surface.

According to its definition, a T-spline is a piecewise polynomial,
instead of a single polynomial, within a cell of the T-mesh. This is
incompatible with the standard defining fashion of classic splines.
Recall that, given a spline knot sequence, a univariate spline can
be defined, which is a single polynomial between any two neighboring
knots. Hence in \cite{Deng06}, some of the present authors with some
other coauthors introduced the concept of spline spaces over
T-meshes, where every function in the space is exactly a polynomial
within each cell of the T-mesh. A dimension formula can be proved
with the B-net method \cite{Deng06} and the smoothing cofactor
method \cite{huang} for the spline space
$\mathbf{S}(m,n,\alpha,\beta,\mathscr{T})$ as $m\geqslant 2\alpha+1$
and $n\geqslant 2\beta+1$. Then in \cite{Deng07} we provided an
approach to define the basis functions of the bicubic splines with
smoothness order one. The applications of the basis functions in
surface fitting is explored as well.

According the dimension formula in \cite{Deng06}, one can obtain the
specified dimension formulae for some spline spaces with low degree
as $\mathbf{S}(1,1,0,0,\mathscr{T})$,
$\mathbf{S}(2,2,0,0,\mathscr{T})$,
$\mathbf{S}(3,3,0,0,\mathscr{T})$, and
$\mathbf{S}(3,3,1,1,\mathscr{T})$. Furthermore, with similar
approaches in \cite{Deng07,Li}, one can construct their basis
functions with some ``good'' properties, say, compact support,
nonnegativity, and forming a partition of unity. In order to achieve
high order smoothness with as low as possible degree, we expect to
obtain the dimension formulae and basis functions construction for
the spline spaces $\mathbf{S}(m,n,m-1,n-1,\mathscr{T})$. Especially,
the most interesting ones are those for
$\mathbf{S}(2,2,1,1,\mathscr{T})$ and
$\mathbf{S}(3,3,2,2,\mathscr{T})$. In the paper, we only focus on
the dimension of the former space.

In the following, we first introduce two concepts: the spline space
with homogenous boundary conditions (HBC) and the extended T-meshes
associated with some spline space. As a foundation of the later
analysis, we discuss in detail the dimension formula and basis
functions construction for the space
$\mathbf{S}(1,1,0,0,\mathscr{T})$. In \cite{Deng06} we have shown
that the dimension of a bilinear spline space is the sum of the
numbers of crossing vertices and boundary vertices in the given
T-mesh. However, the proof proposed here shares the same fashion as
in the analysis of a lower bound of dimensions of biquadratic spline
spaces, and avoids the problem of recycling dependence of
T-vertices.

An important technique in the dimension analysis is linear space
embedding by an operator of mixed partial derivative, which embeds
the space $\mathbf{S}(m,n,m-1,n-1,\mathscr{T})$ into the space
$\mathbf{S}(m-1,n-1,m-2,n-2,\mathscr{T})$. A necessary and
sufficient condition for describing any element in
$\mathbf{S}(m-1,n-1,m-2,n-2,\mathscr{T})$ is the image of an element
in $\mathbf{S}(m,n,m-1,n-1,\mathscr{T})$ by the operator. In the
paper, we just discuss cases of $m=n=1$ or 2. With the method, a
lower bound of dimensions of $\mathbf{S}(2,2,1,1,\mathscr{T})$ is
proved. Finally, by making use of the level structure of
hierarchical T-meshes, a dimension formula is provided for
biquadratic spline spaces over hierarchical T-meshes.

The paper is organized as follows. In Section 2, the spline spaces
over T-meshes is reviewed and two concepts are proposed: extension
of T-meshes and spline spaces with homogeneous boundary conditions.
Then the dimension formula and basis function construction of
bilinear spline spaces of smoothness order zero over T-meshes are
discussed in detail in Section 3, and a dimension lower bound of
biquadratic spline spaces over general T-meshes is provided in
Section. In Section 5, using level structure of hierarchical
T-meshes, a dimension formula of biquadratic spline space over
hierarchical T-meshes are proved. A topological explantation of the
dimension formula is shown as well. Section 6 concludes the paper
with some discussions.

\section{T-meshes and Spline Spaces}

A {\bf T-mesh} is basically a rectangular grid that allows
T-junctions.  As for details of T-meshes, please refer to
\cite{Deng06}. Here the T-meshes we discuss are regular, i.e., the
domaines occupied by the T-meshes are rectangles. We adopt the
compatible definitions of vertex, edge, cell with those in
\cite{Deng06}.

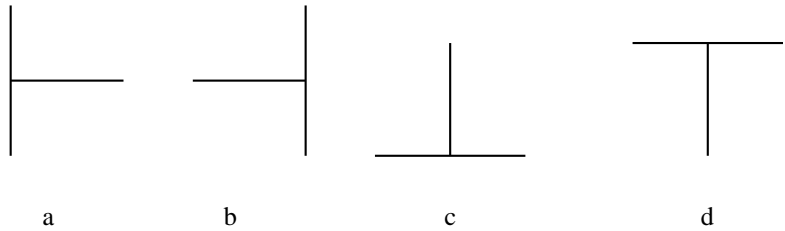
\begin{figure}[!ht]
\begin{center}
\begin{tabular}{cccc}
\begin{pspicture}(0,0)(2,3)
\psline(0.5,0.5)(0.5,2.5)\psline(0.5,1.5)(2,1.5)
\end{pspicture} &
\begin{pspicture}(0,0)(2,3)
\psline(2,0.5)(2,2.5)\psline(0.5,1.5)(2,1.5)
\end{pspicture} &
\begin{pspicture}(0,0)(3,2)
\psline(0.5,0.5)(2.5,0.5)\psline(1.5,0.5)(1.5,2)
\end{pspicture} &
\begin{pspicture}(0,0)(3,2)
\psline(0.5,2)(2.5,2)\psline(1.5,0.5)(1.5,2)
\end{pspicture}\\
a  & b & c & d
\end{tabular}
\caption{Horizontal T-vertices (a and b) and vertical T-vertices (c
and d) \label{TVertex}}
\end{center}
\end{figure}

T-vertices in a T-mesh can be classified into two types:
\textbf{horizontal T-vertices} and \textbf{vertical T-vertices}. The
T-vertices as shown in Figures \ref{TVertex}.a and b are horizontal
ones, and those as shown in Figures \ref{TVertex}.c and d are
vertical ones. In \cite{Deng06}, edges and c-edges are defined. Here
we introduce a new type of edges, l-edges. A
\textbf{horizontal/vertical l-edge} is a continuous line segment
which consists of some horizontal/vertical interior edges and whose
two endpoints are boundary vertices or horizontal/vertical
T-vertices. In other words, an l-edge is a longest possible line
segment in the mesh. The boundary of a regular T-mesh consists of
four l-edges, which are called boundary l-edges. The other l-edges
in the T-mesh are called interior l-edges. For example, in Figure
\ref{TExpand}, the given T-mesh $\mathscr{T}$ has three horizontal
interior l-edges and three vertical interior l-edges, respectively.

Given two series of real numbers $x_i$, $i=1,\ldots, m$, and $y_j$,
$j=1, \ldots, n$, where $x_i<x_{i+1}$ and $y_j < y_{j+1}$, a
rectangular grid can be formed with vertices $(x_i, y_j)$, $i=1,
\ldots, m$, $j=1, \ldots, n$. This grid is called a
\textbf{tensor-product mesh}, denoted by $(x_1, \ldots, x_m) \times
(y_1, \ldots, y_n)$, which is a special type of T-mesh. From a
regular T-mesh $\mathscr{T}$, a tensor-product mesh $\mathscr{T}^c$
can be constructed by extending all the interior l-edges to the
boundary. $\mathscr{T}^c$ is called the \textbf{associated
tensor-product mesh} with $\mathscr{T}$. See Figure \ref{TExpand}
for an example.
\begin{figure}[!htb]
\begin{center}
\psset{unit=0.75cm,linewidth=0.8pt}
\begin{tabular}{ccc}
\begin{pspicture}(0.5,0.5)(6,6)
\psline(1,1)(1,5)(5,5)(5,1)(1,1) \psline(1,2)(5,2)\psline(2,3)(4,3)
\psline(1,4)(5,4)\psline(2,1)(2,4)
\psline(3,2)(3,5)\psline(4,1)(4,5)
\end{pspicture} &
\begin{pspicture}(0.5,0.5)(6,6)
\psline(1,1)(1,5)(5,5)(5,1)(1,1) \psline(1,2)(5,2)\psline(1,3)(5,3)
\psline(1,4)(5,4)\psline(2,1)(2,5)
\psline(3,1)(3,5)\psline(4,1)(4,5)
\end{pspicture}&
\begin{pspicture}(0.5,0.5)(6,6)
\psline(0.6,0.6)(0.6,5.4)(5.4,5.4)(5.4,0.6)(0.6,0.6)
\psline(0.6,2)(5.4,2)\psline(2,3)(4,3)
\psline(0.6,4)(5.4,4)\psline(2,0.6)(2,4)
\psline(3,2)(3,5.4)\psline(4,0.6)(4,5.4)
\psline(0.6,0.8)(5.4,0.8)\psline(0.6,1)(5.4,1)
\psline(0.6,5)(5.4,5)\psline(0.6,5.2)(5.4,5.2)
\psline(0.8,0.6)(0.8,5.4)\psline(1,0.6)(1,5.4)
\psline(5,0.6)(5,5.4)\psline(5.2,0.6)(5.2,5.4)
\end{pspicture} \\
$\mathscr{T}$ &$\mathscr{T}^c$ & $\mathscr{T}^\varepsilon$
\end{tabular}
\caption{A T-mesh with its associated tensor-product mesh
$\mathscr{T}^c$ and its extension
$\mathscr{T}^{\varepsilon}$.\label{TExpand}}
\end{center}
\end{figure}
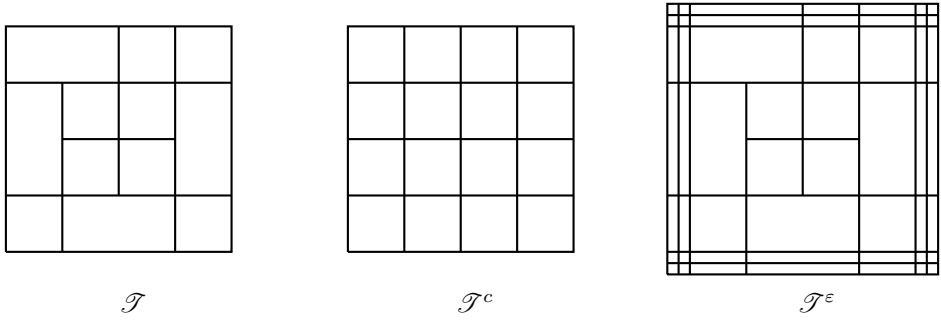

\subsection{Spline spaces over T-meshes}

Given a T-mesh $\mathscr{T}$, we use $\mathscr{F}$ to denote all the
cells in $\mathscr{T}$ and $\Omega$ to denote the region occupied by
all the cells in $\mathscr{T}$. In \cite{Deng06}, the following
spline space definition is proposed:
\begin{align}
\mathbf{S}(m,n,\alpha,\beta,\mathscr{T}):=\{f(x,y)\in
C^{\alpha,\beta}(\Omega): f(x,y)|_{\phi}\in \mathbb{P}_{mn},\forall
\phi\in\mathscr{F}\},
\end{align}
where $\mathbb{P}_{mn}$ is the space of all the polynomials with
bi-degree $(m,n)$, and $C^{\alpha,\beta}$ is the space consisting of
all the bivariate functions which are continuous in $\Omega$ with
order $\alpha$ along $x$ direction and with order $\beta$ along $y$
direction. It is obvious that
$\mathbf{S}(m,n,\alpha,\beta,\mathscr{T})$ is a linear space.

Now we introduce a new spline space over T-meshes with the following
defintion:
\begin{align}
\overline{\mathbf{S}}(m,n,\alpha,\beta,\mathscr{T}):=\{f(x,y)\in
C^{\alpha,\beta}(\mathds{R}^2): f(x,y)|_{\phi}\in
\mathbb{P}_{mn},\forall \phi\in\mathscr{F},\ \mathrm{ and }\
f|_{\mathds{R}^2\setminus\Omega}\equiv0\},
\end{align}
which is called a spline space over the given T-mesh $\mathscr{T}$
with \textbf{homogeneous boundary conditions (HBC)}.

\subsection{Extended T-meshes}

In order to discuss the dimension of the spline space
$\mathbf{S}(m,n,m-1,n-1,\mathscr{T})$, we extend the given regular
T-mesh $\mathscr{T}$ in the following fashion: Pickup a
tensor-product mesh $\mathscr{M}$ with $2(m+1)$ vertical lines and
$2(n+1)$ horizontal lines, such that the central rectangle of
$\mathscr{M}$ is the same size as $\Omega$, the region occupied by
$\mathscr{T}$; Then all the edges with an endpoint on the boundary
of $\mathscr{T}$ are extended to reach the boundary of
$\mathscr{M}$. The result mesh, denoted as
$\mathscr{T}^\varepsilon$, is called an extension of the original
T-mesh (or say, an extended T-mesh) associated with the present
spline space. Figure \ref{TExpand} provides an example, where the
T-mesh is extended associated with
$\mathbf{S}(2,2,1,1,\mathscr{T})$.

It should be noted that the extension is associated with a spline
space. Hence, associated with different spline spaces, one will
obtain different extended T-meshes. The following theorem shows
that, using an extension of T-mesh, the dimension analysis of
$\overline{\mathbf{S}}(2,2,1,1,\mathscr{T}^\varepsilon)$ is the same
as the dimension analysis of ${\mathbf{S}}(2,2,1,1,\mathscr{T})$.

\begin{theorem}
\label{thm2.1} Given a T-mesh $\mathscr{T}$, assume
$\mathscr{T}^{\varepsilon}$ is its extension associated with
$\mathbf{S}(2,2,1,1,\mathscr{T})$. Then
\begin{align} \label{eqn2.3}
\mathbf{S}(2,2,1,1,\mathscr{T})&=
                       \overline{\mathbf{S}}(2,2,1,1,\mathscr{T}^{\varepsilon})|_{\mathscr{T}},\\
\dim \mathbf{S}(2,2,1,1,\mathscr{T})&=
                       \dim \overline{\mathbf{S}}(2,2,1,1,\mathscr{T}^{\varepsilon}).
                       \label{eqn2.4}
\end{align}
\end{theorem}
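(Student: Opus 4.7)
My plan is to prove (\ref{eqn2.3}) and (\ref{eqn2.4}) simultaneously by showing that the restriction map
\[
R : \overline{\mathbf{S}}(2,2,1,1,\mathscr{T}^\varepsilon) \longrightarrow \mathbf{S}(2,2,1,1,\mathscr{T}), \qquad \tilde f \mapsto \tilde f|_\Omega,
\]
is a linear isomorphism. Well-definedness of $R$ is automatic: the extension construction adds no edges inside $\Omega$, so every cell of $\mathscr{T}^\varepsilon$ contained in $\Omega$ is a cell of $\mathscr{T}$, and $C^{1,1}$-smoothness on $\mathds{R}^2$ restricts to $C^{1,1}$-smoothness on $\Omega$. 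Both (\ref{eqn2.3}) and (\ref{eqn2.4}) then follow once $R$ is shown to be injective and surjective.

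For surjectivity I construct the extension $\tilde f$ of a given $f\in\mathbf{S}(2,2,1,1,\mathscr{T})$ hierarchically. Denote the vertical and horizontal lines of the underlying tensor-product mesh $\mathscr{M}$ by $v_1<\cdots<v_6$ and $h_1<\cdots<h_6$, so that $\Omega=[v_3,v_4]\times[h_3,h_4]$. First I fill in the horizontal bar $[v_1,v_6]\times[h_3,h_4]$ row by row: on each row of cells of $\mathscr{T}^\varepsilon$ inside this bar, the restriction to the left frame $[v_1,v_3]$ must be a $C^{1,1}$ biquadratic spline on the two cells meeting at $x=v_2$ which matches $f$ and $\partial_x f$ at $x=v_3$ and vanishes together with $\partial_x$ at $x=v_1$; the right frame is handled analogously. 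This is the classical $C^1$ piecewise-quadratic Hermite problem on three knots with value-and-derivative data at the two endpoints, whose $4$ continuity-reduced degrees of freedom exactly balance the $4$ boundary conditions; because all boundary data are polynomial of degree at most $2$ in the transverse variable, the solution is uniquely determined and yields a genuine biquadratic piece on each cell. In the second stage I repeat the same construction column by column in the $y$-direction on all of $[v_1,v_6]\times[h_1,h_6]$, using the bar values just produced as matching data, so that the top and bottom frames and all four corner regions are determined without further choice.

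Injectivity uses the same local solvability in reverse: if $\tilde f|_\Omega=0$, then $C^{1,1}$-matching across $\partial\Omega$ forces $\tilde f$ and the relevant normal derivative to vanish along the inner edge of every frame strip, HBC forces them to vanish along the outer edge, and the Hermite problem with zero data on both sides admits only the zero solution, so $\tilde f\equiv 0$ on the bar and then on every column. The main obstacle I expect is the local invertibility: I must check that the $4\times 4$ Hermite matrix on three knots is nonsingular for every knot configuration $\mathscr{M}$ can produce, and that the data inherited at the intermediate knot lines are of the correct bidegree so the construction really lands in $\overline{\mathbf{S}}(2,2,1,1,\mathscr{T}^\varepsilon)$ rather than in a larger piecewise-polynomial space. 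The first reduces to a brief calculation showing that the only $C^1$ piecewise-quadratic on $a<b<c$ with $p(a)=p'(a)=p(c)=p'(c)=0$ is $p\equiv 0$, and the second is essentially built in because the traces of $f$ and $\partial_x f$ at $x=v_3, v_4$ are already polynomials of degree $\le 2$ in $y$ on each cell.
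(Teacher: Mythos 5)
Your proposal is correct, and its skeleton is the same as the paper's: prove surjectivity of restriction by explicitly extending $f$ into the frame $\mathscr{T}^{\varepsilon}\setminus\mathscr{T}$, and prove injectivity (hence \eqref{eqn2.4}) from the fact that a nonzero univariate $C^1$ quadratic spline cannot live on only two intervals. The difference is in the mechanics of the extension. The paper works with B\'ezier ordinates cell by cell: the ordinates adjacent to $\Omega$ are read off from $f$, those adjacent to $\partial\mathscr{T}^{\varepsilon}$ are forced to zero by HBC, the intermediate ones are fixed by collinearity, and the problematic corner ordinate is resolved by an ad hoc device (placing it on the bilinear function determined by its eight neighbours) precisely to avoid circular determinations. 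Your two-pass sweep --- first a $C^1$ piecewise-quadratic Hermite solve in $x$ on each row of the side frames, then the same solve in $y$ to fill the top/bottom frames and the corners from the bar data --- sidesteps that corner issue structurally rather than by a special trick, and your computation that the homogeneous Hermite problem on $v_1<v_2<v_3$ with zero value and derivative at both ends has only the zero solution is exactly the elementary content behind the paper's appeal to the minimal support of quadratic splines (at least four breakpoints), so your injectivity argument is the same fact made self-contained. One point you should make explicit when writing this up: smoothness of your extension \emph{transverse} to each sweep, i.e.\ $C^1$ continuity across the horizontal lines subdividing a side frame (and across the vertical lines in the second pass). It does hold, because the Hermite solution operator has fixed knots, hence is the same linear map on every row, and the traces $f(v_3,\cdot)$, $\partial_x f(v_3,\cdot)$ (and in the second pass the bar traces together with their $y$-derivatives) are $C^1$ in the transverse variable since tangential derivatives of matching polynomial pieces agree along shared edges; but as stated your argument only records the biquadratic bidegree of the pieces, not this transverse matching, so add the sentence.
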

\begin{proof}
We first prove Equation \eqref{eqn2.3}. Since
$\mathscr{T}^{\varepsilon}$ is an extension of $\mathscr{T}$, it
follows that
$$\overline{\mathbf{S}}(2,2,1,1,\mathscr{T}^{\varepsilon})|_{\mathscr{T}}
                                           \subset\mathbf{S}(2,2,1,1,\mathscr{T}).$$
In the following we will prove that, for any $
f\in\mathbf{S}(2,2,1,1,\mathscr{T})$, there exists
$\bar{f}\in\overline{\mathbf{S}}(2,2,1,1,
\mathscr{T}^{\varepsilon})$ such that $\bar{f}|_{\mathscr{T}}=f$.

\begin{figure}[!htb]
\begin{center}
\setlength{\unitlength}{1cm}
\begin{pspicture}(0.5,0.5)(6.5,5)
\psline(1,1)(6.5,1)\psline(1,2)(6.5,2) \psline(1,3)(6.5,3)
\psline(1,4)(3,4) \psline(1,1)(1,4.5)\psline(2,1)(2,4.5)
\psline(3,1)(3,4.5)\psline(4,1)(4,3)\psline(5,1)(5,3)\psline(6,1)(6,3)
\psdots[dotstyle=triangle](1,1)(1,1.5)(1,2)(1,2.5)(1,3)(1.5,1)(1.5,1.5)(1.5,2)(1.5,2.5)(1.5,3)(2,1)(2,1.5)(2.5,1)
(2.5,1.5)(3,1)(3,1.5)(3.5,1)(3.5,1.5)(4,1)(4,1.5)(4.5,1)(4.5,1.5)(5,1)(5,1.5)(5.5,1)(5.5,1.5)(6,1)(6,1.5)
\psdots(2.5,2.5)(2.5,3)(3,2.5)(3,3)(3.5,2.5)(3.5,3)(4,2.5)(4,3)(4.5,2.5)(4.5,3)(5,2.5)(5,3)(5.5,2.5)(5.5,3)(6,2.5)(6,3)
\psdots[dotstyle=square](2,2.5)(2,3)(2.5,2)(3,2)(3.5,2)(4,2)(4.5,2)(5,2)(5.5,2)(6,2)
\psdots[dotstyle=square*](2,2) \rput(4.5,4){$\mathscr{T}$}
\end{pspicture}
\caption{B\'ezier ordinates in an extended T-mesh\label{Th1}}
\end{center}
\end{figure}
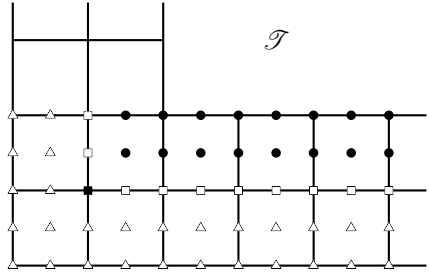

Here we define the function $\bar{f}$ over
$\mathscr{T}^{\varepsilon}$ by assigning its B\'ezier ordinates in
every cell of $\mathscr{T}^{\varepsilon}\backslash \mathscr{T}$.
Since $\bar{f} \in C^{1,1}$, and there does not exist any interior
T-vertices in $\mathscr{T}^{\varepsilon}\backslash \mathscr{T}$,
$\bar{f}$ shares the same ordinates along the common boundary
between two neighboring cells. (Therefore, these same ordinates
appears once in Figure \ref{Th1}, which illustrates the distribution
of the B\'ezier ordinates in the left-bottom part of the extended
region $\mathscr{T}^{\varepsilon}\backslash \mathscr{T}$.) According
to the specification of $f$ over $\mathscr{T}$, it follows that the
ordinates labeled with ``$\bullet$'' are determined. Since $\bar{f}$
meets the zero function along the boundary of the extended T-mesh
$\mathscr{T}^{\varepsilon}$ with order one, the ordinates labeled
with ``$\vartriangle$'' are determined as well. The rest ordinates
labeled with ``$\Box$'' can be determined by its corresponding
neighboring horizontal or vertical ordinates labeled with
``$\vartriangle$'' and ``$\bullet$''. Here the continuous condition
is the collinear of the corresponding three points. It should be
noted that the ordinates themselves determined in this fashion
ensures that they define a continuous spline automatically.

After determining the former ordinates, the ordinate labeled with
``{\tiny $\blacksquare$}'' can be determined as follows: It follows
that the eight ordinates neighboring the ordinate ``{\tiny
$\blacksquare$}'' define a bilinear function. When the ordinate
corresponding to ``{\tiny $\blacksquare$}'' also lies on the
bilinear function, the continuous of order one along two directions
can be guaranteed. In fact, when dealing with the determination of
all the ordinates in the extension region around $\mathscr{T}$, if
we meet cases of recycling determinations, we can address the
ordinates using this method of bilinear functions construction.

Hence we obtain the specification of the function $\bar{f}$ in
$\mathbf{S}(2,2,1,1,\mathscr{T}^{\varepsilon})$, which satisfies
$\bar{f}|_{\mathscr{T}}=f$. According to the former analysis, it
follows that
$$\mathbf{S}(2,2,1,1,\mathscr{T})=
\overline{\mathbf{S}}(2,2,1,1,\mathscr{T}^{\varepsilon})|_{\mathscr{T}}.$$

In order to prove Equation \eqref{eqn2.4}, we will prove that, for
any nonzero function
 $\bar{f}\in\overline{\mathbf{S}}(2,2,1,1,\mathscr{T}^{\varepsilon})$,
it follows that $\bar{f}|_{\mathscr{T}}\not \equiv 0$. Suppose
$\bar{f}|_{\mathscr{T}} \equiv 0$, i.e., there exists $p=(x_0,y_0)
\in \mathscr{T}^{\varepsilon}\backslash \mathscr{T}$ such that
$\bar{f}(x_0,y_0) \neq 0$. And assume $\Omega = (x_l, x_r) \times
(y_b,y_t)$. If $x_l \leqslant x_0 \leqslant x_r$, $y_0< y_b$, then
$\bar{f}(x_0,y)$, $y \leqslant y_b$ is a quadratic spline, whose
support has just three breakpoints. This contradicts with the fact
about the minimal support of quadratic splines (The support of a
nonzero quadratic spline should have at least four breakpoints).
Similarly, we can show that $p$ is outside of the regions $x_l
\leqslant x_0 \leqslant x_r$, $y_0> y_t$; $y_l \leqslant y_0
\leqslant y_r$, $x_0< x_l$ and $y_b \leqslant y_0 \leqslant y_t$,
$x_0> x_r$. Therefore, without loss of generality, we assume
$x_0<x_r$, $y_0<y_b$. But this also results in a nonzero quadratic
spline with three breakpoints in its support. Hence we obtain
$\bar{f}|_{\mathscr{T}}\not \equiv 0$, and has proved that
$$\dim\mathbf{S}(m,n,\alpha,\beta,\mathscr{T})=
\dim\overline{\mathbf{S}}(m,n,\alpha,\beta,\mathscr{T}^{\varepsilon}).$$
This completes the proof of the theorem.
\end{proof}

According to Theorem \ref{thm2.1}, we will consider only the
biquadratic spline spaces over T-meshes with HBC.

\noindent \textbf{Remark}: A similar analysis can show that, for any
$m$ and $n$, it follows that
\begin{align}
\mathbf{S}(m,n,m-1,n-1,\mathscr{T})&=
                       \overline{\mathbf{S}}(m,n,m-1,n-1,
                       \mathscr{T}^{\varepsilon})|_{\mathscr{T}},\\
\dim \mathbf{S}(m,n,m-1,n-1,\mathscr{T})&=
                       \dim
                       \overline{\mathbf{S}}(m,n,m-1,n-1,\mathscr{T}^{\varepsilon}),
\end{align}
where $\mathscr{T}^{\varepsilon}$ is an extension of $\mathscr{T}$
associated with $\mathbf{S}(m,n,m-1,n-1,\mathscr{T})$. Hence one can
replace the dimension discussion for
$\mathbf{S}(m,n,m-1,n-1,\mathscr{T})$ with that for
$\overline{\mathbf{S}}(m,n,m-1,n-1,\mathscr{T}^{\varepsilon})$.

\section{Dimensions and Basis Functions of $\overline{\mathbf{S}}(1,1,0,0,\mathscr{T})$}

In \cite{Deng06} we have proved the following dimension formula of
the spline space $\mathbf{S}(m,n,\alpha,\beta,\mathscr{T})$ over
T-mesh:
\begin{align}
\dim \mathbf{S}(m,n,\alpha,\beta,\mathscr{T})=F(m+1)(n+1)-E_h(m+1)(\beta+1)\nonumber\\
   -E_v(n+1)(\alpha+1)+V(\alpha+1)(\beta+1),
\end{align}
where $m\geqslant 2\alpha+1$, $n\geqslant 2\beta+1$, $F$ is the
number of all the cells in $\mathscr{T}$, $E_h$ and $E_v$ the
numbers of horizontal and vertical interior edges, respectively, and
$V$ the number of interior vertices (including crossing and
T-vertices). Specially, as $m=n=1$ and $\alpha=\beta=0$, it follows
that
\begin{align}
\dim \mathbf{S}(1,1,0,0,\mathscr{T})=V^+ + V^b,
\end{align}
where $V^+$ is the number of crossing vertices, and $V^b$ the number
of boundary vertices. Using an extension of the T-meshes associated
with $\mathbf{S}(1,1,0,0,\mathscr{T})$, it follows that
\begin{align}
\dim \overline{\mathbf{S}}(1,1,0,0,\mathscr{T}^{\varepsilon}) = \dim
\mathbf{S}(1,1,0,0,\mathscr{T}) =V^+_{\varepsilon},
\end{align}
where $V^+_{\varepsilon}$ is the number of the crossing vertices in
the extended T-mesh $\mathscr{T}^{\varepsilon}$, which equals the
sum of the numbers of crossing vertices and boundary vertices in
$\mathscr{T}$.

Now we will prove that the former formula holds for a general
regular T-mesh (not just an extended T-mesh). The method taken in
the proof is similar with the method proposed in the next section
for proving the dimension properties of
$\overline{\mathbf{S}}(2,2,1,1,\mathscr{T})$. This method solves the
problem of recycling dependence of T-vertices which happens in the
proof proposed in \cite{Deng06}.

Now we introduce some notation and lemmas for the proof. In the
given T-mesh $\mathscr{T}$, let $E$ denote the number of interior
l-edges, and $V^+$ the number of crossing vertices. Define
$\overline{\mathbf{S}}(0,0,-1,-1,\mathscr{T})$ to be the space
consisting of functions which are a constant in each cell of the
T-mesh $\mathscr{T}$, and have no smoothness requirement between
neighboring cells. It is obvious that its dimension is the number of
all the cells in the T-mesh.

\subsection{Operator of mixed partial derivative}

The operator of mixed partial derivative is introduced as follows:
\begin{equation}\label{mpd} \mathcal{D}:=\frac{\partial ^2}{\partial
x
\partial y}: \overline{\mathbf{S}}(m,n,m-1,n-1,\mathscr{T}) \to
\overline{\mathbf{S}}(m-1,n-1,m-2,n-2,\mathscr{T}),\end{equation}
where $m,n\geqslant 1$. Since the function satisfies HBC, the
operator is a one-to-one, but not onto mapping. For example, for any
nonzero $g \in \overline{\mathbf{S}}(m,n,m-1,n-1,\mathscr{T})$, it
follows that $\mathcal{D}(g)$ must reach positive values in some
parts and negative values in some other parts. Hence a nonnegative
function in $\overline{\mathbf{S}}(m-1,n-1,m-2,n-2,\mathscr{T})$ has
no pre-image under the operator $\mathcal{D}$.

Define \begin{equation}\label{ig} \mathcal{I}(g)(x,y) =
\int_{-\infty}^x \int_{-\infty}^y g(s,t)dsdt.
\end{equation}
It follows that, for any $f \in
\overline{\mathbf{S}}(m,n,m-1,n-1,\mathscr{T})$, $\mathcal{I}\circ
\mathcal{D}(f) = f$. Hence $\mathcal{I}$ can be considered to be the
inverse operator of $\mathcal{D}$.

It follows that, for any $g \in
\overline{\mathbf{S}}(m-1,n-1,m-2,n-2,\mathscr{T})$, $
\mathcal{I}(g) $ is a piecewise polynomial of degree $(m,n)$ with
smoothness $C^{m-1,n-1}$. With respect to functions in
$\overline{\mathbf{S}}(m,n,m-1,n-1,\mathscr{T})$,
$\partial^{m}\mathcal{I}(g)/\partial x^m$ or
$\partial^{n}\mathcal{I}(g)/\partial y^n$ may be discontinuous
inside some cells of $\mathcal{T}$. But the discontinuity must
happen on the extension of some l-edges of $\mathcal{T}$. Let
$$ d_{m,n} = \dim \overline{\mathbf{S}}(m,n,m-1,n-1,\mathscr{T}).$$
If we have known $d_{m-1,n-1}$, i.e., the dimension of
$\overline{\mathbf{S}}(m-1,n-1,m-2,n-2,\mathscr{T})$, and the number
$r_{m-1,n-1}$ of linear-independent constraints ensuring
$\mathcal{I}(g) \in \overline{\mathbf{S}}(m,n,m-1,n-1,\mathscr{T})$
for any $g \in \overline{\mathbf{S}}(m-1,n-1,m-2,n-2,\mathscr{T})$,
then it follows that \begin{equation}\label{df} d_{m,n} =
d_{m-1,n-1} -r_{m-1,n-1}. \end{equation}
On the other hand, if there
are $r'_{m-1,n-1}$ constraints, which may be linear dependent,
proposed for ensuring $\mathcal{I}(g) \in
\overline{\mathbf{S}}(m,n,m-1,n-1,\mathscr{T})$ for any $g \in
\overline{\mathbf{S}}(m-1,n-1,m-2,n-2,\mathscr{T})$, then it follows
that \begin{equation} \label{lb} d_{m,n} \geqslant d_{m-1,n-1} -
r'_{m-1,n-1}.
\end{equation}
Equations \eqref{df} and \eqref{lb} are used to prove the dimension
formulae of bilinear and biquadratic spline spaces over T-meshes in
the rest of the paper.

\subsection{Some lemmas}
As $m=n=1$, the following lemma proposes the constraints ensuring
$\mathcal{I}(g) \in \overline{\mathbf{S}}(1,1,0,0,\mathscr{T})$ for
any $ g \in \overline{\mathbf{S}}(0, 0, -1, -1, \mathscr{T})$.

\begin{lemma}
\label{lemm3.1} Given a regular T-mesh $\mathscr{T}$, let $g \in
\overline{\mathbf{S}}(0,0,-1,-1,\mathscr{T})$. Then
$$\mathcal{I}(g)(x,y)
\in\overline{\mathbf{S}}(1,1,0,0,\mathscr{T})$$ if and only if the
following two sets of conditions are satisfied simultaneously:
\begin{enumerate}
\item For any horizontal l-edge $l^h$,
\begin{equation} \label{int11} \int_{x_0}^{x_1} g(s, y_0-) ds = \int_{x_0}^{x_1} g(s, y_0+)
ds,\end{equation} where the two end-points of $l^h$ are with
coordinates $(x_0,y_0)$ and $(x_1, y_0)$;
\item For any vertical l-edge $l^v$,
\begin{equation} \label{int22}\int_{y_0}^{y_1} g(x_0-, t) dt = \int_{y_0}^{y_1} g(x_0+, t)
dt,\end{equation} where the two end-points of $l^v$ are with
coordinates $(x_0,y_0)$ and $(x_0, y_1)$.
\end{enumerate}
\end{lemma}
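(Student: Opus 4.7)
The plan is to reduce $\mathcal{I}(g) \in \overline{\mathbf{S}}(1,1,0,0,\mathscr{T})$ to three conditions on $f := \mathcal{I}(g)$: (i) $f$ is bilinear on every cell of $\mathscr{T}$, (ii) $f$ is continuous on $\mathbb{R}^2$, and (iii) $f \equiv 0$ outside $\Omega$. Condition (ii) is free because $\mathcal{I}$ sends any bounded integrand to a continuous iterated integral, and the mixed partial satisfies $\partial^2_{xy} f = g$, which is already piecewise constant. Hence only $\partial^2_{xx} f = \partial^2_{yy} f = 0$ on each cell, together with the HBC (iii), have to be translated into the l-edge conditions (1) and (2).

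For bilinearity on a fixed cell $\phi = (a,b) \times (c,d)$, I plan to use the decomposition
\[
f(x,y) = f(a,y) + \int_a^x\!\!\int_{-\infty}^c g(s,t)\,dt\,ds + c_\phi (x-a)(y-c), \qquad (x,y)\in\phi,
\]
where the first summand depends only on $y$ and the third is bilinear. The middle summand is linear in $x$ iff $s \mapsto \int_{-\infty}^c g(s,t)\,dt$ is constant on $(a,b)$. This piecewise-constant function jumps exactly at the $x$-coordinates of vertical l-edges $l^v$ lying entirely below $\phi$, and each jump equals $\int_{y_0}^{y_1}[g(x_0+,t)-g(x_0-,t)]\,dt$ --- the quantity in condition (2). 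Sufficiency of (2) is then immediate; for necessity I would isolate an individual l-edge by taking $\phi$ to be a cell whose bottom sits just above the top T-vertex of that l-edge, and telescoping when several l-edges share an $x$-coordinate. The analogous decomposition in $y$ produces condition (1) from $\partial^2_{yy} f = 0$.

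For HBC in the strip $x > x_r$, I would use that $f(x,y) = \int_{-\infty}^{\min(y,y_t)} J(t)\,dt$ with $J(t) = \int_{x_l}^{x_r} g(s,t)\,ds$. The piecewise-constant function $J$ has, at any height $y_0$, a jump equal to the sum of the left-minus-right-hand sides of condition (1) across all horizontal l-edges at $y_0$; so condition (1) forces $J \equiv 0$ and hence $f \equiv 0$ in the strip. The symmetric analysis on the top strip $y > y_t$ uses condition (2), while the left and bottom strips are automatic because $g$ vanishes there. Conversely, HBC combined with the cell-based conditions yields every individual l-edge condition, including for l-edges touching $\partial\Omega$, via the same telescoping.

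The main obstacle I anticipate is passing from the ``summed'' jump conditions that a single cell (or a single boundary strip) produces directly to the \emph{individual} condition (1) or (2) on each l-edge stated in the lemma. Maximality in the definition of an l-edge is what localizes each jump cleanly to one l-edge, and the careful cell-by-cell sweep needed to decouple l-edges that share a height or $x$-coordinate --- together with invoking HBC for l-edges terminating on $\partial\Omega$ --- is where the bookkeeping becomes delicate.
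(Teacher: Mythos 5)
Your proposal is correct, and its skeleton coincides with the paper's: both directions reduce to the observation that a failure of bilinearity of $\mathcal{I}(g)$ inside a cell is exactly a jump of a one-variable piecewise-constant function of the form $x\mapsto\int_{-\infty}^{c}g(x,t)\,dt$ or $y\mapsto\int_{-\infty}^{a}g(s,y)\,ds$, that such a jump is the sum of the l-edge integrals in conditions (1)--(2) over the collinear l-edges on one side, and that maximality of l-edges plus a telescoping sweep along each line localizes the summed condition to individual l-edges --- precisely the paper's induction ``from left to right'' along the extension of an l-edge. The differences are organizational. For necessity the paper matches $\partial_y\mathcal{I}(g)$ across the endpoint $(x_1,y_0)$ using linearity of $\mathcal{I}(g)$ along the transversal edge $e^v$, while you read the same fact off bilinearity in the cell lying beyond that endpoint; these are the same computation. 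For sufficiency the paper runs a minimal-bad-cell contradiction argument (choosing the cell with lowest, then leftmost, corner), whereas your explicit decomposition $f(x,y)=f(a,y)+\int_a^x\bigl(\int_{-\infty}^{c}g(s,t)\,dt\bigr)ds+g_\phi(x-a)(y-c)$ makes the cell-by-cell verification direct and dispenses with the extremal choice --- a cleaner route to the same end. The one genuine addition in your plan is the explicit verification of the homogeneous boundary condition via the row sums $J$ (and the column analogue) on the right and top strips: the paper's written proof of this lemma never checks that $\mathcal{I}(g)$ vanishes outside $\Omega$ (it does so only in the analogous biquadratic Lemma 4.1), so your treatment closes a point the paper leaves implicit in the constraints attached to the boundary l-edges. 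When writing it up, just state the jump at a given abscissa as the sum over \emph{all} vertical l-edges at that abscissa lying below the cell (you acknowledge this later), since several l-edges can share an $x$-coordinate, and note that the topmost such l-edge contributes its full integral because its upper endpoint lies on the cell's bottom edge.
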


\begin{proof}
For any horizontal l-edge $l^h$, its right end-point $(x_1,y_0)$ is
a T-vertex or a boundary vertex. Suppose the edge through the vertex
$(x_1,y_0)$ and perpendicular with $l^h$ is $e^v$ (as shown in
Figure \ref{HL1}). If $\mathcal{I}(g)(x,y) \in
\overline{\mathbf{S}}(1,1,0,0,\mathscr{T})$, then
$\mathcal{I}(g)|_{e^v}$ is a linear polynomial in a neighborhood of
$(x_1, y_0)$. Hence it follows that
\begin{figure}[!htb]
\begin{center}
\setlength{\unitlength}{1cm}
\begin{pspicture}(1,1)(8,3)
\psline(1,2)(6.4,2)\psline[linestyle=dotted](6.4,2)(6.9,2)\psline(6.9,2)(7,2)\psline(1,1)(1,3)
\psline(7,1)(7,3) \psline(2,2)(2,3)\psline(4,1)(4,3)
\psline(5.5,1)(5.5,2) \rput[l](7.1,2){$(x_{1},y_0)$}
\rput(1.5,2.5){$\alpha_{h_1}^u$}\rput(3,2.5){$\alpha_{h_2}^u$}\rput(5,2.5){$\alpha_{h_3}^u$}
\rput(2.5,1.5){$\alpha_{h_1}^d$}\rput(4.5,1.5){$\alpha_{h_2}^d$}\rput(6,1.5){$\alpha_{h_3}^d$}
\rput(6.7,1.5){$\cdots$}\rput(6.7,2.5){$\cdots$}\rput(7.2,3){$e^v$}
\rput(0.5,2){$l^h$}
\end{pspicture}
\caption{A horizontal l-edge (1)\label{HL1}}
\end{center}
\end{figure}
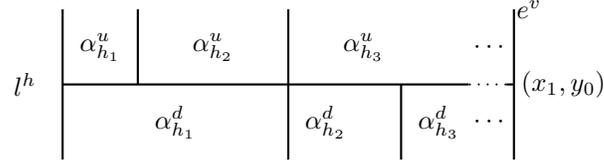
\begin{equation} \label{eqn3.71}
\frac{\partial}{\partial y} \mathcal{I}(g)(x_1, y_0-) =
\frac{\partial}{\partial y} \mathcal{I}(g)(x_1, y_0+).\end{equation}
According to the definition of $\mathcal{I}$ in Equation \eqref{ig},
Equation \eqref{eqn3.71} can be rewritten into
\begin{equation} \label{int1} \int_{-\infty}^{x_1} g(s,y_0-) ds =
\int_{-\infty}^{x_1} g(s,y_0+) ds.\end{equation} Extend $l^h$ to
reach the left boundary of the T-mesh. If there are not other
horizontal l-edges on the extension edge, then, in every cell that
intersects the extension edge, $g$ is a pure linear polynomial. Then
$g(s,y_0-) = g(s,y_0+)$ as $s \leqslant x_0$, and
\begin{equation} \label{int2} \int_{-\infty}^{x_0} g(s,y_0-) ds =
\int_{-\infty}^{x_0} g(s,y_0+) ds. \end{equation}  According to
Equations \eqref{int1} and \eqref{int2}, it follows that the
equation \eqref{int11} in the first condition holds for any
horizontal l-edge.

It there exist some other horizontal l-edges on the extension edge,
then we run through these horizontal l-edges from left to right. For
the first one, we can prove that Equation \eqref{int11} holds with
the former approach. Then we consider the other l-edges one by one,
and prove that the corresponding equation \eqref{int2} holds as
well, and then Equation \eqref{int11} holds. Hence we finish to
prove Equation \eqref{int11} holds for every horizontal l-edge.

For any vertical l-edge, we can prove that the corresponding
equation \eqref{int22} holds in a similar way. Hence the necessity
of the lemma is proved.

Now we will prove the sufficiency of the lemma.
%For any $g \in
%\overline{\mathbf{S}}(0,0,-1,-1,\mathscr{T})$, $\mathcal{I}(g) \in
%C^{0,0}$ is a piecewise bilinear function, whose discontinuous of
%the first-order partial derivatives may not lie on the edges of
%$\mathscr{T}$. But these breakpoints must lie on the extensions of
%some l-edges of $\mathscr{T}$.
%
Suppose $g \in \overline{\mathbf{S}}(0,0,-1,-1,\mathscr{T})$
satisfies the two sets of conditions in the lemma, but
$\mathcal{I}(g) \notin \overline{\mathbf{S}}(1,1,0,0,\mathscr{T})$,
i.e., there exists a cell where $\partial \mathcal{I}(g)/\partial x$
or $\partial \mathcal{I}(g)/\partial y$ has at least a discontinuous
point. Define two sets consisting of the cells of $\mathscr{T}$ as
follows. Let $\mathcal{B}_x$ denote all the cells where $\partial
\mathcal{I}(g)/\partial x$ has at least a discontinuous point; Let
$\mathcal{B}_y$ denote all the cells where $\partial
\mathcal{I}(g)/\partial y$ has at least a discontinuous point. Then
according to the assumption, $\mathcal{B}_x \cup \mathcal{B}_y$ is
not empty. Without loss of generality, we assume $\mathcal{B}_y$ is
not empty. Consider a cell in $\mathcal{B}_y$ whose left-bottom
corner has the minimal $y$ coordinate in $\mathcal{B}_y$. If there
exist more than one such cell, select one from them such that its
left-bottom corner has the minimal $x$ coordinate. Then now we have
selected a unique cell $c$. In $c$, $\partial
\mathcal{I}(g)/\partial y$ has at least one discontinuous point
$(\bar{x},\bar{y})$. Hence it follows that
\begin{equation}\label{eqnn} \int_{-\infty}^{\bar{x}}
g(s,\bar{y}-)ds \neq \int_{-\infty}^{\bar{x}} g(s,\bar{y}+)ds.
\end{equation}
Consider the horizontal straight line through $(\bar{x},\bar{y})$.
On the straight line if there does not exist an l-edge of
$\mathscr{T}$ on the left side of $(\bar{x},\bar{y})$, then Equation
\eqref{eqnn} fails to hold. If there exists at least one l-edge on
the left side of $(\bar{x},\bar{y})$, then according to the first
condition, a contradiction arises as well. Hence $\mathcal{B}_y$ is
empty. In the same fashion, it follows that $\mathcal{B}_x$ is empty
as well. This contradicts with $\mathcal{B}_x \cup \mathcal{B}_y$
non-empty. Therefore we finish the proof of the sufficiency.
\end{proof}

There are $E+4$ conditions in Lemma \ref{lemm3.1}. The following
lemma states that these $E+4$ conditions are equivalent with $E+2$
conditions in another form.

\begin{lemma}
\label{lemm3.2} Given a regular T-mesh $\mathscr{T}$, the occupied
rectangle by $\mathscr{T}$ is $(x_l,x_r) \times (y_b, y_t)$, where
the different $y$ coordinates of horizontal l-edges are $y_0 < y_1 <
\ldots < y_n$. Suppose $g \in
\overline{\mathbf{S}}(0,0,-1,-1,\mathscr{T})$. Then
\begin{equation}
\label{eqn3.4} \int_{x_l}^{x_r} g(s, y_i-) ds = \int_{x_l}^{x_r}
g(s, y_i+) ds, \quad i=0,1,\ldots,n \end{equation} is equivalent
with \begin{equation} \label{eqn3.5} \int_{x_l}^{x_r} g(s,y)ds = 0,
\quad y \in (y_i, y_{i+1}), \quad i = 0, \ldots, n-1.
\end{equation}
It should be noted that, since $g$ is a piecewise constant, $
\int_{x_l}^{x_r} g(s,y)ds$, $ y \in (y_i, y_{i+1})$, is a constant
independent on $y$.

A similar statement can be made for vertical l-edges.
\end{lemma}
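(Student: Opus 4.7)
The plan is to reduce the equivalence to a statement about a one-dimensional piecewise constant function. Define the horizontal marginal
$h(y) := \int_{x_l}^{x_r} g(s,y)\,ds$. Since $g$ is piecewise constant on the cells of $\mathscr{T}$, and every horizontal edge of every cell must lie on some horizontal l-edge of $\mathscr{T}$ (hence at one of the levels $y_0,\ldots,y_n$), the function $h$ is piecewise constant on $\mathds{R}$ with jump set contained in $\{y_0,\ldots,y_n\}$. Denote by $c_i$ the common value of $h$ on the open strip $(y_i,y_{i+1})$ for $i=0,\ldots,n-1$. The HBC property built into $\overline{\mathbf{S}}(0,0,-1,-1,\mathscr{T})$ forces $g$ to vanish outside $\Omega=(x_l,x_r)\times(y_0,y_n)$, so $h(y_0-)=0$ and $h(y_n+)=0$.

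The next step is to translate \eqref{eqn3.4} and \eqref{eqn3.5} into conditions on $c_0,\ldots,c_{n-1}$. Equation \eqref{eqn3.4} at $i=0$ reads $0=c_0$; at $i=n$ it reads $c_{n-1}=0$; and for $1\leqslant i\leqslant n-1$ it reads $c_{i-1}=c_i$. Equation \eqref{eqn3.5} simply asserts $c_0=c_1=\cdots=c_{n-1}=0$. Hence the implication \eqref{eqn3.5} $\Rightarrow$ \eqref{eqn3.4} is immediate, since every equality in \eqref{eqn3.4} reduces to $0=0$. Conversely, given \eqref{eqn3.4}, one chains the equalities $c_0=c_1,\ c_1=c_2,\ \ldots,\ c_{n-2}=c_{n-1}$ starting from the base case $c_0=0$, which propagates the value $0$ through all strips and yields \eqref{eqn3.5}. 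The endpoint condition $c_{n-1}=0$ coming from $i=n$ is then redundant but consistent; this redundancy is precisely what explains the reduction from $n+1$ equations in \eqref{eqn3.4} to $n$ equations in \eqref{eqn3.5}, and globally from $E+4$ to $E+2$ when one couples the horizontal argument with its symmetric counterpart for vertical l-edges.

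No substantive obstacle is anticipated: the only point that really needs checking is the piecewise constant structure of $h$ with jump set contained in $\{y_0,\ldots,y_n\}$, and this follows directly from the definition of an l-edge (every horizontal cell boundary is part of a horizontal l-edge, so it contributes a possible jump of $h$ only at some $y_i$). Once this is in hand, the verification is purely algebraic, and the statement for vertical l-edges follows by interchanging the roles of $x$ and $y$ throughout.
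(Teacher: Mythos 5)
Your proposal is correct and follows essentially the same route as the paper: both use the vanishing of $g$ outside $\Omega$ to get the base case at $y_0$ and then propagate the vanishing of the strip integrals $\int_{x_l}^{x_r} g(s,y)\,ds$ level by level, the converse direction being immediate. Your explicit bookkeeping with the strip values $c_i$ is just a cleaner phrasing of the paper's recursive argument (including the observation, already noted in the lemma statement, that these strip integrals are constant on each $(y_i,y_{i+1})$), so no substantive difference.
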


\begin{proof}
Since $g$ is a piecewise constant in $\mathscr{T}$, and zero out of
$\mathscr{T}$, it follows that
\begin{align*} &
\int_{x_l}^{x_r} g(s, y_0-)ds
= \int_{x_l}^{x_r} g(s,y) ds = 0, \quad y < y_0, \\
& \int_{x_l}^{x_r} g(s, y_0+)ds = \int_{x_l}^{x_r} g(s,y) ds, \quad
y_0< y < y_1.
\end{align*}
Hence according to Equation \eqref{eqn3.4} as $i=0$, we have
$$\int_{x_l}^{x_r} g(s,y)ds = 0,
\quad y \in (y_0, y_{1}).$$ This means that Equation \eqref{eqn3.5}
holds as $i=0$. Then we consider cases as $i=1,2, \ldots, n-1$.
Similar discussions guarantee Equation \eqref{eqn3.5} holds as
$i=1,2, \ldots, n-1$.

On the other hand, for any $i=0,1,2, \ldots, n-1$, if all the
quations in \eqref{eqn3.5} hold, it is easy to prove that all the
equations in \eqref{eqn3.4} hold.
\end{proof}

Recall that, for a given T-mesh $\mathscr{T}$, an associated tensor
product mesh $\mathscr{T}^c$ can be obtained by extending all the
interior l-edges to the boundary of $\mathscr{T}$. Suppose there are
$E'$ interior l-edges in $\mathscr{T}^c$ and $E$ interior l-edges in
$\mathscr{T}$, where $E' \leqslant E$ (since it is possible that
more than one l-edges in $\mathscr{T}$ on the same l-edge in
$\mathscr{T}^c$). Select any horizontal l-edge $l$ from
$\mathscr{T}^c$, and suppose the horizontal l-edges $l_1, \ldots,
l_k$ in $\mathscr{T}$ lie on $l$. Then it follows that the
constraints
$$ \int_{l_i} g(s, \bar{y}-)ds
= \int_{l_i} g(s, \bar{y}+)ds, \quad i=1, \ldots, k$$ is equivalent
with the constraints
$$ \int_{l_i} g(s, \bar{y}-)ds = \int_{l_i} g(s, \bar{y}+)ds, \quad
i=1,\ldots, k-1, \quad  \int_{l} g(s, \bar{y}-)ds = \int_{l} g(s,
\bar{y}+)ds, $$ where $\bar{y}$ is the vertical coordinate of $l$.
According to Lemma \ref{lemm3.2}, all the following constraints,
with respect to any horizontal l-edge $l$ in $\mathscr{T}^c$,
$$\int_{l} g(s, \bar{y}-)ds = \int_{l} g(s,
\bar{y}+)ds$$ are equivalent with the integrations of $g$ along all
the span formed by two neighboring horizontal l-edges in
$\mathscr{T}^c$ are zero. A similar equivalence can be made for
vertical l-edges. Here we should noted that, for any boundary
l-edge, there is only one constraint stated in Lemma \ref{lemm3.1}.
Hence the necessary and sufficient conditions that $\mathcal{I}(g)
\in \overline{\mathbf{S}}(1,1,0,0,\mathscr{T})$ for $g \in
\overline{\mathbf{S}}(0,0,-1,-1,\mathscr{T})$ are $E+2$ constraints
in this fashion. On the other hand, if $\mathscr{T}^c =
(x_0,x_1,\ldots, x_m) \times (y_0, y_1, \ldots, y_n)$, it follows
that
$$ \int_{-\infty}^{+\infty}\int_{-\infty}^{+\infty} g(s,t)dsdt =
\sum_{i=0}^{m-1}(x_{i+1}-x_i) C_i = \sum_{j=0}^{n-1}(y_{j+1}-y_j)
D_j,$$ where
\begin{align*}
C_i & = \int_{y_0}^{y_n} g(x,y)dy, \quad x \in (x_i, x_{i+1}), \\
D_j & = \int_{x_0}^{x_m} g(x,y)dx, \quad y \in (y_j, y_{j+1}).
\end{align*}
Hence these $E+2$ constraints is with defective rank at least one.
The latter dimension theorem \ref{thm3.4} will show that the
defective rank is exactly one.

Up to now, we have finished the description of the constraints that
$\mathcal{I}(g) \in \overline{\mathbf{S}}(1,1,0,0,\mathscr{T})$.
Before we state the dimension theorem, a topological equation of
T-meshes is proposed in the following lemma.
\begin{lemma}
\label{lemm3.3} Given a regular T-mesh $\mathscr{T}$, suppose
$\mathscr{T}$ has $F$ cells, $V^+$ crossing vertices, and $E$
interior l-edges. Then
$$F=V^+ +E+1.$$
\end{lemma}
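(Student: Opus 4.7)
The identity $F = V^+ + E + 1$ is purely combinatorial/topological, so the plan is to reduce it to Euler's formula for the planar subdivision determined by $\mathscr{T}$. Write $V_{\text{tot}} = V^+ + V^T + V^b$ for the total vertex count (interior crossings, interior T-vertices, and boundary vertices), $E_{\text{tot}}$ for the total number of basic edges between consecutive vertices, and note that the number of faces is $F + 1$ (the $F$ cells plus the unbounded outer face). Euler's formula reads $V_{\text{tot}} - E_{\text{tot}} + (F+1) = 2$, so $F = E_{\text{tot}} - V_{\text{tot}} + 1$, and the task reduces to showing $E_{\text{tot}} - V_{\text{tot}} = V^+ + E$.

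To evaluate $E_{\text{tot}}$, I would count l-edge by l-edge: any l-edge $\ell$ carrying $n_\ell$ vertices in its relative interior contributes $n_\ell + 1$ basic edges. By the definition of l-edge, every interior vertex of $\ell$ is either (i) a crossing vertex, where a perpendicular l-edge passes through, or (ii) a T-vertex, where a perpendicular l-edge terminates on $\ell$. Consequently each crossing vertex lies in the relative interior of exactly two l-edges (one horizontal and one vertical), while each T-vertex lies in the interior of exactly one l-edge (the perpendicular one it sits on) and is an endpoint of a different l-edge. Summing over the $E$ interior l-edges gives $\sum_\ell n_\ell = 2V^+ + V^T$ and hence $2V^+ + V^T + E$ basic edges; the analogous sum over the four boundary l-edges contributes $V^b$ basic edges, since their interiors carry exactly the $V^b - 4$ non-corner boundary vertices and $(V^b - 4) + 4 = V^b$. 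Combining and subtracting $V_{\text{tot}}$ leaves $(2V^+ + V^T + V^b + E) - (V^+ + V^T + V^b) = V^+ + E$, as required.

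The main delicate step is the incidence count that classifies every vertex interior to an l-edge as either a crossing (contributing $2$) or a T-vertex (contributing $1$); this rests on the maximal-segment definition of l-edges together with the stipulation that l-edge endpoints must be boundary vertices or T-vertices of the matching direction. An alternative route more aligned with the paper's inductive style is induction on $E$: in the base case $E = 0$ the mesh is the boundary rectangle, so $1 = 0 + 0 + 1$; for the inductive step one first argues that the ``endpoint-lies-on'' dependency between l-edges is acyclic (a 2-cycle of perpendicular l-edges each terminating in the other's interior would force their shared endpoint to be simultaneously a horizontal and a vertical T-vertex, impossible unless it lies on the boundary), picks a linear extension, and verifies that inserting an l-edge which crosses $k$ existing perpendicular l-edges yields $\Delta E = 1$, $\Delta V^+ = k$, and $\Delta F = k + 1$, preserving $F - V^+ - E = 1$.
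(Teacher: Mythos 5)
Your main argument is correct, and it takes a genuinely different route from the paper. The paper avoids Euler's formula entirely: it counts cell--vertex incidences (each of the $F$ cells has four corners, and running over all cells meets each crossing vertex four times, each interior T-vertex and each non-corner boundary vertex twice, and each corner once, giving $4F = 4V^+ + 2V^T + 2V^{bT} + 4$), and then pairs the $2E$ endpoints of interior l-edges with the interior T-vertices and non-corner boundary vertices, $V^T + V^{bT} = 2E$, from which $F = V^+ + E + 1$ follows at once. You instead compute $E_{\mathrm{tot}}$ and $V_{\mathrm{tot}}$ via the l-edge decomposition and feed them into $V_{\mathrm{tot}} - E_{\mathrm{tot}} + (F+1) = 2$; your incidence bookkeeping (a crossing vertex interior to exactly two l-edges, an interior T-vertex interior to exactly one, boundary l-edges contributing $V^b$ basic edges) is sound. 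The paper's count buys slightly more elementarity (no appeal to Euler's formula or to connectivity of the mesh graph, which your route tacitly needs), while yours makes the topological content explicit; both rest on the same regularity convention that every interior T-vertex and every non-corner boundary vertex is the endpoint of exactly one interior l-edge.

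The alternative induction you sketch at the end, however, has a genuine gap: the ``endpoint-lies-on'' dependency between l-edges is not acyclic, and ruling out 2-cycles does not rule out longer cycles. A pinwheel of four l-edges, each terminating in the relative interior of the next --- for instance, inside $[0,4]^2$, take $h_1$ at $y=1$ with $x\in[0,3]$, $v_1$ at $x=3$ with $y\in[0,3]$, $h_2$ at $y=3$ with $x\in[1,4]$, and $v_2$ at $x=1$ with $y\in[1,4]$ --- is a valid T-mesh (here $F=5$, $V^+=0$, $E=4$, so the formula still holds) in which the dependency graph is a 4-cycle and no single l-edge can be deleted or inserted last while keeping a valid T-mesh. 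This is exactly the ``recycling dependence of T-vertices'' that the paper says its method is designed to avoid. So the Euler-formula argument should stand as your proof; the inductive variant would need a different reduction step (e.g. shortening an l-edge rather than deleting it) to cope with pinwheel configurations.
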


\begin{proof}
Suppose, in $\mathscr{T}$, there are $V^T$ T-vertices, $V^{bT}$
boundary vertices (excluding four corner points). Since every cell
has four vertices, running through all the cells will meet every
crossing vertices four times, every interior T-vertices twice, every
corner points once, and every other boundary vertices twice. Hence
it follows that
$$4F=4V^+ +2V^T +2V^{bT}+4.$$
On the other hand, the end-points of every interior l-edge are
either interior T-vertices or boundary vertices (not corner points).
Therefore, we have $V^T+V^{bT}=2E$. From these two equations one
gets $F=V^++E+1$.
\end{proof}

\subsection{Dimension theorem}

\begin{theorem}
\label{thm3.4} Given a regular T-mesh $\mathscr{T}$ with $V^+$
crossing vertices, it follows that $$\dim
\overline{\mathbf{S}}(1,1,0,0,\mathscr{T})=V^+.$$
\end{theorem}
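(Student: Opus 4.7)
The plan is to apply formula~\eqref{df} with $m=n=1$, which reads
\[
\dim\overline{\mathbf{S}}(1,1,0,0,\mathscr{T}) \;=\; d_{0,0} - r_{0,0},
\]
and to evaluate the two quantities on the right separately before invoking Lemma~\ref{lemm3.3}. The value $d_{0,0}=F$ is immediate, since an element of $\overline{\mathbf{S}}(0,0,-1,-1,\mathscr{T})$ is determined by an arbitrary choice of constant on each of the $F$ cells of $\mathscr{T}$ (and is zero off $\Omega$).

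For $r_{0,0}$, I would invoke Lemmas~\ref{lemm3.1} and~\ref{lemm3.2}, together with the l-edge reduction carried out in the discussion immediately preceding this theorem, to reformulate the condition $\mathcal{I}(g)\in\overline{\mathbf{S}}(1,1,0,0,\mathscr{T})$ as the simultaneous vanishing of a specific collection of $E+2$ linear functionals on $\overline{\mathbf{S}}(0,0,-1,-1,\mathscr{T})$: the strip-integral functionals attached to the horizontal and vertical spans of the associated tensor-product mesh $\mathscr{T}^c$, together with the partial-integral functionals attached to individual $\mathscr{T}$-l-edges sitting on a common line of $\mathscr{T}^c$. The total-integral identity
\[
\sum_{i}(x_{i+1}-x_i)\,C_i \;=\; \iint g \;=\; \sum_{j}(y_{j+1}-y_j)\,D_j,
\]
already recorded in the text, exhibits one explicit dependence among these $E+2$ functionals, so $r_{0,0}\leqslant E+1$.

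The crux is to prove the matching lower bound, i.e., that this is the \emph{only} dependence. I would assume a vanishing combination of the $E+2$ functionals and test it against the indicator of each cell $\phi$ of $\mathscr{T}$; each such evaluation yields a scalar identity involving only the coefficients of functionals whose support meets $\phi$, namely the $\mathscr{T}^c$-strips contained in $\phi$ and the l-edges bordering $\phi$. Walking across shared edges between neighboring cells — which is possible because $\Omega$ is a connected rectangle and the cell-adjacency graph of $\mathscr{T}$ is therefore connected — should propagate these local identities into one global relation, forcing each coefficient to be a common scalar multiple of the coefficient assigned to it by the total-integral identity. I expect this propagation step to be the main obstacle: one must handle uniformly the mix of $\mathscr{T}^c$-strip functionals and the extra partial-integral functionals arising when several $\mathscr{T}$-l-edges share a line in $\mathscr{T}^c$, and verify that the adjacency chains are rich enough to reach every coefficient.

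Once $r_{0,0}=E+1$ is established, the topological identity $F=V^{+}+E+1$ from Lemma~\ref{lemm3.3} closes the argument:
\[
\dim\overline{\mathbf{S}}(1,1,0,0,\mathscr{T}) \;=\; F-(E+1) \;=\; V^{+}.
\]
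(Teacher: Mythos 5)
Your reduction via Equation \eqref{df} is set up correctly ($d_{0,0}=F$ is immediate, and the $E+2$ functionals with the total-integral dependence give $r_{0,0}\leqslant E+1$), but there is a genuine gap at the decisive step: Equation \eqref{df} requires the \emph{exact} value of $r_{0,0}$, and your proposal reduces everything to the claim that the total-integral identity is the only dependence, i.e. $r_{0,0}=E+1$. By Lemma \ref{lemm3.3} that claim is equivalent to the theorem itself, and you do not prove it — the ``propagation'' of the local identities obtained by testing a vanishing combination against cell indicators is only described as an expectation, and you flag it yourself as the main obstacle. The difficulty is real: a cell of $\mathscr{T}$ can span several strips of $\mathscr{T}^c$, the partial l-edge functionals involve differences of one-sided traces along a line rather than strip integrals, and chasing these relations around T-junctions is exactly the ``recycling dependence of T-vertices'' problem the authors state they want to avoid. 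So, as written, the argument is incomplete precisely where all the work lies.

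The paper takes a route that sidesteps the exact rank computation entirely. It proves the upper bound $\dim\overline{\mathbf{S}}(1,1,0,0,\mathscr{T})\leqslant V^+$ by showing the crossing vertices form a determining set (a maximum-type argument: a spline in the space vanishing at all crossing vertices is identically zero, treating separately the cases where the maximum is attained along an edge or only at a T-corner), and for the lower bound it needs only defective rank at least one, i.e. inequality \eqref{lb}: $\dim\geqslant F-(E+2)+1=V^+$ by Lemma \ref{lemm3.3}. The two bounds together give the theorem, and the statement that the defective rank is exactly one then follows as a consequence of Theorem \ref{thm3.4} — the paper explicitly says so — rather than being an ingredient of its proof. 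To salvage your approach you would either have to carry out the rank argument in full detail, or replace it, as the paper does, by an independent proof of the upper bound.
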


\begin{proof}
We first prove that $\dim
\overline{\mathbf{S}}(1,1,0,0,\mathscr{T})\leqslant V^+$. Suppose a
function $f\in\overline{\mathbf{S}}(1,1,0,0,\mathscr{T})$ reaches
zero at all the crossing vertices. We intend to prove that
$f\equiv0$.

Suppose $f\not\equiv0$. Without loss of generality, we assume $f$ is
greater than zero in some regions in $\mathscr{T}$. Then there
exists a point $p$ in $\mathscr{T}$ such that $f(p)=\delta=\max
f>0$. Let $p$ be in the cell $c$. There will only happen the
following two cases:
\begin{enumerate}
\item there exists an edge $e$ of $c$ such that $f$ is a constant
$\delta$ along $e$;
\item for any edge $e$ of $c$, $f$ is not a constant along $e$.
\end{enumerate}

Consider Case 1. Let $l_0$ denote the l-edge on which $e$ lies.
Define a set $L$ which consists of all the l-edges on which $f$ are
constants $\delta$.  $P$ consists of all the end-points of the
l-edges in $L$. Since $l_0 \in L$, both $L$ and $P$ are non-empty.
Now we category the vertices in $P$ into two types. If a vertex in
$P$ is also an interior vertex on another l-edge in $L$, then the
vertex is called a flat vertex. Otherwise it is called a non-flat
vertex. In the following we will prove that there must exist at
least one non-flat vertex in $P$. If so, select one non-flat vertex
$q$. Then $q$ must lie on an l-edge $l_1$ which is not in $L$. $q$
is an interior point on $l_1$. Since $f(q) = \delta$ and $f|_{l_1}$
is not a constant, there exists a point $r$ on $l_1$ such that
$f(r)> \delta$, which contracts with the fact that $\delta$ is the
maximum of $f$ over $\mathscr{T}$.

In fact, the non-flat vertex $q$ can be selected to the vertex in
$P$ with the minimal $y$ coordinate. If there exist more than one
vertex with the minimal $y$ coordinate, one with the minimal $x$
coordinate among them is selected. Such the selection ensures that
$q$ is a non-flat vertex. If not, $q$ lies on two l-edges $l_2$ and
$l_3$ in $L$, where $l_2$ is horizontal and $l_3$ vertical. If $q$
is a horizontal T-vertex, then the bottom end-point of $l_3$ has a
smaller $y$ coordinate than $q$. If $q$ is a vertical T-vertex, then
the left end-point of $l_2$ has the same $y$ coordinate as $q$, but
with smaller $x$ coordinate than $q$. Both the cases contradict with
the selection of $q$. Hence $q$ is a non-flat point. Then we can
prove $f \equiv 0$ for Case 1.

Now we consider Case 2. Since $f|_c$ is a bilinear function and $f$
is not a constant along any edges of $c$, $f$ reaches its maximum
only at one of its corners. Hence $p$ is a corner of $c$, which is a
T-vertex, say a horizontal T-vertex. The vertical l-edge through $p$
is assumed to be $l_4$, which takes $p$ as its interior point. Since
$f(p) = \delta$ and $f|_{l_4}$ is not a constant, there exists a
point $s$ on $l_4$ such that $f(s)> \delta$, which contracts with
the fact that $\delta$ is the maximum of $f$ over $\mathscr{T}$.

Summarizing the consideration for both the cases, it follows that
$f\equiv0$. Then all the crossing vertices in $\mathscr{T}$ form a
determining set of the spline space
$\overline{\mathbf{S}}(1,1,0,0,\mathscr{T})$. According to the
theory of the determining sets in spline functions
\cite{alfeld,alfeld1}, one gets
\begin{align}
\dim \overline{\mathbf{S}}(1,1,0,0,\mathscr{T})\leqslant V^+.
\label{S11}
\end{align}

On the other hand, we will prove that $\dim
\overline{\mathbf{S}}(1,1,0,0,\mathscr{T})\geqslant V^+$ in the
following. To do so, we consider the operator of mixed partial
derivative defined in Equation \eqref{mpd} as $m=n=1$:
$$\mathcal{D}: \overline{\mathbf{S}}(1,1,0,0,\mathscr{T}) \rightarrow
\overline{\mathbf{S}}(0,0,-1,-1,\mathscr{T}).$$ Here $\mathcal{D}$
is injective. The spline space
$\overline{\mathbf{S}}(0,0,-1,-1,\mathscr{T})$ is with dimension
$F$, the number of the cells in $\mathscr{T}$. According to the
analysis in the former section, in order to ensure $\mathcal{I}(g)
\in \overline{\mathbf{S}}(1,1,0,0,\mathscr{T})$, one needs to
satisfy $E+2$ constraints, which have defective rank at least one.
Then according to Lemma \ref{lemm3.3},
\begin{equation} \label{eqn3.61}
 \dim
\overline{\mathbf{S}}(1,1,0,0,\mathscr{T}) \geqslant F-(E+2)+1 =
V^+.
\end{equation}
Combining Equations \eqref{S11} and \eqref{eqn3.61}, it follows that
the dimension theorem is proved, i.e., $$\dim
\overline{\mathbf{S}}(1,1,0,0,\mathscr{T})=V^+.$$
\end{proof}

\subsection{Basis functions}

\label{sec3.4} We can construct a set of basis functions
$\{b_i(x,y)\}_{i=1}^{V^+}$ for the spline space
$\overline{\mathbf{S}}(1,1,0,0,\mathscr{T})$. The basis functions
should have the following properties:
\begin{enumerate}
\item \textbf{Compact Support}: For any $i$, $b_i(x,y)$ has a support
as small as possible;

\item \textbf{Nonnegativity}: For any $i$, $b_i(x,y) \geqslant 0$;

\item \textbf{Partition of Unity}: If $\mathscr{T}$ is an extension of some T-mesh
$\mathscr{T}_0$ with respect to the spline space
$\overline{\mathbf{S}}(1,1,0,0,\mathscr{T})$, and the region
occupied by $\mathscr{T}_0$ is $\Omega$, then
$$ \sum_{i=1}^{V^+} b_i(x,y) =1, \quad (x,y) \in \Omega. $$
\end{enumerate}

The basis functions with those properties can be constructed as
follows: Suppose the crossing vertices in $\mathscr{T}$ are $v_i$
with coordinate $(x_i, y_i)$, $i=1, \ldots, V^+$. Then we require
the function $b_i(x,y)$ satisfy $ b_i(x_j, y_j) = \delta_{ij}$.
According to the dimension theorem \ref{thm3.4} and the first part
in its proof, $b_i(x,y)$ is determined uniquely. All the functions
$b_i(x,y)$ form a set of basis functions of the spline space, which
have the former three properties. In fact, it is easy to show that
Properties 1 and 2 are satisfied. Now we prove Property 3.

\begin{theorem}
Given a regular T-mesh $\mathscr{T}$, which occupies a rectangle
$\Omega$, its extension associated with
$\mathbf{S}(1,1,0,0,\mathscr{T})$ is $\mathscr{T}^{\varepsilon}$.
The crossing vertices in $\mathscr{T}^{\varepsilon}$ are $v_i$ with
coordinate $(x_i, y_i)$, $i=1, \ldots, V^+$. The function set
$\{b_i(x,y)\}_{i=1}^{V^+} \subset
\overline{\mathbf{S}}(1,1,0,0,\mathscr{T}^\varepsilon)$ satisfy
$b_i(x_j,y_j) = \delta_{ij}$. Then
\begin{equation}
\sum_{i=1}^{V^+} b_i(x,y) =1, \quad (x,y) \in \Omega. \label{eqn3.7}
\end{equation}
\end{theorem}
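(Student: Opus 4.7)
The plan is to deduce the identity from the uniqueness statement in the first half of the proof of Theorem~\ref{thm3.4}, namely that any element of $\overline{\mathbf{S}}(1,1,0,0,\mathscr{T}^\varepsilon)$ is determined by its values at the crossing vertices. To this end I would first produce a reference function $h^\ast\in\overline{\mathbf{S}}(1,1,0,0,\mathscr{T}^\varepsilon)$ whose restriction to $\Omega$ is identically $1$. Such an $h^\ast$ exists by the Remark following Theorem~\ref{thm2.1} specialized to $(m,n)=(1,1)$, since the constant function $1$ trivially lies in $\mathbf{S}(1,1,0,0,\mathscr{T})$ on the inner T-mesh. Explicitly, $h^\ast$ can be written as a linear ramp in each of the four rectangular strips of $\mathscr{T}^\varepsilon\setminus\Omega$ adjacent to $\partial\Omega$ (going from $1$ on $\partial\Omega$ to $0$ on $\partial\mathscr{T}^\varepsilon$) and as a bilinear corner-bump in each of the four corner cells; these pieces match continuously and each is bilinear.

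Next I would enumerate the crossing vertices of $\mathscr{T}^\varepsilon$. By construction the extension adds only edges perpendicular to $\partial\Omega$ inside the four rectangular strips of $\mathscr{T}^\varepsilon\setminus\Omega$, and adds no edges at all inside the four corner squares, so no new crossing vertex can appear strictly inside $\mathscr{T}^\varepsilon\setminus\overline{\Omega}$. Every crossing vertex of $\mathscr{T}^\varepsilon$ therefore lies in $\overline{\Omega}$: it is either a crossing vertex of $\mathscr{T}$ in the open interior of $\Omega$, or a boundary vertex of $\mathscr{T}$ on $\partial\Omega$ (now $4$-valent because its perpendicular interior edge has been extended outward), or one of the four corners of $\Omega$ (now $4$-valent from the extensions of the two boundary l-edges meeting there). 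Since $h^\ast$ is continuous and $h^\ast|_{\Omega}\equiv 1$, continuity forces $h^\ast(v_j)=1$ at every such crossing vertex $v_j$.

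Finally, the candidate $S(x,y):=\sum_{i=1}^{V^+} b_i(x,y)$ lies in $\overline{\mathbf{S}}(1,1,0,0,\mathscr{T}^\varepsilon)$, and the interpolation condition $b_i(v_j)=\delta_{ij}$ immediately gives $S(v_j)=1$ at every crossing vertex $v_j$. Hence $h^\ast$ and $S$ are two elements of $\overline{\mathbf{S}}(1,1,0,0,\mathscr{T}^\varepsilon)$ with identical values on the full crossing-vertex set, so the uniqueness argument from the first half of the proof of Theorem~\ref{thm3.4} (applied to the difference $h^\ast-S$) forces $h^\ast\equiv S$ on all of $\mathbb{R}^2$. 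Restricting this identity to $\Omega$ gives Equation~\eqref{eqn3.7}.

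The main obstacle is the middle step: one must check carefully that the extension procedure creates no hidden crossing vertex inside $\mathscr{T}^\varepsilon\setminus\overline{\Omega}$, and that each crossing vertex produced on $\partial\Omega$ is a point where $h^\ast=1$. Once this combinatorial bookkeeping is pinned down, the partition-of-unity property follows for free from the uniqueness of bilinear HBC splines on the extended mesh, with no further computation required.
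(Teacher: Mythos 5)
Your proof is correct, and it reaches the identity by a different route than the paper, although both arguments ultimately rest on the same key fact: the first half of the proof of Theorem \ref{thm3.4}, namely that a function in $\overline{\mathbf{S}}(1,1,0,0,\cdot)$ vanishing at all crossing vertices is identically zero. The paper stays on the original mesh: it first shows $f=\sum_i b_i$ equals $1$ along $\partial\Omega$ (the boundary vertices of $\mathscr{T}$ are crossing vertices of $\mathscr{T}^\varepsilon$, and $f$ restricted to a boundary l-edge is piecewise linear with those vertices as knots), then truncates $f-1$ to $\Omega$, observes the truncation lies in $\overline{\mathbf{S}}(1,1,0,0,\mathscr{T})$ and vanishes at the crossing vertices of $\mathscr{T}$, and invokes the determining-set argument over $\mathscr{T}$. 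You instead work entirely over the extended mesh: you build an explicit $h^\ast\in\overline{\mathbf{S}}(1,1,0,0,\mathscr{T}^\varepsilon)$ with $h^\ast|_\Omega\equiv 1$ (ramps in the side strips, bilinear bumps in the corner cells, consistent with the Remark after Theorem \ref{thm2.1}), verify the combinatorial fact that every crossing vertex of $\mathscr{T}^\varepsilon$ lies in $\overline{\Omega}$ (which agrees with the paper's identification of $V^+_\varepsilon$ with crossing plus boundary vertices of $\mathscr{T}$), and apply the determining-set uniqueness over $\mathscr{T}^\varepsilon$ to $h^\ast-\sum_i b_i$. What the paper's route buys is that it needs no bookkeeping about where the extension creates crossing vertices and no explicit extension of the constant $1$, only the short boundary-restriction argument; what your route buys is that it avoids the truncation step (and thus the implicit verification that the truncated difference really lies in the HBC space over $\mathscr{T}$) and yields the slightly stronger global identity $\sum_i b_i=h^\ast$ on all of $\mathds{R}^2$, at the cost of the crossing-vertex bookkeeping you correctly flag as the delicate point.
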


\begin{proof}
Let $$f(x,y) = \sum_{i=1}^{V^+} b_i(x,y),$$ and $\ell$ denote the
boundary of $\mathscr{T}$. In order to show Equation \eqref{eqn3.7}
holds, we first prove $f|_\ell \equiv 1$. Because the vertices on
$\ell$ are crossing vertices in $\mathscr{T}^\varepsilon$, it
follows that $f$ reaches 1 on these vertices. Since $f|_\ell$ is a
piecewise linear function with knots being these vertices, it
follows that $f|_\ell \equiv 1$. Then, in the following, we prove
that, for any $(x,y) \in \Omega$, $f(x,y)=1$. Consider the function
$$g(x,y) = \begin{cases}
f(x,y)-1 & (x,y) \in \Omega \\
0 & \mbox{otherwise}
\end{cases}
$$
Since $f|_\ell \equiv 1$, one has $g \in
\overline{\mathbf{S}}(1,1,0,0,\mathscr{T})$. But $g$ is zero at all
the crossing vertices of $\mathscr{T}$, it follows that, according
to the proof of the dimension theorem \ref{thm3.4}, we have $g
\equiv 0$, i.e., $f(x,y) =1$, $(x,y) \in \Omega$.
\end{proof}

\noindent \textbf{Remarks}: There are some interesting problems open
here. \begin{enumerate} \item How to directly specify the former
basis functions in every cell of a general T-mesh? \item How to
evaluate the function or the surface which is represented in the
linear combination of the former basis functions? \item What is the
``knot'' insertion algorithm in this spline space?
\end{enumerate}
Though the space
is just bilinear, the solutions to these problems will possibly hint
us how to do in higher degree spline spaces over T-meshes.

\section{A Lower Bound of the Dimension of $\overline{\mathbf{S}}(2,2,1,1,\mathscr{T})$}
\label{sec4}

We can apply a similar method proposed in the proof of the dimension
theorem of $\overline{\mathbf{S}}(1,1,0,0,\mathscr{T})$ in the
former section to the dimension analysis of
$\overline{\mathbf{S}}(2,2,1,1,\mathscr{T})$. After that, we can
obtain a lower bound of the dimension, i.e.,
\begin{equation}
\dim \overline{\mathbf{S}}(2,2,1,1,\mathscr{T})\geqslant V^+-E+1.
\end{equation}

\subsection{Some lemmas}

Now we consider the operator of mixed partial derivative as follows:
$$\mathcal{D}:=\frac{\partial^2}{\partial x\partial y}:
\overline{\mathbf{S}}(2,2,1,1,\mathscr{T}) \rightarrow
\overline{\mathbf{S}}(1,1,0,0,\mathscr{T}).$$ Here $\mathcal{D}$ is
injective as well. The operator $\mathcal{I}(g)$ is defined in the
same way as Equation \eqref{ig}. The following lemmas discuss the
constraints ensuring $\mathcal{I}(g) \in
\mathbf{S}(2,2,1,1,\mathscr{T})$ for any $g \in
\mathbf{S}(1,1,0,0,\mathscr{T})$.

\begin{lemma}
\label{lemm4.1} Given a regular T-mesh $\mathscr{T}$, let the
coordinate of the end-points of any horizontal l-edges $l^h_{i}$ be
$(x^h_{i1},y^h_i)$ and $(x^h_{i2},y^h_i)$, $i=0,1,\ldots,m$, and the
coordinate of the end-points of any vertical l-edge $l^v_{j}$ be
$(x^v_j,y^v_{j1})$ and $(x^v_j,y^v_{j2})$, $j=0,1,\ldots,n$. For any
$g \in \overline{\mathbf{S}}(1,1,0,0,\mathscr{T})$, it follows that
\begin{equation*} \mathcal{I}(g) \in
\overline{\mathbf{S}}(2,2,1,1,\mathscr{T}) \Leftrightarrow \left\{
\begin{array}{lll}
\displaystyle  \int^{x^h_{i2}}_{x^h_{i1}}\frac{\partial}{\partial
y}g(s,y^h_i-){d}s=\int^{x^h_{i2}}_{x^h_{i1}}\frac{\partial}{\partial
y}g(s,y^h_i+){d}s, \quad i=0,1,\ldots,m, \mbox{and}\\[4mm]
  \displaystyle    \int^{y^v_{j2}}_{y^v_{j1}}\frac{\partial}{\partial
x}g(x^v_j-,t)dt=\int^{y^v_{j2}}_{y^v_{j1}}\frac{\partial}{\partial
x}g(x^v_i+,t){d}t, \quad j=0,1,\ldots,n.
\end{array}
\right.
\end{equation*}
\end{lemma}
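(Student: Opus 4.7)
The plan is to prove both directions by analyzing when $\mathcal{I}(g)$ is piecewise bi-quadratic on the cells of $\mathscr{T}$ and satisfies HBC. Note that $\mathcal{I}(g)\in C^{1,1}(\mathds{R}^{2})$ is automatic: $g$ is continuous and $\partial_{xy}^{2}\mathcal{I}(g)=g$, so $\mathcal{I}(g)$ and its partials $\partial_x\mathcal{I}(g)$, $\partial_y\mathcal{I}(g)$, $\partial_{xy}^{2}\mathcal{I}(g)$ are all continuous on $\mathds{R}^{2}$. Hence the only non-trivial requirements for $\mathcal{I}(g)\in\overline{\mathbf{S}}(2,2,1,1,\mathscr{T})$ are (a) piecewise bi-quadraticity on each cell, and (b) the homogeneous boundary condition.

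For necessity, I would handle the l-edges in the same spirit as Lemma \ref{lemm3.1}. For an interior horizontal l-edge $l^h_i$ that does not span the full width, I pick a cell $c=[a,b]\times [c',d]$ of $\mathscr{T}$ with $y^h_i$ in the interior of its $y$-range and lying on one side of $l^h_i$ in $x$; such a cell exists because the T-vertex endpoint of $l^h_i$ sits on a longer vertical edge crossing this level. On $c$, bi-quadraticity forces $\partial_{y}^{2}\mathcal{I}(g)(x,y)=\int_{-\infty}^{x}\partial_{y}g(s,y)\,ds$ to be independent of $y$; its jump across $y=y^h_i$, namely $\int_{x^h_{i1}}^{x^h_{i2}}[\partial_{y}g(s,y^h_i)]\,ds$, must therefore vanish, giving the desired equation. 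For a full-width interior horizontal l-edge or a boundary horizontal l-edge no such cell $c$ exists, so I would instead use HBC of $\mathcal{I}(g)$: continuity together with $\mathcal{I}(g)\equiv 0$ for $x>x_r$ gives $\partial_{y}\mathcal{I}(g)(x_r,y)=0$, i.e.\ $\int_{x_l}^{x_r}g(s,y)\,ds\equiv 0$, a smooth function of $y$, whence its derivative has no jumps and the stated identity follows. Vertical l-edges are handled symmetrically.

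For sufficiency, I fix a cell $c=[a,b]\times[c',d]$ and decompose
\[
\mathcal{I}(g)(x,y)=A(x,y)+B(x)+C(y)+D,\qquad (x,y)\in c,
\]
where $A(x,y)=\int_{a}^{x}\int_{c'}^{y}g\,ds\,dt$ is bi-quadratic by direct integration of $g|_c$, $B(x)=\int_{a}^{x}H(s)\,ds$ with $H(s)=\int_{-\infty}^{c'}g(s,t)\,dt$, and $C(y)$ is the symmetric integral over strips to the left of $c$. Since $g$ is piecewise bilinear and the cells below $c$ form horizontal slabs within each vertical strip between consecutive vertical l-edges, $H$ is piecewise linear on $[a,b]$, continuous, with slope jumps only at interior vertical l-edges having $x$-coordinate in $(a,b)$. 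Such an l-edge cannot cross $c$, so its $y$-range terminates at or below $c'$, and the hypothesized integral condition on the l-edge cancels the slope jump of $H$. Hence $H$ is linear, $B$ is quadratic, and symmetrically $C$ is quadratic; thus $\mathcal{I}(g)|_c$ is bi-quadratic. Finally, to verify HBC, set $M(s)=\int g(s,t)\,dt$; the interior vertical l-edge conditions make $M'$ continuous on $[x_l,x_r]$, so $M$ is affine, and the left boundary vertical l-edge condition combined with $g$'s own HBC (yielding $M(x_l)=0$ and $M'(x_l+)=0$) forces $M\equiv 0$, so $\mathcal{I}(g)$ vanishes for $y>y_t$. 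The remaining HBC directions follow symmetrically.

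The main obstacle is keeping the geometric case analysis organized: for necessity one must distinguish between non-full-width interior, full-width interior, and boundary l-edges; for sufficiency one must handle the possibility that an l-edge contributing to $H'$ terminates exactly at the bottom of $c$ (so its $y$-range includes $c'$), and then combine interior and boundary l-edge conditions correctly when propagating the vanishing of $M$ in the HBC step.
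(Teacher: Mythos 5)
Your sufficiency argument is sound and is a genuinely different route from the paper's: the paper argues by contradiction (taking a lowest discontinuity line $\bar y$ of the second derivatives and a leftmost offending cell), whereas your decomposition $\mathcal{I}(g)|_c=A(x,y)+B(x)+C(y)+D$ with the strip integrals $H$ and the symmetric one in $y$ verifies biquadraticity cell by cell directly; it works because the slope jump of $H$ at an abscissa $x^v_j\in(a,b)$ is the \emph{sum} of the jump integrals over the individual vertical l-edges at that abscissa below $c'$, each of which is killed by its own hypothesis, so no ordering of the l-edges is needed. Your HBC verification via $M(s)=\int g(s,t)\,dt$ and $N(t)=\int g(s,t)\,ds$ is also more careful than the paper's sketch.

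The necessity direction, however, has a genuine gap. Inside the cell $c$ adjacent to the T-vertex endpoint, the jump of $\partial_y^2\mathcal{I}(g)$ across $y=y^h_i$ at abscissa $x$ is $\int_{-\infty}^{x}\bigl[\partial_y g(s,y^h_i)\bigr]\,ds$, and this picks up a contribution from \emph{every} horizontal l-edge at height $y^h_i$ lying to the left of $x$, not only from $l^h_i$; identifying it with $\int_{x^h_{i1}}^{x^h_{i2}}[\partial_y g(s,y^h_i)]\,ds$ is unjustified (and false as stated) whenever several l-edges share a $y$-coordinate, which happens routinely (e.g.\ in hierarchical and extended T-meshes). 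The vanishing of the cell jump only gives the cumulative condition, and you must peel the l-edges off one at a time from left to right, exactly as the paper does after its Equation \eqref{int4}: the leftmost l-edge at that height has no contributions to its left, so its integral vanishes, and then inductively each subsequent one does. A related sub-case you also need: if the only T-vertex endpoint of $l^h_i$ is its \emph{left} endpoint (right endpoint on the boundary), your chosen cell sits to the left of $l^h_i$ and its jump does not involve $l^h_i$ at all; there the anchor must come from the right boundary via your HBC identity $\int_{x_l}^{x_r} g(s,y)\,ds\equiv 0$ (which gives the total jump over the whole height-$y^h_i$ line), again combined with the same left-to-right induction. With that induction added, your necessity argument closes and essentially coincides with the paper's.
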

\begin{proof} We first prove the necessity $\Longrightarrow$. Let $f(x,y)=\mathcal{I}(g)(x,y)$.
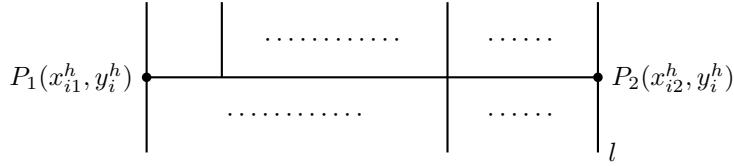
\begin{figure}[!htb]
\begin{center}
\setlength{\unitlength}{1cm}
\begin{pspicture}(1,1)(8,3)
\psline(1,2)(7,2)\psline(1,1)(1,3) \psline(7,1)(7,3)
\psline(2,2)(2,3)\rput(3.5,2.5){$\cdots\cdots\cdots\cdots$}\rput(6,2.5){$\cdots\cdots$}
\psline(5,1)(5,3)\rput(3,1.5){$\cdots\cdots\cdots\cdots$}\rput(6,1.5){$\cdots\cdots$}
\psdot(1,2)\rput(0,2){$P_1(x^h_{i1},y^h_i)$}
\psdot(7,2)\rput(8,2){$P_2(x^h_{i2},y^h_i)$} \rput(7.2,1){$l$}
\end{pspicture}
\caption{A horizontal l-edge (2)\label{HL2}}
\end{center}
\end{figure}
Without loss of generality, we only prove that, when $f \in
\overline{\mathbf{S}}(2,2,1,1,\mathscr{T})$, the constraints
corresponding to horizontal l-edges are satisfied. As shown in
Figure \ref{HL2}, the two end-points of the horizontal l-edge are
$P_1(x^h_{i1},y^h_i)$ and $P_2(x^h_{i2},y^h_i)$. The vertical edge
on which $P_2$ lies is $l$. Because
$f\in\overline{\mathbf{S}}(2,2,1,1,\mathscr{T})$, it follows that
$f(x_{i2}^h,y)$ is a quadratic polynomial with respect to the
variable $y$ in a neighborhood of $P_2$ along $l$. Hence,
\begin{align*}
f(x^h_{i2},y^h_i-)&=f(x^h_{i2},y^h_i+),\\
\frac{\partial}{\partial
y}f(x^h_{i2},y^h_i-)&=\frac{\partial}{\partial
y}f(x^h_{i2},y^h_i+),\\
\frac{\partial^2}{\partial
y^2}f(x^h_{i2},y^h_i-)&=\frac{\partial^2}{\partial
y^2}f(x^h_{i2},y^h_i+).
\end{align*}
According to the definition of $f$ and the continuous of $g$, the
first two equations hold trivially. Substituting the definition of
$f$ into the last equation, one has
\begin{equation}\label{int4}
\int_{-\infty}^{x^h_{i2}}\frac{\partial}{\partial y}g(s,y^h_i-)ds
 =\int_{-\infty}^{x^h_{i2}}\frac{\partial}{\partial y}g(s,y^h_i+)ds.
\end{equation}

Extend the current l-edge to the left boundary of the T-mesh. If
there does not exist any other l-edges on the extension, then in
every cell which intersects the extension on the left side of
$(x^h_{i1},y^h_i)$, $g$ is a single bilinear function. Hence it
follows that
\begin{equation}\label{int5}
\int_{-\infty}^{x^h_{i1}}\frac{\partial}{\partial y}g(s,y^h_i-)ds
 =\int_{-\infty}^{x^h_{i1}}\frac{\partial}{\partial y}g(s,y^h_i+)ds.
\end{equation}
Subtracting Equation \eqref{int5} from Equation \eqref{int4}, one
gets
$$
\int_{x^h_{i1}}^{x^h_{i2}}\frac{\partial}{\partial y}g(s,y^h_i-)ds
 =\int_{x^h_{i1}}^{x^h_{i2}}\frac{\partial}{\partial y}g(s,y^h_i+)ds.
$$
This proves that the equation in the lemma corresponding to the
current l-edge holds. If there exists some other l-edges on the
extension, then we consider these l-edges one by one from left to
right. We can prove that the former equations hold for these
l-edges. Hence for the current l-edge, according to Equation
\eqref{int4}, the same equation holds as well.

Now we prove the sufficiency $\Longleftarrow$. We will show that,
for any $g \in \overline{\mathbf{S}}(1,1,0,0,\mathscr{T})$, if the
following equations hold:
\begin{align}
\label{eqn4.2} &\int^{x^h_{i2}}_{x^h_{i1}}\frac{\partial}{\partial
y}g(s,y^h_i-){d}s=\int^{x^h_{i2}}_{x^h_{i1}}\frac{\partial}{\partial
y}g(s,y^h_i+){d}s, \quad i=0,1,\ldots,m,\\
\label{eqn4.3} &\int^{y^v_{j2}}_{y^v_{j1}}\frac{\partial}{\partial
x}g(x^v_j-,t)dt=\int^{y^v_{j2}}_{y^v_{j1}}\frac{\partial}{\partial
x}g(x^v_i+,t){d}t, \quad j=0,1,\ldots,n,
\end{align}
then $f$ is a single biquadratic polynomial in every cell of
$\mathscr{T}$.

Otherwise, suppose in some cell $c_i$, $f$ is piecewise with
horizontal discontinuous-lines $y=y_{i_1},\ldots,y=y_{i_{n_i}}$ and
vertical discontinuous-lines $x=x_{i_1},\ldots,x=x_{i_{m_i}}$ of the
partial derivatives of order two. Let $\mathscr{C}_y=\bigcup_i
\{y_{i_1},\ldots,y_{i_{n_i}}\}$, $\mathscr{C}_x=\bigcup_i
\{x_{i_1},\ldots,x_{i_{m_i}}\}$. Then according to the assumption,
$\mathscr{C}_x \bigcup \mathscr{C}_y$ is non-empty. Without loss of
generality, we assume $\mathscr{C}_y$ is non-empty. Let
$\bar{y}=\min\limits_{y\in\mathscr{C}_y}y$. Suppose $c_i$ is the
leftmost cell which takes $y=\bar{y}$ as its inner
discontinuous-line of the partial derivatives of order two with
respect to $x$. Denote the $x$ coordinate of the left boundary edge
of $c_i$ is $x_0$, and $(x_1,\bar{y})$ is a point on the
discontinuous-line, where $x_1>x_0$. Then it follows that
\begin{equation*}
\int_{-\infty}^{x_0}\frac{\partial}{\partial y}g(s,\bar{y}-)ds
                        =\int_{-\infty}^{x_0}\frac{\partial}{\partial
                        y}g(s,\bar{y}+)ds.
\end{equation*}
Since $(x_1,\bar{y})$ is a discontinuous point, it follows that
\begin{equation*}
\int_{-\infty}^{x_1}\frac{\partial}{\partial y}g(s,\bar{y}-)ds
\neq\int_{-\infty}^{x_1}\frac{\partial}{\partial y}g(s,\bar{y}+)ds.
\end{equation*}
Hence
$$
\int_{x_0}^{x_1}\frac{\partial}{\partial y}g(s,\bar{y}-)ds
 \neq\int_{x_0}^{x_1}\frac{\partial}{\partial y}g(s,\bar{y}+)ds.
$$
This contradicts with the fact that $\frac{\partial}{\partial
y}g(x,y)$ is continuous inside the cell $c_i$. Therefore we have
proved that $f$ is a single biquadratic polynomial in every cell of
$\mathscr{T}$.

In order to prove $f \in
\overline{\mathbf{S}}(2,2,1,1,\mathscr{T})$, one needs further to
verify that $f$ satisfies HBC. In fact, with the same approach, one
can prove that $f$ is a single biquadratic polynomial outside
$\mathscr{T}$. On the other hand, $f(x,y)$ is zero when $x \leqslant
x_0, y \leqslant y_0$, where $(x_0,y_0)$ is the coordinate of the
left-bottom corner of the T-mesh $\mathscr{T}$. Hence $f$ is
everywhere zero outside $\mathscr{T}$, which ensures that $f$
satisfies HBC. Therefore $f \in
\overline{\mathbf{S}}(2,2,1,1,\mathscr{T})$.
\end{proof}

According to the former lemma and the fact $\dim
\overline{\mathbf{S}}(1,1,0,0,\mathscr{T}) = V^+$, in order to
ensure $f\in\overline{\mathbf{S}}(2,2,1,1,\mathscr{T})$, there are
$E+4$ constraints with $V^+$ under-determining coefficients, where
$E$ is the number of interior l-edges. But these constraints are not
linear independent, as stated in the following lemma.

\begin{lemma}
\label{lemm4.2} Given a regular T-mesh $\mathscr{T}$, whose occupied
rectangle is $(x_l,x_r) \times (y_b, y_t)$. Assume the different $y$
coordinates from all the horizontal l-edges are $y_0 < y_1 < \ldots
< y_n$. For $g \in \overline{\mathbf{S}}(1,1,0,0,\mathscr{T})$, it
follows that
\begin{equation}
\label{eqn4.4} \int^{x_r}_{x_l}\frac{\partial}{\partial
y}g(s,y_i-)ds=\int^{x_r}_{x_l}\frac{\partial}{\partial
y}g(s,y_i+)ds, \quad i=0,\ldots,n\end{equation} is equivalent with
\begin{equation}
\label{eqn4.5} \int^{x_r}_{x_l} g(s,y_i)ds=0,\quad i=1,\ldots,n-1.
\end{equation}
The similar conclusion can be made for vertical l-edges.
\end{lemma}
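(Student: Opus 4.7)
The plan mirrors that of Lemma~\ref{lemm3.2}, just one smoothness level higher: reduce both sides to a statement about the univariate function
$$ h(y) := \int_{x_l}^{x_r} g(s,y)\,ds. $$
First I would verify the structural properties of $h$. Since $g \in \overline{\mathbf{S}}(1,1,0,0,\mathscr{T})$ is globally continuous and is piecewise bilinear with a breakpoint structure determined by the (fixed) vertical l-edges inside any horizontal strip $(y_i,y_{i+1})$, integrating out $x$ yields an affine function of $y$ on each such strip. Globally $h$ is continuous (by continuity of $g$) and vanishes on $(-\infty,y_0]\cup[y_n,+\infty)$ (by HBC). Hence $h$ is a continuous piecewise-linear function of $y$ with possible breakpoints only in $\{y_0,y_1,\ldots,y_n\}$, satisfying $h(y_0)=h(y_n)=0$.

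Next I would translate the two hypotheses into statements about $h$. Differentiating under the integral sign inside each strip gives $h'(y)=\int_{x_l}^{x_r}\frac{\partial g}{\partial y}(s,y)\,ds$ for $y$ in the interior of any strip, so \eqref{eqn4.4} is equivalent to $h'(y_i-)=h'(y_i+)$ for $i=0,1,\ldots,n$, i.e., $h$ has no corner at any $y_i$. Condition \eqref{eqn4.5} simply says $h(y_i)=0$ for $i=1,\ldots,n-1$.

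For the forward direction, the no-corner condition at $y_1,\ldots,y_{n-1}$ together with the per-strip affineness of $h$ forces $h$ to be a single affine function on $[y_0,y_n]$; combined with the boundary values $h(y_0)=h(y_n)=0$ supplied by HBC, this yields $h\equiv 0$ on $[y_0,y_n]$, and in particular $h(y_i)=0$ for $i=1,\ldots,n-1$, which is \eqref{eqn4.5}. Conversely, once \eqref{eqn4.5} is assumed, $h$ is a continuous piecewise-linear function vanishing at every one of its possible breakpoints $y_0,\ldots,y_n$, so $h\equiv 0$; then $h'$ is identically zero off the breakpoints and both one-sided limits vanish at each $y_i$, giving \eqref{eqn4.4}. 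The statement for vertical l-edges follows symmetrically by setting $\tilde h(x):=\int_{y_b}^{y_t} g(x,t)\,dt$.

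The only point that demands care, and what I regard as the main obstacle, is justifying the per-strip affineness of $h$: it rests on the observation that the vertical l-edges crossing any fixed horizontal strip of $\mathscr{T}$ subdivide $[x_l,x_r]$ in a way that does not depend on $y$ within that strip, so that the integrand is a single bilinear polynomial on each subinterval and the resulting integral is a sum of pieces that are genuinely affine in $y$ on the whole strip. Once this is set up cleanly, the rest is bookkeeping exactly of the kind already handled in Lemma~\ref{lemm3.2}.
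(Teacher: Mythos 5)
Your proposal is correct and follows essentially the paper's own route: the paper likewise reduces everything to the strip-integrals $I_i=\int_{x_l}^{x_r}g(s,y_i)\,ds$, using the linearity of $g$ in $y$ on each strip to express the one-sided derivative integrals as divided differences of the $I_i$, together with the HBC values $I_0=I_n=0$. The only cosmetic difference is that the paper proves necessity by a bottom-up recursion through the constraints starting at the bottom boundary l-edge, whereas you argue globally that the absence of corners makes $h$ affine on $[y_0,y_n]$ and hence zero; the underlying identities are the same.
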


\begin{proof}
We first prove the necessity. Getting started from the bottom
boundary l-edge $l_0$ we have
$$\int^{x_r}_{x_l}\frac{\partial}{\partial y}g(s,y_0-)ds=0, \quad
\int^{x_r}_{x_l}g(s,y_0)ds=0.$$ Then according to Equation
\eqref{eqn4.4} as $i=0$, one gets
$$\int^{x_r}_{x_l}\frac{\partial}{\partial
y}g(s,y_0+)ds=0. $$ On the other hand, according to the piecewise
bilinear definition of $g$, one has
\begin{align}
\nonumber \int^{x_r}_{x_l}\frac{\partial}{\partial y}g(s,y_0+)ds
&=\int^{x_r}_{x_l}\frac{\partial}{\partial
y}\bigg(\frac{y_1-y}{y_1-y_0}g(s,y_0)+\frac{y-y_0}{y_1-y_0}g(s,y_1)\bigg)ds \\
\label{eqn4.6}
&=\frac{1}{y_1-y_0}\bigg(\int^{x_r}_{x_l}g(s,y_1)ds-\int^{x_r}_{x_l}g(s,y_0)ds\bigg)
\\
&= \frac{1}{y_1-y_0}\int^{x_r}_{x_l}g(s,y_1)ds. \nonumber
\end{align}
Therefore $$\int^{x_r}_{x_l}g(s,y_1)ds=0.$$ Recursively, one can
prove that Equation \eqref{eqn4.5} holds for every horizontal
l-edge.

In order to prove the sufficiency, we take the similar deduction in
Equation \eqref{eqn4.6}. One obtains
\begin{align*} \int^{x_r}_{x_l}\frac{\partial}{\partial
y}g(s,y_i-)ds & =
\frac{1}{y_i-y_{i-1}}\bigg(\int^{x_r}_{x_l}g(s,y_i)ds-\int^{x_r}_{x_l}g(s,y_{i-1})ds\bigg),
\\
\int^{x_r}_{x_l}\frac{\partial}{\partial y}g(s,y_i+)ds & =
\frac{1}{y_{i+1}-y_i}\bigg(\int^{x_r}_{x_l}g(s,y_{i+1})ds-\int^{x_r}_{x_l}g(s,y_i)ds\bigg).
\end{align*}
Therefore Equation \eqref{eqn4.5} ensures Equation \eqref{eqn4.4}
holds.
\end{proof}

\noindent \textbf{Remark}: In fact, from the proof of the lemma, we
can also conclude that, for horizontal l-edges,
$$
\int^{x_r}_{x_l}\frac{\partial}{\partial
y}g(s,y_i-)ds=\int^{x_r}_{x_l}\frac{\partial}{\partial
y}g(s,y_i+)ds, \quad i=0,\ldots,n $$ is equivalent with
$$
\int^{x_r}_{x_l}\frac{\partial}{\partial y}g(s,y_i-)ds = 0,\quad
i=1,\ldots,n-1.
$$

Lemma \ref{lemm4.2} states that there are at least two redundant
constraints among those corresponding to horizontal l-edges and
vertical l-edges, respectively. The following lemma tells us that we
can take the constraint along four boundary l-edges as redundant
ones.

\begin{lemma}
\label{lemm4.3} Given a regular T-mesh $\mathscr{T}$, the rectangle
occupied by it is $(x_l,x_r) \times (y_b, y_t)$, where the different
$y$ coordinates from horizontal l-edges are $y_0 < y_1 < \ldots <
y_n$. Let $g \in \overline{\mathbf{S}}(1,1,0,0,\mathscr{T})$. Then
Equation \eqref{eqn4.4} is equivalent with
\begin{equation}
\label{eqn4.7} \int^{x_r}_{x_l}\frac{\partial}{\partial
y}g(s,y_i-)ds=\int^{x_r}_{x_l}\frac{\partial}{\partial
y}g(s,y_i+)ds, \quad i=1,\ldots,n-1.
\end{equation}
The similar conclusion can be made for vertical l-edges.
\end{lemma}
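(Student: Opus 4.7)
The direction (4.4) $\Rightarrow$ (4.7) is immediate, so my plan is to prove the reverse: that dropping the two boundary indices $i=0$ and $i=n$ costs no information. The strategy is to recycle the reformulation used inside the proof of Lemma \ref{lemm4.2}. Set $A_i := \int_{x_l}^{x_r} g(s,y_i)\,ds$ and exploit the fact that, between any two consecutive horizontal l-edge levels, $g$ is linear in $y$, giving
$$\int_{x_l}^{x_r}\frac{\partial}{\partial y}g(s,y_i-)\,ds=\frac{A_i-A_{i-1}}{y_i-y_{i-1}},\qquad \int_{x_l}^{x_r}\frac{\partial}{\partial y}g(s,y_i+)\,ds=\frac{A_{i+1}-A_i}{y_{i+1}-y_i},$$
exactly as was computed in Equation \eqref{eqn4.6}.

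Under these identifications, the hypothesis (4.7) becomes the three-point collinearity condition
$$\frac{A_i-A_{i-1}}{y_i-y_{i-1}}=\frac{A_{i+1}-A_i}{y_{i+1}-y_i},\qquad i=1,\ldots,n-1,$$
which, iterated in $i$, forces the whole sequence of data points $(y_0,A_0),(y_1,A_1),\ldots,(y_n,A_n)$ to lie on a single straight line in the plane. Now the homogeneous boundary conditions kick in: since $g$ lives in $\overline{\mathbf{S}}(1,1,0,0,\mathscr{T})$, continuity across the bottom and top boundaries of $\Omega$ forces $g(s,y_b)\equiv g(s,y_t)\equiv 0$, i.e.\ $A_0=A_n=0$. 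A line through two distinct zeros of the horizontal axis is the zero line, so $A_i=0$ for every $i$. Plugging this back into the two ``missing'' identities at $i=0$ and $i=n$, each side reduces to a ratio of the form $(A_{\ast}-0)/h$ or vanishes because $g$ is identically zero outside $\mathscr{T}$, and (4.4) is recovered for these indices as well. The vertical l-edge case follows by an identical argument after swapping the roles of the two axes.

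I do not expect any serious obstacle; the content of the lemma is essentially bookkeeping on top of Lemma \ref{lemm4.2}. The only subtlety is making explicit that the HBC is what supplies the two ``free'' equations $A_0=A_n=0$, compensating for the two equations at $i=0$ and $i=n$ that have been removed in passing from (4.4) to (4.7). This is why the reduction from $n+1$ to $n-1$ constraints is lossless in the HBC setting but would fail without it.
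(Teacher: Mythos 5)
Your proposal is correct and follows essentially the same route as the paper: both use the piecewise-bilinear identity of Equation \eqref{eqn4.6} to turn \eqref{eqn4.7} into a three-point collinearity condition on the integrals $I_i=\int_{x_l}^{x_r}g(s,y_i)\,ds$, and both use the HBC fact $I_0=I_n=0$ to force every $I_i=0$. The only cosmetic difference is that the paper then closes the argument by citing the equivalence \eqref{eqn4.5}$\Leftrightarrow$\eqref{eqn4.4} from Lemma \ref{lemm4.2}, whereas you verify the two missing boundary identities $i=0,n$ directly, which amounts to the same computation.
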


\begin{proof}
The necessity is obvious. In order to prove the sufficiency, we show
that Equation \eqref{eqn4.7} implies that Equation \eqref{eqn4.5}
holds. In fact, suppose
$$I_i = \int_{x_l}^{x_r} g(s,y_{i})ds, i=0,1,\ldots,n. $$
Then $I_0 = I_n= 0$. Our object is to prove that, for any
$i=1,2,\ldots,n-1$, $I_{i}=0$. Taking the similar deduction in
Equation \eqref{eqn4.6}, one gets
\begin{align*} \int_{x_l}^{x_r} g(s,y_{i})ds &
=\frac{y_{i+1}-y_{i}}{y_{i+1}-y_{i-1}} \int_{x_l}^{x_r}
g(s,y_{i-1})+\frac{y_{i}-y_{i-1}}{y_{i+1}-y_{i-1}} \int_{x_l}^{x_r}
g(s,y_{i+1})ds,
\end{align*}
i.e., $$I_i =\frac{y_{i+1}-y_{i}}{y_{i+1}-y_{i-1}}
I_{i-1}+\frac{y_{i}-y_{i-1}}{y_{i+1}-y_{i-1}} I_{i+1} $$ holds for
any $i=1,2,\ldots,n-1$. Hence the point set $\{(y_i,I_i)\}_{i=0}^n$
is collinear, which means that there exists a linear function $f(y)$
such that $f(y_i) = I_i$, $i=0, \ldots, n$. Since $y_0 = y_n = 0$,
it follows that $f(y) \equiv 0$, which means that $I_i=0$,
$i=1,2,\ldots,n$. Then the sufficiency is proved.
\end{proof}

\subsection{A lower bound of dimensions}
\label{sec4.2}

For a given regular T-mesh $\mathscr{T}$, which has $E$ interior
l-edges, we assume its associated tensor-product mesh
$\mathscr{T}^c$ has $E'$ interior l-edges, where $E' \leqslant E$.
Suppose the horizontal l-edges $l_1, \ldots, l_k$ in $\mathscr{T}$
lie on the horizontal l-edge $l$ in $\mathscr{T}^c$, where the
vertical coordinate of $l$ is $\bar{y}$. Then one gets
$$ \int_{l_i} \frac{\partial}{\partial y}g(s, \bar{y}-)ds = \int_{l_i}  \frac{\partial}{\partial y}
g(s, \bar{y}+)ds, \quad i=1, \ldots, k$$ is equivalent with
$$ \int_{l_i}  \frac{\partial}{\partial y} g(s, \bar{y}-)ds = \int_{l_i}  \frac{\partial}{\partial y}
g(s, \bar{y}+)ds, \quad i=1,\ldots, k-1, \quad
\int_{l}\frac{\partial}{\partial y}  g(s, \bar{y}-)ds = \int_{l}
\frac{\partial}{\partial y} g(s, \bar{y}+)ds. $$ According to Lemma
\ref{lemm4.2}, the constraints, along all the interior horizontal
l-edges and top/bottom boundary l-edges in $\mathscr{T}^c$, $$
\int_{l}\frac{\partial}{\partial y}  g(s, \bar{y}-)ds = \int_{l}
\frac{\partial}{\partial y} g(s, \bar{y}+)ds
$$
is equivalent with the integrations of $g$ along all the interior
horizontal l-edges in $\mathscr{T}^c$ are zero:
$$\int_l g(s,\bar{y}) ds = 0.$$
The similar conclusion can be made for vertical l-edges. Therefore
the number of the sufficient and necessary constraints that ensure
$\mathcal{I}(g) \in \overline{\mathbf{S}}(2,2,1,1,\mathscr{T})$ for
$g \in \overline{\mathbf{S}}(1,1,0,0,\mathscr{T})$ is just $E$.

Moreover, in the tensor-product mesh $\mathscr{T}^c$, if the
horizontal knots are $x_0<x_1<\cdots<x_m$, and the vertical knots
are $y_0< y_1 < \cdots < y_n$, then we have
\begin{equation} \label{lll}
\int_{-\infty}^{+\infty}\int_{-\infty}^{+\infty} g(s,t)dsdt =
\sum_{i=1}^{m-1}(x_{i+1}-x_{i-1}) E_i =
\sum_{j=1}^{n-1}(y_{j+1}-y_{j-1}) F_j, \end{equation}where
$$
E_i = \int_{y_0}^{y_n} g(x_i,y)dy, \quad F_j = \int_{x_0}^{x_m}
g(x,y_j)dx.
$$
Hence, among the former $E$ constraints, the element number in any
maximal linearly independent subset is at most $E-1$ when these
constraints are not empty (i.e., $V^+>0$).

Based on these analysis, a lower bound of dimensions of spline
spaces $\mathbf{S}(2,2,1,1,\mathscr{T})$ can be proposed as follows:
\begin{theorem}
\label{Dim2211} Given a regular T-mesh $\mathscr{T}$ with $V^+ > 0$
crossing vertices and $E$ interior l-edges, it follows that
$$
\dim \overline{\mathbf{S}}(2,2,1,1,\mathscr{T}) \geqslant V^+-E+1.
$$
\end{theorem}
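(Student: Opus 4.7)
The plan is to deploy the injective linear embedding $\mathcal{D}: \overline{\mathbf{S}}(2,2,1,1,\mathscr{T}) \hookrightarrow \overline{\mathbf{S}}(1,1,0,0,\mathscr{T})$ from Section 3.1 and invoke the codimension lower bound \eqref{lb}. By Theorem \ref{thm3.4} the ambient space has dimension $V^+$, so the entire problem collapses to producing a sufficiently tight upper bound on the rank of the linear constraints on $g \in \overline{\mathbf{S}}(1,1,0,0,\mathscr{T})$ that are necessary and sufficient for $\mathcal{I}(g)$ to lie in $\overline{\mathbf{S}}(2,2,1,1,\mathscr{T})$.

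First I would assemble the lemmas of Section 4.1. Lemma \ref{lemm4.1} supplies one jump-integral condition per l-edge of $\mathscr{T}$, that is $E+4$ conditions in total. Lemma \ref{lemm4.3} eliminates the four boundary l-edges outright, and the reduction at the start of Section \ref{sec4.2} (together with Lemma \ref{lemm4.2}) rewrites what remains as an equivalent system of exactly $E$ constraints: one vanishing line-integral condition $\int_{l} g\,ds = 0$ for every interior l-edge of the tensor-product closure $\mathscr{T}^c$, plus one jump-integral condition for every interior l-edge of $\mathscr{T}$ that is a proper sub-segment of some l-edge of $\mathscr{T}^c$.

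Next I would extract one nontrivial linear relation among these $E$ functionals from the integral identity \eqref{lll}: writing $\iint g$ two different ways as weighted sums of line integrals of $g$ along the interior horizontal respectively interior vertical l-edges of $\mathscr{T}^c$ yields
$$\sum_{i=1}^{m-1}(x_{i+1}-x_{i-1})\,E_i \;-\; \sum_{j=1}^{n-1}(y_{j+1}-y_{j-1})\,F_j \;=\; 0$$
as an identity on all of $\overline{\mathbf{S}}(1,1,0,0,\mathscr{T})$. Each $E_i$ and $F_j$ is, up to sign, one of the $E$ constraint functionals assembled in the previous step, so this furnishes a genuine linear dependence and forces the rank of the constraint system to be at most $E-1$. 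Applying \eqref{lb} then delivers
$$\dim \overline{\mathbf{S}}(2,2,1,1,\mathscr{T}) \;\geqslant\; \dim \overline{\mathbf{S}}(1,1,0,0,\mathscr{T}) - (E-1) \;=\; V^+ - E + 1.$$

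The only subtle point, and the main place where care is required, is to verify that the relation supplied by \eqref{lll} is genuinely nonzero as a linear combination of the constraint functionals rather than a vacuous $0 = 0$; this is exactly where the hypothesis $V^+ > 0$ enters the argument, since it guarantees that $\mathscr{T}^c$ has enough interior l-edges for the weighted sums in \eqref{lll} to be non-empty and for the line-integral functionals involved to be nontrivial on $\overline{\mathbf{S}}(1,1,0,0,\mathscr{T})$. Apart from this bookkeeping, the proof is a clean synthesis of the preceding lemmas.
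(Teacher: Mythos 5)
Your proposal follows essentially the same route as the paper's own proof: embed via $\mathcal{D}$ into $\overline{\mathbf{S}}(1,1,0,0,\mathscr{T})$ of dimension $V^+$ (Theorem \ref{thm3.4}), reduce the conditions of Lemma \ref{lemm4.1} through Lemmas \ref{lemm4.2} and \ref{lemm4.3} and the $\mathscr{T}^c$ reduction of Subsection \ref{sec4.2} to a system of $E$ constraints, extract one nontrivial dependence from the identity \eqref{lll} (using $V^+>0$) so the rank is at most $E-1$, and conclude via \eqref{lb}. The only quibble is a small bookkeeping slip in your inventory of the $E$ constraints: for an l-edge of $\mathscr{T}^c$ split into $k$ sub-l-edges of $\mathscr{T}$, the paper retains $k-1$ sub-segment jump conditions plus the one full-line vanishing integral, not a jump condition for every proper sub-segment; this does not affect the count you actually use or the final bound.
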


\begin{proof}
Since $V^+>0$, the constraints are not empty, and $E>1$. According
the former analysis and Theorem \ref{thm3.4}, the dimension of
$\overline{\mathbf{S}}(1,1,0,0,\mathscr{T})$ is $V^+$. For any $g
\in \overline{\mathbf{S}}(1,1,0,0,\mathscr{T})$, the constraints
ensuring $\mathcal{I}(g) \in
\overline{\mathbf{S}}(2,2,1,1,\mathscr{T})$ have maximal linearly
dependent subsets with element number at most $E-1$. Here both
$\mathcal{D}$ and $\mathcal{I}$ are linearly injective. Therefore
$\dim \overline{\mathbf{S}}(2,2,1,1,\mathscr{T}) \geqslant V^+-E+1$.
\end{proof}

The lower bound in the theorem can be reached for spline spaces over
some T-meshes. For example, consider the T-mesh $\mathscr{T}$ as
shown in Figure \ref{TExpand}. The B-net method in \cite{Deng06} can
show that $\dim \overline{\mathbf{S}}(2,2,1,1,\mathscr{T}) = 1$. On
the other hand, in $\mathscr{T}$, $V^+=6$, $E=6$. Then $V^+-E+1=1$,
which means that the lower bound is reached in this T-mesh.

Furthermore, it is easy to verify that, for tensor-product meshes,
the former lower bound is exactly the same as dimensions of
biquadratic spline spaces over the meshes, if $V^+-E +1 \geqslant
0$. In the next section, we will prove that, for hierarchical
T-meshes, the lower bound can be reached in some cases as well.

%\noindent \textbf{Remark}: In order to obtain a more compact lower
%bound, a possible
%为了得到更紧致的下界，我们需要对``有用的''内部十字点和大边进行筛选。目前我们已经知道，如果
%某大边上没有任何内部顶点，那么这条边不需要计数，因为在这条边上，对于任意
%$\overline{\mathbf{S}}(1,1,0,0,\mathscr{T})$中函数$g$,
%$g$在这条边的两侧是同一个双线性函数，从而保证$\mathcal{I}(g) \in
%\overline{\mathbf{S}}(2,2,1,1,\mathscr{T})$的相应约束退化为零。但是其它情形下的
%结论还不是很清楚，有待进一步研究。有例子表明，即使一条边没有十字点，有时也需要计数。

\section{Dimensions of Spline Spaces $\overline{\mathbf{S}}(2,2,1,1,\mathscr{T})$ over Hierarchical T-meshes}
\label{ht}

In this section, a careful analysis on the constraints in Section
\ref{sec4} will help us to build a dimension formula of the spline
space $\overline{\mathbf{S}}(2,2,1,1,\mathscr{T})$ over a
hierarchical T-mesh. Here the key procedure consists of the
following components:
\begin{enumerate}
\item A general hierarchical T-mesh is divided into some
so-called crossing-vertex-connected branches (Definition \ref{cvc}).
Then the spline space over the hierarchical T-mesh can be divided
into the direct sum of some subspaces, each of which is defined over
a crossing-vertex-connected hierarchical branch, which is a T-mesh
as well. See Subsection \ref{gdt}.

\item In a crossing-vertex-connected hierarchical T-mesh, the
constraint set is proved to with defective rank exact one by the
following processing:

\begin{enumerate}
\item The constraints are converted into a new form to reflect the
level structure of the hierarchical T-mesh. See Subsection
\ref{sec5.2}.

\item A new set of basis functions of $\overline{\mathbf{S}}(1,1,0,0,\mathscr{T})$
is defined according the structure of the T-mesh, such that the
occurrence of the basis function coefficients in the constraints is
regularized. See Subsection \ref{hbf} and Proposition \ref{prop5.6}.

\item All the l-edges and the corresponding constraints are ordered
according to the structure of the T-mesh. For each of the l-edges or
the constraints, a characteristic vertex is introduced. Write all
the constraints in a vector form $(C_0,C_1, \ldots, C_T)^T$ in the
increasing order. The constraint $C_0$ can be removed since we have
known all the constraints are with defective reank at least one.
Hence we need to shown that $(C_1, \ldots, C_T)^T$ is with full
rank. See Subsection \ref{ordering}.

\item Assume the characteristic vertex of $C_i$ is $V_i$, $i=1,\ldots,
T$. Arrange the coefficients into a vector $(\beta_1,\ldots,
\beta_T, \beta_{T+1}, \ldots, \beta_M)^T$ such that $\beta_i$ is the
coefficient of the basis function, corresponding to $V_i$,
$i=1,2,\ldots,T$, in $\overline{\mathbf{S}}(1,1,0,0,\mathscr{T})$.

\item
Then there exists a matrix $\mathbf{M} = (m_{ij})_{T\times M}$ such
that $(C_1, \ldots, C_T)^T = \mathbf{M} (\beta_1,\ldots,
\beta_M)^T$. It can be shown that $m_{ij}=0$, $i>j$, and $m_{ii}
\neq 0$ for any $i$ and $j$.
\end{enumerate}
Hence the set of all the constraints is with defective rank one.
\end{enumerate}

\subsection{Hierarchical T-meshes}
\begin{figure}[!ht]
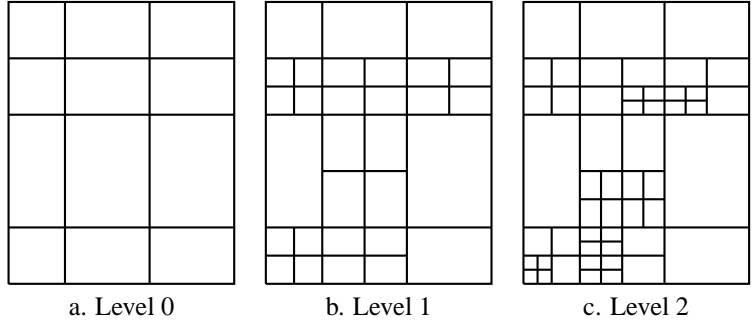

\begin{center}
\psset{unit=0.75cm,linewidth=0.8pt}
\begin{tabular}{ccc}
\pspicture(0,0)(4,5) \psline(0,0)(4,0)(4,5)(0,5)(0,0)
\psline(0,1)(4,1)\psline(0,3)(4,3)\psline(0,4)(4,4)
\psline(1,0)(1,5)\psline(2.5,0)(2.5,5)
\endpspicture &
\pspicture(0,0)(4,5)
\psline(0,0)(4,0)(4,5)(0,5)(0,0)\psline(0,1)(4,1)\psline(0,3)(4,3)\psline(0,4)(4,4)
\psline(1,0)(1,5)\psline(2.5,0)(2.5,5)\psline(1.75,0)(1.75,4)
\psline(0,0.5)(2.5,0.5)\psline(0.5,0)(0.5,1)\psline(2.5,2)(1,2)\psline(0,3.5)(4,3.5)
\psline(0.5,3)(0.5,4)\psline(3.25,3)(3.25,4)
\endpspicture &\pspicture(0,0)(4,5) \psline(0,0)(4,0)(4,5)(0,5)(0,0)
\psline(0,1)(4,1)\psline(0,3)(4,3)\psline(0,4)(4,4)
\psline(1,0)(1,5)\psline(2.5,0)(2.5,5)\psline(1.75,0)(1.75,4)
\psline(0,0.5)(2.5,0.5)\psline(0.5,0)(0.5,1)\psline(2.5,2)(1,2)\psline(0,3.5)(4,3.5)
\psline(0.5,3)(0.5,4)\psline(3.25,3)(3.25,4)\psline(0,0.25)(0.5,0.25)
\psline(0.25,0)(0.25,0.5)\psline(1.375,0)(1.375,2)\psline(1,0.25)(1.75,0.25)
\psline(1,0.75)(1.75,0.75)\psline(1,1.5)(2.5,1.5)\psline(2.125,1)(2.125,2)
\psline(2.125,3)(2.125,3.5)\psline(1.75,3.25)(3.25,3.25)\psline(2.875,3)(2.875,3.5)
\endpspicture \\
a. Level 0  & b. Level 1  & c. Level 2
\end{tabular}
\caption{A hierarchical T-mesh \label{HTmesh}}
\end{center}
\end{figure}
A hierarchical T-mesh \cite{Deng07} is a special type of T-mesh
which has a natural level structure. It is defined in a recursive
fashion. One generally starts from a tensor-product mesh (level
$0$). From level $k$ to level $k+1$, one subdivide a cell at level
$k$ into four subcells which are cells at level $k+1$. For
simplicity, we subdivide each cell by connecting the middle points
of the opposite edges with two straight lines. Figure \ref{HTmesh}
illustrates the process of generating a hierarchical T-mesh. For a
hierarchical T-mesh $\mathscr{T}$, in order to emphasis its level
structure in some cases, we denote the T-mesh of level $k$ to be
$\mathscr{T}^k$.

For a given hierarchical T-mesh $\mathscr{T}$, we can extend it to
obtain an extended T-mesh $\mathscr{T}^\varepsilon$ associated with
the spline space $\mathbf{S}(2,2,1,1,\mathscr{T})$. In the following
text, hierarchical T-meshes refer to both the classical hierarchical
T-meshes and their extension.

Over a hierarchical T-mesh $\mathscr{T}$, the dimension of
$\overline{\mathbf{S}}(2,2,1,1,\mathscr{T})$ may be greater than the
lower bound in Theorem \ref{Dim2211}. For example, consider the
hierarchical T-mesh as shown in Figure \ref{exam1}, where the mesh
of level $0$ is a tensor-product mesh with size $3 \times 3$, and in
the mesh of level 1, there exists only one cell that is subdivided.
In this mesh, $V^+=5$, $E=6$. The lower bound is $V^+-E+1=0$. But
the dimension of the biquadratic spline space over the mesh is at
least one obviously.
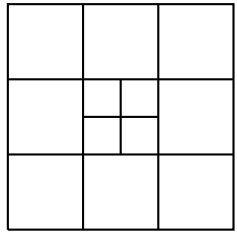
\begin{figure}[!htp]
\begin{center}
\begin{pspicture}(0,0)(3,3)
\psline(0,0)(3,0)(3,3)(0,3)(0,0)\psline(1,0)(1,3)\psline(2,0)(2,3)\psline(0,1)(3,1)\psline(0,2)(3,2)
\psline(1,1.5)(2,1.5)\psline(1.5,1)(1.5,2)
\end{pspicture}
\caption{\label{exam1}A hierarchical T-mesh where the dimension is
greater than the lower bound}
\end{center}
\end{figure}

%\noindent{\textbf{Remark}: } Considering the conditions in Theorem
%\ref{Dim2211}, we require that $V^+>0$ in the hierarchical T-mesh.
%Hence in the latter analysis, we do not consider that case that $V^+
%- E <0$.

\subsubsection{Crossing-vertex connected}

In order to ensure the dimensions of spline spaces over hierarchical
T-meshes reach the former lower bound, we need to focus on a special
type of hierarchical T-meshes.

\begin{definition}
\label{cvc} For a regular T-mesh, if, between any two different
crossing vertices, there exists a continuous poly-line, which
consists of edges in the mesh, such that every joint between two
neighboring horizontal and vertical edges on the poly-line is a
crossing vertex in the mesh, then the T-mesh is called
\textbf{crossing-vertex connected}. Such poly-lines are called
\textbf{paths} between the crossing-vertices.
\end{definition}

\begin{figure}[!ht]
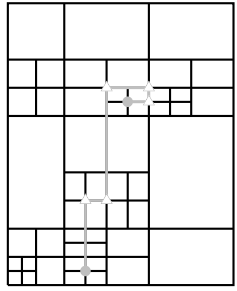

\begin{center}
\psset{unit=0.75cm,linewidth=0.8pt} \pspicture(0,0)(4,5)
\psline(0,0)(4,0)(4,5)(0,5)(0,0)
\psline(0,1)(4,1)\psline(0,3)(4,3)\psline(0,4)(4,4)
\psline(1,0)(1,5)\psline(2.5,0)(2.5,5)\psline(1.75,0)(1.75,4)
\psline(0,0.5)(2.5,0.5)\psline(0.5,0)(0.5,1)\psline(2.5,2)(1,2)\psline(0,3.5)(4,3.5)
\psline(0.5,3)(0.5,4)\psline(3.25,3)(3.25,4)\psline(0,0.25)(0.5,0.25)
\psline(0.25,0)(0.25,0.5)\psline(1.375,0)(1.375,2)\psline(1,0.25)(1.75,0.25)
\psline(1,0.75)(1.75,0.75)\psline(1,1.5)(2.5,1.5)\psline(2.125,1)(2.125,2)
\psline(2.125,3)(2.125,3.5)\psline(1.75,3.25)(3.25,3.25)\psline(2.875,3)(2.875,3.5)
\psset{linewidth=1pt,linecolor=lightgray}\psline(1.375,0.25)(1.375,1.5)(1.75,1.5)%
(1.75,3.5)(2.5,3.5)(2.5,3.25)(2.125,3.25)
\psdots(1.375,0.25)(2.125,3.25) \psdots[dotstyle=triangle](1.375,1.5)(1.75,1.5)%
(1.75,3.5)(2.5,3.5)(2.5,3.25)
\endpspicture
\caption{A hierarchical T-mesh with a path between two connected
crossing vertices\label{HTmesh1}}
\end{center}
\end{figure}

For example, two crossing vertices, labeled with light-gray dots,
are selected in the T-mesh as shown in Figure \ref{HTmesh1}. A path
between these two vertices is illustrated in light-gray as well,
where the joints are shown with light-gray triangles.

\begin{definition}
From level $k$ to level $k+1$ as forming a hierarchical T-mesh, if
there exists a cell of level $k$ to be subdivided, but all its
horizontal and vertical neighboring cells of level $k$ remain
unchanged, then the cell is called an \textbf{isolated subdivided
cell}.
\end{definition}
For a hierarchical T-mesh, it is crossing-vertex connected if and
only if, in any level of forming the hierarchical T-mesh, there do
not exist any isolated subdivided cells.

In Subsections \ref{sec5.2}--\ref{sec5.5}, we will prove that the
lower bound of dimension proposed in Theorem \ref{Dim2211} is
exactly the dimension of
$\overline{\mathbf{S}}(2,2,1,1,\mathscr{T})$ over a crossing-vertex
connected hierarchical T-mesh. Then in Subsection \ref{gdt}, a
dimension theorem can be proposed to calculate the dimension of the
spline space over a general hierarchical T-mesh by dividing it into
the union of some crossing-vertex connected hierarchical branches.

\subsubsection{Level numbers of edges, l-edges, and crossing-vertices}

\begin{figure}[!htb]
\begin{center}
\setlength{\unitlength}{1cm}
\begin{pspicture}(0,0)(6,3.5)
\psline(0.5,1)(5.5,1)\psline(1,2)(5,2) \psline(0.5,3)(5.5,3)
\psline(1,0.5)(1,3.5)\psline(2,1)(2,3)\psline(3,0.5)(3,3.5)\psline(4,1)(4,3)\psline(5,0.5)(5,3.5)
\rput[r](0.3,1){\small edge of level $p_2$}\rput[r](0.6,2){\small
edge of level $p_0$}\rput[r](0.3,3){\small edge of level $p_1$}
\rput(2,0.3){\small edge of}\rput(3.3,0.3){\small edge of}
\rput(2,0){\small level $p_0$}\rput(3.3,0){\small level $p_1$}
\psline{->}(2,0.4)(2,0.9) \psdot(2,2)\rput(1.7,1.8){$v_1$}
\psdot(3,2)\rput(2.8,1.8){$v_2$}
\end{pspicture}
\caption{Level numbers of edges, l-edges, and crossing vertices
\label{BEdgeL}}
\end{center}
\end{figure}
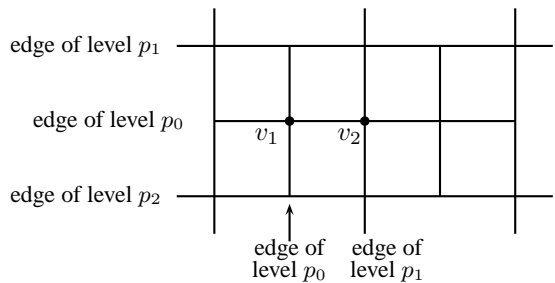
We assign every edge or l-edge in a hierarchical T-mesh with a level
number as same as the level number of the T-mesh where the edge or
l-edge just appears. An l-edge of level $k$ consists of edges of
level $k$. The extension of an edge in the extended T-mesh is
assigned with the same level as its source. The new adding l-edges
in the extended T-mesh are assigned with level 0.

A crossing vertex is assigned with two level numbers, denoted as
$(k_h,k_v)$, corresponding to the level number $k_h$ of the
horizontal l-edge and the level number $k_v$ of the vertical l-edge
where the vertex lies, respectively. $k_h$ and $k_v$ are called
horizontal level and vertical level, respectively, of the crossing
vertex.

For example, in the hierarchical T-mesh $\mathscr{T}$ as shown in
Figure \ref{BEdgeL}, suppose the middle horizontal l-edge is with
level $p_0$, and its upper and lower neighboring l-edges are with
levels $p_1$ and $p_2$, respectively. Here it should be $p_0>p_1$,
$p_0>p_2$. Because $v_1$ is the intersection of two l-edges with
level $p_0$, its level is $(p_0,p_0)$. $v_2$ is the intersection of
a horizontal l-edge of level $p_0$ and a vertical l-edge of level
$p_1$. Hence its level is $(p_0,p_1)$.

\subsubsection{An ordering on interior l-edges}
\label{ordering}

In order to sort the constraints reasonably, we introduce a partial
ordering on interior l-edges in a hierarchical T-mesh. This order
will be used in the proof of Theorem \ref{thm} to facilitate the
rank analysis of the constraints. Before that, we propose the
following definition.

\begin{definition}
\label{cvc1} In a hierarchical T-mesh $\mathscr{T}$, two interior
l-edges are \textbf{continuous} if they intersect in a crossing
vertex of $\mathscr{T}$. An l-edge set $S$ is \textbf{connected} if,
for any two l-edges $\ell_0$ and $\ell_1$ in $S$, there exists a
\textbf{continuous series of l-edges} $l_0, l_1, \ldots, l_k$ in $S$
between $\ell_0$ and $\ell_1$, i.e., $l_0 = \ell_0$, $l_k = \ell_1$,
and $l_i$ and $l_{i+1}$ are continuous for any $i=0, 1, \ldots,
k-1$.
\end{definition}

\begin{figure}[!ht]
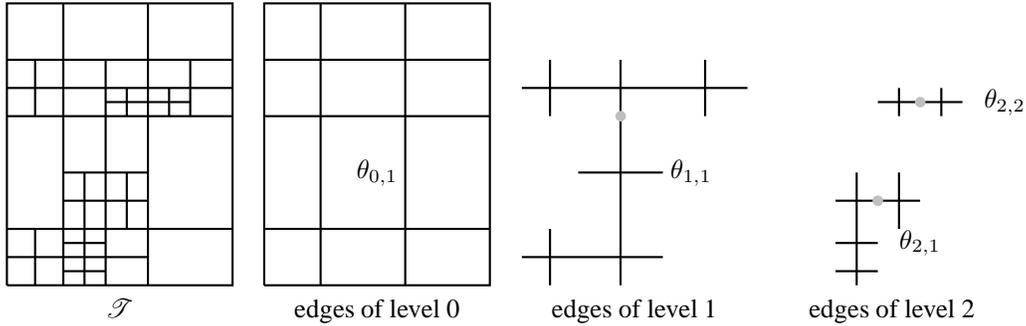

\begin{center}
\psset{unit=0.75cm,linewidth=0.8pt}
\begin{tabular}{cccc}
\pspicture(0,0)(4,5) \psline(0,0)(4,0)(4,5)(0,5)(0,0)
\psline(0,1)(4,1)\psline(0,3)(4,3)\psline(0,4)(4,4)
\psline(1,0)(1,5)\psline(2.5,0)(2.5,5)\psline(1.75,0)(1.75,4)
\psline(0,0.5)(2.5,0.5)\psline(0.5,0)(0.5,1)\psline(2.5,2)(1,2)\psline(0,3.5)(4,3.5)
\psline(0.5,3)(0.5,4)\psline(3.25,3)(3.25,4)
%\psline(0,0.25)(0.5,0.25)
%\psline(0.25,0)(0.25,0.5)
\psline(1.375,0)(1.375,2)\psline(1,0.25)(1.75,0.25)
\psline(1,0.75)(1.75,0.75)\psline(1,1.5)(2.5,1.5)\psline(2.125,1)(2.125,2)
\psline(2.125,3)(2.125,3.5)
\psline(1.75,3.25)(3.25,3.25)\psline(2.875,3)(2.875,3.5)
\endpspicture
& \pspicture(0,0)(4,5) \psline(0,0)(4,0)(4,5)(0,5)(0,0)
\psline(0,1)(4,1)\psline(0,3)(4,3)\psline(0,4)(4,4)
\psline(1,0)(1,5) \psline(2.5,0)(2.5,5) \rput(2,2){$\theta_{0,1}$}
\endpspicture
& \pspicture(0,0)(4,5) \psline(1.75,0)(1.75,4)
\psline(0,0.5)(2.5,0.5)\psline(0.5,0)(0.5,1)\psline(2.5,2)(1,2)\psline(0,3.5)(4,3.5)
\psline(0.5,3)(0.5,4)\psline(3.25,3)(3.25,4)
\rput(3,2){$\theta_{1,1}$} \psset{linewidth=1pt,linecolor=lightgray}
\psdots(1.75,3)%\psline(0,3)(4,3)
\endpspicture
& \pspicture(0,0)(4,5)
%\psline(0,0.25)(0.5,0.25)
%\psline(0.25,0)(0.25,0.5)
\psline(1.375,0)(1.375,2)\psline(1,0.25)(1.75,0.25)
\psline(1,0.75)(1.75,0.75)\psline(1,1.5)(2.5,1.5)\psline(2.125,1)(2.125,2)
\psline(2.125,3)(2.125,3.5)
\psline(1.75,3.25)(3.25,3.25)\psline(2.875,3)(2.875,3.5)
\rput(2.5,0.75){$\theta_{2,1}$}\rput(4,3.25){$\theta_{2,2}$}
\psset{linewidth=1pt,linecolor=lightgray}
\psdots(2.5,3.25)(1.75,1.5)
\endpspicture \\
$\mathscr{T}$ & edges of level 0  & edges of level 1 & edges of
level 2
\end{tabular}
\caption{A hierarchical T-mesh with its decomposition according to
the edge levels \label{order}}
\end{center}
\end{figure}

Consider a crossing-vertex connected hierarchical T-mesh
$\mathscr{T}$. Fix a level number $k \geqslant 0$. Then all the
l-edges with level $k$ is possibly not connected. See Figure
\ref{order} for an example, where all the edges of level 2 are not
connected. We assume that they form some maximal connected branches.
Denote these branches to be $\theta_{k,i}$, $i=1,\ldots, T_k$. Hence
in the example of Figure \ref{order}, we have $T_0=1$, $T_1=1$, and
$T_2=2$. Because $\mathscr{T}$ is crossing-vertex connected, when
$k>1$, there exists at least one crossing vertex with level number
$(k,j)$ or $(j,k)$ on some l-edge in the branch $\theta_{k,i}$,
where $j < k$. See Figure \ref{order} for examples, where the
specified crossing vertex is shown in light-gray. This crossing
vertex is the intersection between two l-edges of level $k$ and $j$,
where the l-edge of level $j$ is called an \textbf{entering l-edge}
of the branch, which is connected with every l-edge in
$\theta_{k,i}$. The entering l-edge of all the l-edges of level zero
is defined to be any l-edge of level zero.

Fixed an entering l-edge $\ell_{0}$ of $\theta_{k,i}$. For an l-edge
$\ell$ in $\theta_{k,i}$, there exists many continuous series of
l-edges in $\theta_{k,i}$ connecting $\ell$ and $\ell_0$. The number
of the l-edges in a series is called the length of the series, and
the minimal length of all the series connecting $\ell$ and $\ell_0$
is called the \textbf{distance between $\ell$ and $\ell_0$}, denoted
as $\dist(\ell,\ell_0)$. Suppose $e_0 = \ell_0, e_1, \ldots, e_s =
\ell$ is an l-edge series with the minimal length among the
continuous series connecting $\ell$ and $\ell_0$. Then the
intersection point between $e_s = \ell$ and $e_{s-1}$ is a crossing
vertex on $\ell$, which is called a \textbf{characteristic vertex of
the l-edge} $\ell$. Its level is $(k,j)$ or $(j,k)$, where $j
\leqslant k$.
%In a hierarchical T-mesh,
%characteristic vertices of l-edges must be with level $(k,k)$,
%$(k-1,k)$ or $(k,k-1)$.

After having selected the entering l-edges for all the connected
branches $\theta_{k,i}$, we introduce a partial ordering $<_1$ on
all the interior l-edges in $\mathscr{T}$. For any two interior
l-edges $\ell_1$ and $\ell_2$ with levels $k_1$ and $k_2$,
respectively, where $\ell_j$ is in the branch $\theta_{k_j, i_j}$,
$j=1,2$, we define $\ell_1 <_1 \ell_2$ if
\begin{enumerate}
\item $k_1 < k_2$, or
\item $k_1 = k_2$ and $i_1 < i_2$, or
\item $k_1 = k_2$ and $i_1 = i_2$, $\dist(\ell_1,\ell_0) < \dist(\ell_2, \ell_0)$,
where $\ell_0$ is the entering l-edge of the connected branch in
which $\ell_1$ and $\ell_2$ lies, since these two l-edges are in the
same connected branch.
\end{enumerate}
This order is not total, because, in Case 3, it is possible that
$\dist(\ell_1,\ell_0) = \dist(\ell_2, \ell_0)$ for two different
interior l-edges.

\subsection{Conversion of constraints}
\label{sec5.2}

In Section \ref{sec4} we have proposed some different versions of
the necessary and sufficient conditions that ensure $\mathcal{I}(g)
\in \overline{\mathbf{S}}(2,2,1,1,\mathscr{T})$ for any $g \in
\overline{\mathbf{S}}(1,1,0,0,\mathscr{T})$. In order to facilitate
the latter analysis, we take the following notation and the
constraints.

For any regular T-mesh $\mathscr{T}$, denote its occupied rectangle
to be $(x_l, x_r)\times (y_b, y_t)$. $\mathscr{T}^c$ is an
associated tensor-product mesh with $\mathscr{T}$. Assume the $y$
coordinates of all the horizontal l-edges in $\mathscr{T}^c$ to be
$y_b=y_0<y_1<\cdots<y_m<y_{m+1}=y_t$, and the $x$ coordinates of all
the vertical l-edges in $\mathscr{T}^c$ to be $x_l = x_0 < x_1 <
\cdots < x_n < x_{n+1}=x_r$. For $i=0,1,\ldots, m+1$, assume the
horizontal l-edges $l^h_{i1}, \ldots, l^h_{i\alpha_i}$ are with $y$
coordinate $y=y_i$. Here $l^h_{i1}, \ldots, l^h_{i\alpha_i}$ are
sorted from left to right. On the other hand, for $j=0,1,\ldots,
n+1$, assume the vertical l-edges $l^v_{j1}, \ldots, l^v_{j\beta_j}$
are with $x$ coordinate $x= x_j$. Here $l^v_{j1}, \ldots,
l^v_{j\beta_j}$ are sorted from bottom to top. It follows that
$\alpha_0=\alpha_{m+1}=\beta_0=\beta_{n+1}=1$,
$\alpha_1+\cdots+\alpha_{m}+ \beta_1+\cdots+\beta_n=E$.

With these notation, according to Lemma \ref{lemm4.3}, we can
allocate the $E$ constraints to the interior l-edges in
$\mathscr{T}^c$ in the following fashion: To any interior horizontal
l-edge $y=y_i$, the corresponding constraints are
\begin{align} & \int_{l^h_{ik}} \frac{\partial}{\partial y}
g(s,y_i-) ds = \int_{l^h_{ik}} \frac{\partial}{\partial y} g(s,y_i+)
ds, \quad k=1,\ldots,\alpha_i. \label{eqn5.1}
\end{align}
To any interior vertical l-edge $x=x_j$, the corresponding
constraints are
\begin{align} &\int_{l^v_{jk}} \frac{\partial}{\partial x} g(x_j-,t)
dt = \int_{l^v_{jk}} \frac{\partial}{\partial x} g(x_j+,t) dt, \quad
k=1,\ldots,\beta_j. \label{eqn5.2}
\end{align}
Furthermore, by applying the similar deduction with Equation
\eqref{eqn4.6}, Equations \eqref{eqn5.1} and \eqref{eqn5.2} are
equivalent with
\begin{align}
(y_{i+1}-y_{i-1})\int_{x^h_{ik0}}^{x^h_{ik1}} g(s,y_i)ds =
(y_{i+1}-y_i)\int_{x^h_{ik0}}^{x^h_{ik1}} g(s,y_{i-1})ds &{}+
(y_i-y_{i-1})
\int_{x^h_{ik0}}^{x^h_{ik1}} g(s,y_{i+1})ds, \nonumber\\
& k=1,\ldots,\alpha_i, \label{eqn5.3}
 \\
(x_{j+1}-x_{j-1})\int_{y^v_{jk0}}^{y^v_{jk1}} g(x_j,t)dt =
(x_{j+1}-x_j)\int_{y^v_{jk0}}^{y^v_{jk1}} g(x_{j-1},t)dt &{}+
(x_j-x_{j-1})
\int_{y^v_{jk0}}^{y^v_{jk1}} g(x_{j+1},t)dt,\nonumber  \\
&  k=1,\ldots,\beta_j, \label{eqn5.4}
\end{align}
respectively, where $x^h_{ik0}$ and $x^h_{ik1}$ are the $x$
coordinates of the two end-points of $l^h_{ik}$, and $y^v_{jk0}$ and
$y^v_{jk1}$ are the $y$ coordinates of the two end-points of
$l^v_{jk}$.

Now we focus on hierarchical T-meshes, on which the corresponding
constraints can be converted into a form to reflect the level
structure of the T-mesh. At first, we introduce two definitions.

In a hierarchical T-mesh $\mathscr{T}$, select any horizontal l-edge
$l$ with level $k>0$. Hence $l$ appears in $\mathscr{T}$ since
$\mathscr{T}^k$. On $l$, there exists one or more crossing vertices
with vertical level $k$. These crossing vertices are the center of
some inserted crossing from $\mathscr{T}^{k-1}$ to $\mathscr{T}^k$.
The l-edge $l$ consists of the horizontal edges of these inserted
crossing. It follows that the vertical edges of the inserted
crossing intersect with two l-edges $l^t$ and $l^b$ in
$\mathcal{T}^{k-1}$. Here we assume $l^b$ with level $k^b$ lies
under $l$, and $l^t$ with level $k^t$ lies above $l$.
\begin{definition}
\label{support} For a horizontal l-edge $l$ with level $k>0$, the
l-edges $l^b$ and $l^t$ are defined as the above description. Then
$l^b$ and $l^t$ are called the \textbf{support l-edges} of $l$. For
an horizontal l-edge $l$ with level 0, its support l-edges are
defined to be the two nearest horizontal l-edges with level 0, which
lie above and under $l$, respectively. For vertical l-edges, the
support l-edges are defined in a similar way.
\end{definition}

It is obvious that, for any horizontal/vertical l-edge $l$, two
vertical/horizontal l-edges, which are through the two end-points of
$l$, intersect with the two support l-edges of $l$, where there is
not other crossing vertex between the intersection points and the
end-points along the two vertical/horizontal l-edges.

\begin{definition}
\label{rank} For the $E$ constraints ensuring $\mathcal{I}(g) \in
\overline{\mathbf{S}}(2,2,1,1,\mathscr{T})$, the number $r$ of its
maximal linearly independent subset is called the \textbf{rank of
the constraints}. Here $E-r$ is called the \textbf{defective rank of
the constraints}.
\end{definition}
In fact, if we specify a set of basis functions of
$\overline{\mathbf{S}}(1,1,0,0,\mathscr{T})$, and undetermine the
coefficients of $g$ with these basis functions, then these
constraints can be represented into the linear combinations of these
coefficients. Hence the rank of the constraints is the rank of the
corresponding coefficient matrix.

In Equations \eqref{eqn5.3} and \eqref{eqn5.4}, a constraint along
an l-edge is represented into the linear combination of three
integrations, which are along the current l-edge and its two
neighboring horizontal/vertical lines, respectively, with the same
integration limits. Here the horizontal/vertical lines share the
same $y$/$x$ coordinates as the two-sided nearest
horizontal/vertical l-edges to the current l-edge. In the following
lemma, we will convert these constraint into a new form, such that
every constraint along an l-edge is a linear combination of three
integration, which are along the current l-edge and its support
l-edges, respectively. With this form, every constraint will involve
undetermined coefficients in a way easy for rank determining.

\begin{lemma}
\label{lemm5.2} Given a hierarchical T-mesh $\mathscr{T}$, select an
interior horizontal l-edge $l^h_i$. Suppose the $y$ coordinate of
$l^h_i$ is $y^h_i$, the $x$ coordinates of its two end-points are
$x^h_{i1}$ and $x^h_{i2}$, and the $y$ coordinates of its support
l-edges are $y^{hb}_i$ and $y^{ht}_i$. Then Equation \eqref{eqn5.3}
holds for all the interior horizontal l-edges if and only if, for
all the interior horizontal l-edges, the following equation holds:
\begin{align}
\label{eqn5.5} (y^{ht}_i - y^{hb}_i)\int_{x^h_{i1}}^{x^h_{i2}}
g(s,y_i^h) ds = (y^{ht}_i - y^{h}_i)\int_{x^h_{i1}}^{x^h_{i2}}
g(s,y_i^{hb}) ds + (y^{t}_i - y^{hb}_i)\int_{x^h_{i1}}^{x^h_{i2}}
g(s,y_i^{ht}) ds.
\end{align}
For interior vertical l-edges, the conclusion is similar.
\end{lemma}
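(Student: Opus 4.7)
The plan is to recast both equations as collinearity statements on the continuous, piecewise-linear function $I(y) := \int_{x^h_{i1}}^{x^h_{i2}} g(s,y)\,ds$: its breakpoints are exactly the $y$-coordinates of horizontal l-edges of $\mathscr{T}$ whose x-extent meets $[x^h_{i1}, x^h_{i2}]$, and $I$ is affine between them (since $g$ is bilinear on each cell). Equation \eqref{eqn5.3} at $l^h_i$ says the three points on the graph of $I$ with abscissae $y_{i-1}, y^h_i, y_{i+1}$ (the immediate $\mathscr{T}^c$-neighbors of $y^h_i$) are collinear, while equation \eqref{eqn5.5} says the same for the abscissae $y^{hb}_i, y^h_i, y^{ht}_i$. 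The goal is to show that the two systems of conditions (ranging over all horizontal l-edges) generate the same linear subspace of constraints on $g$.

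I would proceed by downward induction on the level $k$ of $l^h_i$. For the base case at the maximal level $K$, Definition \ref{support} together with the hierarchical construction rules out any horizontal l-edge other than $l^h_i$ crossing $[x^h_{i1}, x^h_{i2}]$ strictly between $y^{hb}_i$ and $y^{ht}_i$: there are none of level $\leqslant k-1$ by the support property, none of level $k = K$ because subdivided cells in this strip all share mid-$y$ equal to $y^h_i$, and none of level $> K$ by maximality. Hence $I$ has exactly one breakpoint on $(y^{hb}_i, y^{ht}_i)$, namely $y^h_i$, and is affine on each of $[y^{hb}_i, y^h_i]$ and $[y^h_i, y^{ht}_i]$. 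Collinearity of any triple $(y^-, I(y^-))$, $(y^h_i, I(y^h_i))$, $(y^+, I(y^+))$ with $y^- \in [y^{hb}_i, y^h_i)$ and $y^+ \in (y^h_i, y^{ht}_i]$ is then equivalent to matching of the two constant slopes, so \eqref{eqn5.3} and \eqref{eqn5.5} literally coincide.

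For the inductive step at level $k < K$, I would invoke the inductive hypothesis to replace \eqref{eqn5.3} by \eqref{eqn5.5} at every horizontal l-edge of level greater than $k$ that crosses $[x^h_{i1}, x^h_{i2}]$, and then combine these with the cell-wise bilinearity of $g$ to show that $I(y)$ is again affine on each of $[y^{hb}_i, y^h_i]$ and $[y^h_i, y^{ht}_i]$. Once this one-kink-only structure has been recovered, the base-case argument applies verbatim.

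The main technical obstacle is this ``higher-level kinks cancel'' step: a level-$k'$ l-edge $\ell'$ at $y = \eta \in (y^{hb}_i, y^{ht}_i)$ may have x-extent $[a,b]$ that only partially overlaps $[x^h_{i1}, x^h_{i2}]$, so \eqref{eqn5.5} at $\ell'$ controls $\int_a^b g(s,y)\,ds$ rather than $I(y)$ directly. I plan to handle this by an inner induction on the number of higher-level l-edges crossing $[x^h_{i1}, x^h_{i2}]$: decompose $[x^h_{i1}, x^h_{i2}]$ into the x-subintervals cut out by the endpoints of all such $\ell'$, apply the inductive hypothesis piecewise to obtain linearity of $\int_{\alpha}^{\beta} g(s,y)\,ds$ on each sub-piece, and sum. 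The nested nature of the hierarchical subdivisions guarantees both that the decomposition is finite and that the support l-edges invoked at each intermediate step still lie inside $[y^{hb}_i, y^{ht}_i]$, so no constraint from outside the strip ever enters the argument. The vertical-l-edge statement then follows by the symmetric roles of $x$ and $y$.
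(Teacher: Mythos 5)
Your overall framework is the same as the paper's: read both \eqref{eqn5.3} and \eqref{eqn5.5} as collinearity (no-kink) conditions on the piecewise-linear function $I(y)=\int_{x^h_{i1}}^{x^h_{i2}}g(s,y)\,ds$, note that all l-edges crossing the open strip between the support l-edges have level greater than $k$, handle the maximal level as a base case where the two conditions coincide, and run a downward induction on the level (the paper proves \eqref{eqn5.3}$\Rightarrow$\eqref{eqn5.5} directly and the converse by exactly this induction). The gap is at the step you yourself single out as the main obstacle. If a higher-level l-edge $\ell'$ at height $\eta$ really could have $x$-extent $[a,b]$ partially overlapping $[x^h_{i1},x^h_{i2}]$, your inner induction would not repair it: every constraint in the system, in either form, is attached to a whole l-edge and controls only the integral over $[a,b]$ of the jump of $\partial g/\partial y$ across $\eta$; no combination of such constraints says anything about the integral of that jump over the proper subinterval $[x^h_{i1},b]$, and cutting the window at the endpoints of the $\ell'$'s creates no constraints on the resulting subintervals to which an ``inductive hypothesis'' could be applied piecewise. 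So the decomposition-and-sum plan does not go through as stated.

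What actually closes this step -- and what the paper invokes here -- is that partial overlap never occurs: no horizontal l-edge at a height in $(y^{hb}_i,y^{ht}_i)\setminus\{y^h_i\}$ can have an endpoint of $l^h_i$ in the interior of its $x$-projection. Indeed, the level-$(k-1)$ cells subdivided to create $l^h_i$ all have $y$-range $[y^{hb}_i,y^{ht}_i]$, so $\{x^h_{i1}\}\times[y^{hb}_i,y^{ht}_i]$ and $\{x^h_{i2}\}\times[y^{hb}_i,y^{ht}_i]$ are unions of mesh edges; a horizontal edge at an intermediate height reaching such a segment from outside would lie in a descendant of the adjacent level-$(k-1)$ cell, which (same row, same dyadic size) must have $y$-range exactly $[y^{hb}_i,y^{ht}_i]$, and subdividing that cell would prolong $l^h_i$ beyond its endpoint, contradicting maximality of the l-edge; hence that cell is never subdivided and no such edge exists. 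With this fact every intermediate l-edge meeting $(x^h_{i1},x^h_{i2})$ lies entirely inside $[x^h_{i1},x^h_{i2}]$, the kink of $I$ at $\eta$ is the sum of the whole-l-edge kink integrals (possibly several l-edges at the same height -- this summation should be said explicitly), and your base-case and inductive arguments then work. You should also justify in the inductive step, not only at level $K$, that the crossing l-edges have level greater than $k$; it is the same support-property argument.
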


\begin{proof}
Necessity. Suppose the interior horizontal l-edge $l^h$ is with
level $k$, and its two support l-edges are $l^b$ and $l^t$. Between
$l^h$ and $l^b$, there are some other l-edges. Suppose the different
$y$ coordinates these l-edges are $\bar{y}_0<\bar{y}_1< \cdots <
\bar{y}_{\bar{n}}$, where $\bar{y}_0$ and $\bar{y}_{\bar{n}}$
correspond to $l^b$ and $l^h$, respectively. Suppose again the
different $y$ coordinates of the l-edges between $l^h$ and $l^t$ are
$\bar{y}_{\bar{n}}<\bar{y}_{\bar{n}+1}< \cdots <
\bar{y}_{\bar{n}+\bar{m}}$, where $\bar{y}_{\bar{n}+\bar{m}}$
corresponds to $l^t$. According to the definition of the support
l-edges, these horizontal l-edges excluding $l^h$, $l^b$ and $l^t$
are with level greater than $k$. Hence there does not exist one of
them such that its vertical projection onto $l^h$ takes one of the
end-points of $l^h$ as its interior point. Therefore, according to
the constraints in Equation \eqref{eqn5.3} corresponding to these
l-edges, we can conclude that $(\bar{y}_{i-1}, I_{i-1})$,
$(\bar{y}_i,I_i)$, $(\bar{y}_{i+1}, I_{i+1})$ are collinear, where
$i=1,\ldots, \bar{m}+\bar{n}-1$, and
$$ I_i = \int_{x_{i1}^h}^{x_{i2}^h} g(s,\bar{y}_i) ds.
$$
Then $(\bar{y}_0, I_0)$, $(\bar{y}_{\bar{n}}, I_{\bar{n}})$,
$(\bar{y}_{\bar{n}+\bar{m}}, I_{\bar{n}+\bar{m}})$ are collinear,
which means the corresponding constraint in Equation \eqref{eqn5.5}
holds.

Sufficiency. We prove it in an inductive fashion from the highest
level to the lowest level. We will show that, for any given level
$k_0$, if the constraints as defined in Equation \eqref{eqn5.5} hold
corresponding to the l-edges of level $k \geqslant k_0$, then the
constraints in Equation \eqref{eqn5.3} corresponding to all the
l-edges of level $k_0$ hold as well.

Suppose the maximal level number in the hierarchical T-mesh
$\mathscr{T}$ is $M$. For an interior horizontal l-edge with level
$M$, since there do not exist other l-edges between the current
l-edge and its support l-edges, the corresponding constraint defined
in Equation \eqref{eqn5.3} is the same as one defined in Equation
\eqref{eqn5.5}.

Now we assume that, for all the interior horizontal l-edges with
level greater than $k$, if the corresponding constraints in Equation
\eqref{eqn5.5} hold, then the corresponding constraints in Equation
\eqref{eqn5.3} hold as well. Select an arbitrary interior horizontal
l-edge $l^h$ with level $k$, whose support l-edges are $l^b$ and
$l^t$. The two-sided nearest l-edges of $l^h$ are with $y$
coordinates $h_0$ and $h_1$. Since the level numbers of the other
horizontal l-edges between $l^h$ and $l^b$ are greater than $k$,
according to the inductive assumption, the integration values along
these horizontal lines with the integration interval
$[x^h_{i1},x^h_{i2}]$ are collinear with respect to their $y$
coordinates. Especially, the corresponding integration values along
$l^h$, $l^b$ and $y = h_0$ are collinear with respect to their $y$
coordinates. Similarly, the integration values along $l^h$, $l^t$
and $y = h_1$ are collinear with respect to their $y$ coordinates as
well. Hence the integration values along $l^h$, $y=h_0$ and $y =
h_1$ are collinear with respect to their $y$ coordinates. This
finishes the proof of the sufficiency.
\end{proof}

In the next subsection we will specify a proper set of basis
functions of $\overline{\mathbf{S}}(1,1,0,0,\mathscr{T})$ such that
the coefficients of $g$ under these basis functions appear in these
constraints in a regular form.

\subsection{Hierarchical basis functions of $\overline{\mathbf{S}}(1,1,0,0,\mathscr{T})$}
\label{hbf}

In Subsection \ref{sec3.4} an approach has been proposed to specify
a set of basis functions of
$\overline{\mathbf{S}}(1,1,0,0,\mathscr{T})$ with many good
properties. Now we specify a new set of basis functions for
$\overline{\mathbf{S}}(1,1,0,0,\mathscr{T})$, which does not poss
the unity-partition property. But under this set, the former
constraints in Lemma \ref{lemm5.2} will appear in a regular form,
which facilitates the rank determination of the constraints.

The new set of basis functions is defined level by level when
forming the hierarchical T-mesh. Every basis function is associated
with a crossing vertex. At the level 0, we consider the level 0
T-mesh $\mathscr{T}^0$, and introduce the functions to be standard
bilinear tensor-product B-splines. Hence every function can be
associated with a crossing vertex in $\mathscr{T}^0$ in a way that
the function reaches one at the crossing vertex, and zero at all the
other crossing vertices of $\mathscr{T}^0$. Use the set
$\mathcal{B}^0$ to denote all these functions. Suppose the current
level number is $k\geqslant 1$. Consider a new-coming crossing
vertex in this level.
\begin{figure}[!htp]
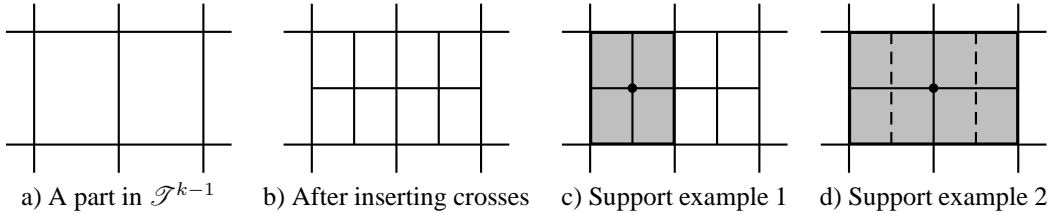

\begin{center}
\psset{unit=0.75cm,linewidth=0.8pt} \noindent\begin{tabular}{cccc}
\pspicture(0,0)(4,3) \psline(0,0.5)(4,0.5)\psline(0,2.5)(4,2.5)
\psline(0.5,0)(0.5,3)\psline(2,0)(2,3)\psline(3.5,0)(3.5,3)
\endpspicture &
\pspicture(0,0)(4,3) \psline(0,0.5)(4,0.5)\psline(0,2.5)(4,2.5)
\psline(0.5,0)(0.5,3)\psline(2,0)(2,3)\psline(3.5,0)(3.5,3)
\psline(0.5,1.5)(3.5,1.5)\psline(1.25,0.5)(1.25,2.5)\psline(2.75,0.5)(2.75,2.5)
\endpspicture
& \pspicture(0,0)(4,3)
\psframe[fillstyle=solid,fillcolor=lightgray](0.5,0.5)(2,2.5)
\psline(0,0.5)(4,0.5)\psline(0,2.5)(4,2.5)
\psline(0.5,0)(0.5,3)\psline(2,0)(2,3)\psline(3.5,0)(3.5,3)
\psdots(1.25,1.5)%(2,1.5)(2.75,1.5)
\psline(0.5,1.5)(3.5,1.5)\psline(1.25,0.5)(1.25,2.5)\psline(2.75,0.5)(2.75,2.5)
\endpspicture &
\pspicture(0,0)(4,3)
\psframe[fillstyle=solid,fillcolor=lightgray](0.5,0.5)(3.5,2.5)
\psline(0,0.5)(4,0.5)\psline(0,2.5)(4,2.5)
\psline(0.5,0)(0.5,3)\psline(2,0)(2,3)\psline(3.5,0)(3.5,3)
\psdots(2,1.5)
\psline(0.5,1.5)(3.5,1.5)\psline[linestyle=dashed](1.25,0.5)(1.25,2.5)
\psline[linestyle=dashed](2.75,0.5)(2.75,2.5)
\endpspicture \\
a) A part in $\mathscr{T}^{k-1}$ & b) After inserting crosses & c)
Support example 1 & d) Support example 2
\end{tabular}
\end{center}
\caption{The support of the hierarchical basis functions
\label{support}}
\end{figure}

\begin{enumerate}
\item If its level is $(k,k)$, then the crossing vertex must be the
center of an inserted cross into a cell $c$ of $\mathcal{T}^{k-1}$.
Hence a function can be defined associated with the crossing vertex
such that the function reaches one at the vertex and its support is
$c$. See Figure \ref{support}.c) for an example, where the filled
region with light-gray is the support of the specified function
associated with the new crossing vertex labeled with $\bullet$. Here
the discontinuous of the derivatives $\partial/\partial x$ and
$\partial/\partial y$ appears on the edges of the inserted cross.

\item Otherwise, the crossing vertex is the middle point of an edge
$e$ in $\mathscr{T}^{k-1}$, where $e$ is the common edge of two
neighboring cells $c_1$ and $c_2$, each of which is subdivided by
inserting a cross from level $k-1$ to $k$. Then a function can be
defined such that the function reaches one at the current crossing
vertex, its support is exactly $c_1 \cup c_2$, and its derivative
discontinuous in the support lies only on the lines through the
current crossing vertex. See Figure \ref{support}.d) for an example,
where the derivatives $\partial/\partial x$ and $\partial/\partial
y$ are continuous along the dashed edges.
\end{enumerate}

All the functions introduced at level $k$ are denoted to be
$\mathcal{B}^k$.

\begin{lemma}
\label{lemm5.3} Suppose the maximal level number of the hierarchical
T-mesh $\mathscr{T}$ is $M$. Define $\mathcal{B}=\bigcup_{k=0}^{M}
\mathcal{B}^k$. Then $\mathcal{B}$ is a set of basis functions of
$\overline{\mathbf{S}}(1,1,0,0,\mathscr{T})$.
\end{lemma}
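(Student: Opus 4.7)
The plan is to prove by induction on the level $k \geq 0$ that $\mathcal{B}(\mathscr{T}^k) := \bigcup_{j=0}^{k} \mathcal{B}^j$ is a basis of $\overline{\mathbf{S}}(1,1,0,0,\mathscr{T}^k)$, so that the lemma is the case $k=M$. The base case $k=0$ is the familiar fact that bilinear tensor-product B-splines form a basis on the tensor-product mesh $\mathscr{T}^0$. Since the construction attaches exactly one function to each crossing vertex at each level, $|\mathcal{B}(\mathscr{T}^k)| = V^+(\mathscr{T}^k) = \dim \overline{\mathbf{S}}(1,1,0,0,\mathscr{T}^k)$ by Theorem \ref{thm3.4}; hence at each inductive step it is enough to establish linear independence.

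The key structural observation driving the inductive step is that every $b_v \in \mathcal{B}^k$ is supported either in a single cell of $\mathscr{T}^{k-1}$ or in the union of two adjacent cells of $\mathscr{T}^{k-1}$, and vanishes on the boundary of this support. In particular $b_v(w)=0$ for every crossing vertex $w$ of $\mathscr{T}^{k-1}$, since any such $w$ is a corner of some cell of $\mathscr{T}^{k-1}$. Given a relation $\sum_{v \in V^+(\mathscr{T}^k)} c_v b_v \equiv 0$, evaluation at each $w \in V^+(\mathscr{T}^{k-1})$ kills all new level-$k$ contributions, so $F := \sum_{v \in V^+(\mathscr{T}^{k-1})} c_v b_v \in \overline{\mathbf{S}}(1,1,0,0,\mathscr{T}^{k-1})$ vanishes at every crossing vertex of $\mathscr{T}^{k-1}$. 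The uniqueness argument from the proof of Theorem \ref{thm3.4} then forces $F \equiv 0$, and the inductive hypothesis yields $c_v=0$ for all $v \in V^+(\mathscr{T}^{k-1})$.

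It remains to show that the residual relation $\sum_{v \text{ new at level } k} c_v b_v \equiv 0$ is trivial. New level-$k$ crossings are of two kinds: a center $v_c$ of a cell subdivided at level $k$, whose basis function is supported in that single cell; and the midpoint $v_m$ of a shared edge between two cells subdivided simultaneously, whose basis function is supported in their union. The plan is to evaluate first at each midpoint vertex $v_m$: for any other new level-$k$ function $b_w$, the point $v_m$ lies either outside $\mathrm{supp}(b_w)$ or on its boundary, so $b_w(v_m)=0$, whereas $b_{v_m}(v_m)=1$, forcing $c_{v_m}=0$. With the midpoint coefficients eliminated, evaluation at each center $v_c$ leaves only $b_{v_c}$ contributing, since other center basis functions have disjoint interior supports; this gives $c_{v_c}=0$.

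The main obstacle is the support/boundary check in the last paragraph, namely that at a midpoint vertex $v_m$ no other new level-$k$ function is nonzero. This is a short case analysis: a function associated with a different center vanishes at $v_m$ because $v_m$ lies outside or on the boundary of that cell, and a function associated with a different midpoint vanishes at $v_m$ either because the two two-cell supports are disjoint, or because, when they share exactly one cell, $v_m$ still lies on the outer boundary of the other support. Once this local vanishing has been verified, the remainder of the argument is essentially bookkeeping over the hierarchical level structure.
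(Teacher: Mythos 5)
Your proposal is correct and matches the paper's proof in all essentials: both reduce the statement to linear independence via the count $|\mathcal{B}| = V^+ = \dim\overline{\mathbf{S}}(1,1,0,0,\mathscr{T})$ (Theorem \ref{thm3.4}) and then eliminate coefficients level by level through point evaluation at crossing vertices, using the support/boundary vanishing of the hierarchical functions and treating the midpoint-type vertices (unequal horizontal and vertical levels) before the center-type ones. Your only deviation---packaging the elimination as an induction on the level meshes $\mathscr{T}^k$ and invoking the determining-set uniqueness from the proof of Theorem \ref{thm3.4} to annihilate the lower-level partial sum, where the paper instead kills lower-level coefficients directly by evaluating at their own crossing vertices---is a minor rearrangement of the same ideas rather than a genuinely different route.
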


\begin{proof}
According to the definition of the functions in $\mathcal{B}^k$, it
follows that the number of the functions in $\mathcal{B}$ is exactly
the number of crossing vertices in $\mathscr{T}$. Moreover, these
functions are in the spline space
$\overline{\mathbf{S}}(1,1,0,0,\mathscr{T})$. Therefore, in order to
prove that $\mathcal{B}$ forms a set of basis functions of
$\overline{\mathbf{S}}(1,1,0,0,\mathscr{T})$, one just needs to show
the functions in $\mathcal{B}$ are linearly independent.

Assume the region occupied by $\mathscr{T}$ is $\Omega$. Let
$\mathcal{B}^k =\{b^k_1(x,y), \ldots, b^k_{n_k}(x,y)\}$. Suppose a
set of coefficients of $\alpha^k_i$ ensure that
$$f(x,y):= \sum_{k=0}^{M} \sum_{i=0}^{n_k} \alpha^k_i b^k_i(x,y) = 0,
\quad (x,y) \in \Omega. $$ Now we will prove that $\alpha^k_i=0$ for
any $i$ and $k$.

Consider the values of $f$ at every crossing vertex $v^0_i$ with
level $(0,0)$. Since $f \equiv 0$, it follows that $f(v^0_i)=0$. On
the other hand, the function $b^0_i(x,y)$ in $\mathcal{B}^0$
associated with $v^0_i$ is one at $v^0_i$, and all the other
functions vanish at $v^0_i$. Hence $f(v^i_0)=\alpha^0_i
b^0_i(v^i_0)=\alpha^0_i$, i.e., $\alpha^0_i=0$. Hence, we have
$$f(x,y)= \sum_{k=1}^{M} \sum_{i=0}^{n_k} \alpha^k_i b^k_i(x,y). $$

Suppose we have proved that $\alpha_i^j=0$, $i=0,\ldots, n_j$,
$j=0,1,\ldots,k-1$. Now we consider the functions and their
coefficients introduced at level $k$. Each of such the functions is
associated with a crossing vertex. We first consider the functions
which are associated with crossing vertices whose vertical and
horizontal level numbers are different. On such a crossing vertex,
all the functions are zero except the associated function with the
current crossing vertex. Hence the corresponding coefficient is
zero. Then we consider the other functions, which are associated
with crossing vertices with the same vertical and horizontal level
numbers. One can conclude that their coefficients are zero as well
in a similar way. Hence all the coefficients are zero, and the
functions in $\mathscr{B}$ are linearly independent. Therefore they
form a set of basis functions of
$\overline{\mathbf{S}}(1,1,0,0,\mathscr{T})$.
\end{proof}

W will call this set of basis functions to be \textbf{the
hierarchical basis functions} of
$\overline{\mathbf{S}}(1,1,0,0,\mathscr{T})$.

\subsection{Concretion of the constraints}

For a given hierarchical T-mesh $\mathscr{T}$, suppose the
hierarchical basis functions of
$\overline{\mathbf{S}}(1,1,0,0,\mathscr{T})$ are
$$\{b^k_i(x,y)\}=\{c_j(x,y)\}_{j=1}^{V^+},$$
where $b^k_i(x,y)$ is associated with a crossing vertex appearing in
$\mathscr{T}$ since level $k$.

Represent $g$ as \begin{equation} \label{eqn5.6} g(x,y) =
\sum_{j=1}^{V^+} \alpha_j c_j(x,y).
\end{equation}
Since there is a one-to-one mapping between the basis functions and
the crossing vertices, the coefficient of a basis function is called
also the \textbf{coefficient of the corresponding crossing vertex}.

Now we state the characteristic of the constraints after
substituting the representation of $g$ as defined in Equation
\eqref{eqn5.6} into Equation \eqref{eqn5.5} which ensures
$\mathcal{I}(g) \in \overline{\mathbf{S}}(2, 2, 1,1,\mathscr{T})$.
At first we consider the constraints corresponding to the interior
l-edges of level 0.

\begin{proposition}
\label{prop5.5} After substituting Equation \eqref{eqn5.6} into
Equation \eqref{eqn5.5}, and then taking a proper transformation,
all the constraints corresponding to the interior horizontal l-edges
of level 0 are in such a form that the nonzero terms in a constraint
consist of those associated with the crossing vertices on the same
l-edge.
\end{proposition}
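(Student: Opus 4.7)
First I would observe that for an interior horizontal l-edge $l_i$ of level $0$ at $y=y_i$, the support l-edges in the sense of Definition \ref{support} are simply the two adjacent level-$0$ horizontal l-edges $l_{i-1}$ and $l_{i+1}$. Indeed, all horizontal l-edges of level $0$ belong to the tensor-product mesh $\mathscr{T}^0$ and span the full range $[x_l,x_r]$, so Equation \eqref{eqn5.5} specializes to the collinearity relation
\begin{equation*}
C_i\colon\ (y_{i+1}-y_{i-1})I_i=(y_{i+1}-y_i)I_{i-1}+(y_i-y_{i-1})I_{i+1}, \qquad i=1,\ldots,m-1,
\end{equation*}
where $I_k:=\int_{x_l}^{x_r}g(s,y_k)\,ds$. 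Because $g$ satisfies HBC and is continuous on $\mathds{R}^2$, it vanishes on $\partial\Omega$, so $I_0=I_m=0$ identically. The $m-1$ constraints $\{C_i\}$, viewed as linear relations on the scalars $I_1,\ldots,I_{m-1}$, form a tridiagonal system asserting that the $m+1$ points $(y_0,0),(y_1,I_1),\ldots,(y_{m-1},I_{m-1}),(y_m,0)$ are collinear; the only such configuration is $I_i=0$ for every interior $i$. Hence the original set $\{C_i\}$ is equivalent, via an invertible linear substitution, to the simpler set $\{\widetilde C_i\colon I_i=0\}_{i=1}^{m-1}$, and this substitution is the ``proper transformation'' envisaged in the proposition.

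Next I would show that each $I_i$, after substituting $g=\sum_j\alpha_j c_j$, is a linear combination of only those $\alpha_v$ whose associated crossing vertex $v$ lies on $l_i$. The argument is a case analysis over the hierarchical basis functions constructed in Subsection \ref{hbf}. For a level-$0$ tensor-product basis function $b^0_{jk}(x,y)=B_j(x)B_k(y)$, the hat factor $B_k$ satisfies $B_k(y_\ell)=\delta_{k\ell}$, so $b^0_{jk}(\cdot,y_i)\equiv 0$ unless its associated vertex $(x_j,y_k)$ lies on $l_i$. For a basis function $b^k_w$ of level $k\geqslant 1$, its support is contained in a single cell, or the union of two adjacent cells, of the intermediate mesh $\mathscr{T}^{k-1}$; since $\mathscr{T}^{k-1}$ refines $\mathscr{T}^0$, no level-$0$ edge can cross the interior of any cell of $\mathscr{T}^{k-1}$. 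Consequently every level-$0$ horizontal l-edge either misses the closed support of $b^k_w$ entirely or meets it only along its topological boundary, where $b^k_w$ vanishes, unless that l-edge actually passes through $w$ itself. Therefore $b^k_w$ contributes to $I_i$ only when $w$ lies on $l_i$.

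Combining these two ingredients, each transformed constraint $\widetilde C_i\colon I_i=0$ becomes a linear relation on precisely those coefficients $\alpha_v$ whose crossing vertex $v$ lies on $l_i$, which is the conclusion of the proposition. The main obstacle I anticipate is the geometric support-vanishing analysis of the second step: one has to treat carefully the mixed-level case in which $w$ has level $(k,j)$ with $k\neq j$ and $b^k_w$ spans two cells of $\mathscr{T}^{k-1}$ across a shared edge, and verify that the rectangular support can touch a level-$0$ horizontal l-edge only along its own top or bottom side, so that $b^k_w$ is forced to be identically zero on any level-$0$ horizontal l-edge not containing $w$. Once this geometric fact is in hand, the collinearity reasoning of the first paragraph packages the two steps together into the required statement.
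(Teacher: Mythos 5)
Your proposal is correct and follows essentially the same route as the paper: the paper likewise reduces the level-0 constraints to the vanishing of $\int_{x_l}^{x_r} g(s,y_i)\,ds$ along each interior level-0 l-edge (it simply cites Lemma \ref{lemm4.2} rather than re-deriving the collinearity argument) and then observes that only basis functions of crossing vertices on that l-edge are nonzero along it. Your second paragraph merely fills in the support-vanishing details that the paper asserts without proof, so there is no substantive difference in approach.
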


\begin{proof}
Each of interior horizontal l-edges with level 0 must traverse the
whole mesh. Its support l-edges are with level 0 as well. Over the
T-mesh $\mathscr{T}^0$, we apply Lemma \ref{lemm4.2} and obtain that
all the constraints corresponding to the interior horizontal l-edges
of level 0 are equivalent with that the integration of $g$ along
each interior horizontal l-edge is zero. For every l-edge of level
0, among all the coefficients, only those of the crossing vertices
on the current l-edge are nonzero. Hence the integration can be
represented into a linear combination of the coefficients of the
crossing vertices on the current l-edge.
\end{proof}

Then we consider the constraints corresponding to the interior
l-edges of level greater than $0$.

\begin{proposition}
\label{prop5.6} Suppose the current interior horizontal l-edge is
with level $k>0$. After substituting Equation \eqref{eqn5.6} into
Equation \eqref{eqn5.5}, the nonzero coefficients of the basis
functions in the corresponding constraint consist of two parts as
follows:
\begin{enumerate}
\item[A.] the coefficients of the crossing vertices on the current l-edge;
\item[B.] the possible coefficients of the crossing vertices with
horizontal level less than $k$ and vertical level greater than $k$.
\end{enumerate}
\end{proposition}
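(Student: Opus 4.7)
The plan is to substitute the expansion $g=\sum_v\alpha_v b_v$ of Equation~\eqref{eqn5.6} into the constraint form~\eqref{eqn5.5} associated with the current l-edge $l^h_i$ and to identify, for each hierarchical basis function $b_v$, when its coefficient $\Gamma_v$ in this linear combination can be nonzero. The central observation is a linear-interpolation cancellation: if, for some fixed $s_0\in[x^h_{i1},x^h_{i2}]$, the section $b_v(s_0,\cdot)$ is affine on the whole interval $[y^{hb}_i,y^{ht}_i]$, then
\begin{equation*}
(y^{ht}_i-y^{hb}_i)\,b_v(s_0,y^h_i)=(y^{ht}_i-y^h_i)\,b_v(s_0,y^{hb}_i)+(y^h_i-y^{hb}_i)\,b_v(s_0,y^{ht}_i),
\end{equation*}
and integrating in $s_0$ over $[x^h_{i1},x^h_{i2}]$ gives $\Gamma_v=0$. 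So one only needs to find the basis functions for which $b_v(s_0,\cdot)$ fails to be affine on $[y^{hb}_i,y^{ht}_i]$ for some $s_0\in[x^h_{i1},x^h_{i2}]$.

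By the construction of the hierarchical basis in Subsection~\ref{hbf}, $b_v$ is piecewise bilinear with $\partial b_v/\partial y$ discontinuous inside its support only along the horizontal line $y=y_v$, and $b_v$ vanishes on the outer boundary of its (one- or two-cell) support. Consequently, non-affinity of $b_v(s_0,\cdot)$ on $[y^{hb}_i,y^{ht}_i]$ can come only from (i)~the discontinuity line $y=y_v$ lying strictly inside $(y^{hb}_i,y^{ht}_i)$, or (ii)~a $y$-boundary of the support lying strictly inside $(y^{hb}_i,y^{ht}_i)$ with $b_v$ nonzero on its inner side.

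I would then run a case analysis driven by the structural fact that, within the $x$-extent of $l^h_i$, no horizontal l-edge and no cell-edge of any $\mathscr{T}^{p-1}$ with $p\leq k$ strictly between $l^b$ and $l^t$ can have level smaller than $k$. For~(i), this forces the horizontal level of $v$ to be at least $k$; if it equals $k$ then $v$ must lie on $l^h_i$ (Type~A), since basis functions of collinear level-$k$ l-edges outside the $x$-extent of $l^h_i$ are supported away from $[x^h_{i1},x^h_{i2}]$, while if it strictly exceeds $k$ the level-$p$ support of $b_v$ is trapped inside a descendant of a level-$(k-1)$ cell between $l^b$ and $l^t$, so $b_v$ vanishes simultaneously at the three heights $y^{hb}_i,y^h_i,y^{ht}_i$ and $\Gamma_v=0$. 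For~(ii), the support $y$-boundary must sit on an l-edge of level at least $k$, hence $b_v$ is of some level $p\geq k+1$; tracing the two construction rules of the hierarchical basis at level $p$, the only configuration that leaves a genuine interior kink is one where $v$ has vertex level $(k_h,k_v)$ with $k_h<k$ and $k_v=p>k$ and the support is two vertically stacked cells of $\mathscr{T}^{p-1}$ sharing a horizontal edge at $y_v\in\{y^{hb}_i,y^{ht}_i\}$, whose far $y$-boundary falls at a horizontal l-edge of level $\geq k$ strictly between $l^b$ and $l^t$. This is exactly Type~B.

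The main obstacle will be the bookkeeping in the case analysis for~(ii): one must enumerate every remaining level-vs-level configuration of $v$ (for example horizontal level $\geq k$ with support straddling $l^h_i$, horizontal level $<k$ with vertical level $\leq k$, or vertices just outside the $x$-extent of $l^h_i$ whose support nevertheless reaches into $[x^h_{i1},x^h_{i2}]$) and verify that in each such configuration either the support $y$-boundary fails to lie strictly inside $(y^{hb}_i,y^{ht}_i)$ or $b_v(s_0,\cdot)$ is affine throughout the interval, so that the three-term combination cancels after integration in $s_0$.
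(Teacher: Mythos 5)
Your proposal is essentially the paper's own argument: the same linear-interpolation (affine-section) cancellation that makes the three-term identity hold for basis functions whose behavior between the support l-edges is unrefined, the same structural fact that within the $x$-extent of $l^h_i$ everything strictly between $l^b$ and $l^t$ has level at least $k$ (with only $l^h_i$ itself at level $k$), and the same vanishing-on-all-three-lines argument for vertices of horizontal level greater than $k$; the paper merely organizes the bookkeeping you defer as a direct case split on the vertex level pair $(k^h_0,k^h_1)$ being $<k$, $=k$, or $>k$. Your outline is correct (with the minor caveat that other configurations besides Type B can produce interior kinks, but they contribute zero because the function vanishes on all three integration lines, an argument you already invoke), so it matches the paper's proof in substance.
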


\begin{proof}
Select an interior horizontal l-edge $l^h$ with level $k>0$. Suppose
its support l-edges are $l^b$ and $l^t$ with levels $k_1$ and $k_2$,
respectively. Assume the $x$ coordinates of the two end-points of
$l^h$ are $x^h_0$ and $x^h_1$, and the vertical l-edges through the
two end-points are $l^v_l$ and $l^v_r$. Then the two vertical
l-edges $l^v_l$ and $l^v_r$ intersect with $l_b$ and $l_t$. Consider
the crossing vertices whose associating basis functions are nonzero
on $l^h$, $l^b|_{[x^h_0,x^h_1]}$ or $l^t|_{[x^h_0,x^h_1]}$. These
crossing vertices can be classified into the following cases with
respect to their level $(k^h_0, k^h_1)$:
\begin{enumerate}
\item $k^h_0 < k$:
\begin{enumerate}
\item $k^h_1 \leqslant k$. Because, for any $x \in [x^h_0,x^h_1]$,
the corresponding three points on $l^h$, $l^t$, and $l^b$ with the
horizontal coordinates $x$ appear simultaneously in a cell of
$\mathscr{T}^{k-1}$ (including its boundary), it follows that for
the current basis function $b(x,y)$, Equation \eqref{eqn5.5} holds
as $g = b(x,y)$. This means that the coefficients of $b(x,y)$ does
not appear in the constraint after simplification.

\item $k^h_1
>k$. This types of coefficients can appear in the constraint.
\end{enumerate}

\item $k^h_0 =k$: This type of crossing vertex has no contribution to
the constraints, unless it lies on $l^h$.

\item $k^h_0 > k$: The associating basis function with this type of crossing vertex
vanishes on $l^h$, $l^b$ and $l^t$ between $x^h_0$ and $x^h_1$.
Hence the corresponding coefficient does not appear in the
constraint.
\end{enumerate}
Therefore the nonzero coefficients come in Cases 1(b) and 2, which
correspond to Cases B and A in the proposition description,
respectively.
\end{proof}

We can make a similar classification on the coefficients' appearance
in the constraints corresponding to interior vertical l-edges.

After these classifications, one knows that the coefficient of a
crossing vertex with level $(k_1,k_2)$ can appear in the following
three types of places:
\begin{enumerate}
\item The constraints corresponding to the horizontal l-edge through the vertex;
\item The constraints corresponding to the vertical l-edge through the vertex;
\item If $k_1<k_2-1$ (or $k_1-1 > k_2$), it may appear in the constraints
corresponding to horizontal (or vertical) l-edges with level less
than $k_2$ (or $k_1$). If $k_1=k_2$ or $k_2 \pm 1$, this situation
does not happen.
\end{enumerate}
The involved l-edges in the first two cases are called
\textbf{naturally appearing l-edges of the coefficient}. The
involved l-edges in the third case are called \textbf{unnaturally
appearing l-edges of the coefficient}. For simplify, a constraint
corresponding to a horizontal/vertical l-edge of level $k$ is simply
called the \textbf{horizontal/vertical constraint of level $k$} in
the following. For two given interior l-edges $\ell_1$ and $\ell_2$,
their corresponding constraints are $c_1$ and $c_2$, respectively.
If $\ell_1<_1 \ell_2$, then we define $c_1 <_1 c_2$ as well.

\subsection{Dimension theorem over crossing-vertex connected hierarchical T-meshes}
\label{sec5.5}

With these preparations, now we state and prove the dimension
theorem of biquadratic spline spaces over crossing-vertex connected
hierarchical T-meshes.

\begin{theorem}
\label{thm} Over a crossing-vertex connected hierarchical T-mesh
$\mathscr{T}$ or its extension associated with
$\mathbf{S}(2,2,1,1,\mathscr{T})$, where $V^+>0$, it follows that
$$ \dim \overline{\mathbf{S}}(2,2,1,1,\mathscr{T}) = V^+ - E + 1. $$
\end{theorem}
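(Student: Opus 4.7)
The plan is to match Theorem \ref{Dim2211} with an upper bound. Since $\mathcal{D}$ is injective with image in $\overline{\mathbf{S}}(1,1,0,0,\mathscr{T})$ (which has dimension $V^+$ by Theorem \ref{thm3.4}), and since by Lemma \ref{lemm4.3} combined with the analysis of Subsection \ref{sec4.2} the necessary and sufficient constraints on $g$ for $\mathcal{I}(g)$ to lie in $\overline{\mathbf{S}}(2,2,1,1,\mathscr{T})$ number exactly $E$, one has $\dim \overline{\mathbf{S}}(2,2,1,1,\mathscr{T}) = V^+ - r$, where $r$ is the rank of the constraint system. Theorem \ref{Dim2211} already gives $r \leqslant E-1$, so it suffices to produce $E-1$ linearly independent constraints.

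The strategy is to exhibit this via an explicit triangular structure. First, rewrite every constraint in the support-l-edge form of Lemma \ref{lemm5.2}, and expand $g$ in the hierarchical basis $\mathcal{B} = \bigcup_k \mathcal{B}^k$ furnished by Lemma \ref{lemm5.3}, so coefficients $\alpha_j$ of $g$ correspond bijectively to crossing vertices. Next, linearize the partial ordering $<_1$ on interior l-edges of Subsection \ref{ordering} into a total ordering, and drop one globally redundant constraint $C_0$ (the existence of such a redundancy was shown via \eqref{lll}). This produces a list of $T = E-1$ constraints $C_1 <_1 C_2 <_1 \cdots <_1 C_T$.

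For each $C_i$, let $V_i$ be the characteristic vertex of the associated l-edge $\ell_i$ (the crossing vertex where a minimal continuous series connecting $\ell_i$ to the entering l-edge of its branch meets $\ell_i$), and let $\beta_i$ be the coefficient in $g$ of the hierarchical basis function attached to $V_i$; fill in $\beta_{T+1},\ldots,\beta_M$ arbitrarily with the remaining coefficients. I would then read off the matrix $\mathbf{M} = (m_{ij})$ with $(C_1,\ldots,C_T)^T = \mathbf{M}(\beta_1,\ldots,\beta_M)^T$ and verify two things. First, $m_{ij}=0$ for $i>j$: Propositions \ref{prop5.5} and \ref{prop5.6} constrain which basis-function coefficients can appear in $C_i$ — those attached to crossing vertices on $\ell_i$ itself, plus possibly vertices whose horizontal/vertical levels straddle the level of $\ell_i$ — and in each admissible case the vertex in question is the characteristic vertex of some $\ell_j$ with $\ell_j >_1 \ell_i$ (either because $\ell_j$ lies at a strictly higher level, or because within the same branch it sits at a strictly larger distance from the entering l-edge). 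Second, $m_{ii}\neq 0$: the characteristic vertex $V_i$ lies on $\ell_i$, and its basis function contributes a nonzero integral across $\ell_i$ with respect to the support-l-edge formula \eqref{eqn5.5}, which can be checked directly from the local tensor-product shape of the basis function.

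The main obstacle is the case analysis for $m_{ij}=0$ when $i>j$: it requires cross-referencing Proposition \ref{prop5.6}'s two sources of nonzero coefficients (natural vs.\ unnatural appearance) against the three clauses defining $<_1$, and confirming in every case that such a vertex is indeed the characteristic vertex of a later l-edge. The hypothesis that $\mathscr{T}$ is crossing-vertex connected is essential here: it guarantees every l-edge can be reached from the entering l-edge of its branch by a continuous series through crossing vertices, so distances and characteristic vertices are well defined, and it rules out the stray-coefficient configurations (as in Figure \ref{exam1}) that would break the triangular structure.
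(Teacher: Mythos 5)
Your proposal is correct and follows essentially the same route as the paper's own proof: reduce via the injective operator $\mathcal{D}$ and the $E$ constraints, rewrite them in the support-l-edge form of Lemma \ref{lemm5.2}, expand $g$ in the hierarchical basis of Lemma \ref{lemm5.3}, order the constraints by $<_1$, drop one redundant constraint, and establish full row rank through the triangular structure ($m_{ij}=0$ for $i>j$, $m_{ii}\neq 0$) given by the characteristic vertices and Propositions \ref{prop5.5} and \ref{prop5.6}. The only refinement worth noting is that the dropped constraint must be one appearing with nonzero coefficient in the dependency relation \eqref{lll}, i.e.\ a level-zero constraint, which is exactly the choice the paper makes.
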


\begin{proof}
Recall some facts first. For any $g \in
\overline{\mathbf{S}}(1,1,0,0,\mathscr{T})$, in order to ensure that
$\mathcal{I}(g) \in \overline{\mathbf{S}}(2,2,1,1,\mathscr{T})$, the
constraints corresponding to every interior horizontal l-edges
(defined in Equation \eqref{eqn5.5}) and vertical l-edges should be
satisfied. Here we apply the hierarchical basis functions
$\mathcal{B}$ of $\overline{\mathbf{S}}(1,1,0,0,\mathscr{T})$. These
constraints can be represented into linear combinations of the
coefficients of $g$ under the basis functions $\mathcal{B}$. The
coefficients appear in these constraints as stated in Propositions
\ref{prop5.5} and \ref{prop5.6}.

Suppose the l-edges of level $k$ in $\mathscr{T}$ form some
connected branch $\theta_{k,i}$, $i=1,2, \ldots, T_k$, and the
entering l-edges for all the branches $\theta_{k,i}$ have been
selected, denoted as $\ell_{k,i}$. Specially, all the interior
l-edges of level zero just form a connected branch, and its entering
l-edge is selected to any one interior l-edge of level zero. Then we
can introduce a partial ordering $<_1$ on the interior l-edges and
the corresponding constraints as stated in Subsection
\ref{ordering}.

At the beginning, all the constraints are linearly dependent because
Equation \eqref{lll} shows a linear combination of these constraints
with result zero. In this linear combination, each of the
coefficients of level zero is nonzero. Hence the rank of the
constraints remains unchanged after deleting any one constraint of
level zero. Without loss of generality, we assume that the deleted
constraint is corresponding to the entering l-edge $\ell_{0,1}$ of
$\theta_{0,1}$. Then we focus on the remaining constraints and we
will show that they are linear independent.

Sort all the remaining constraints into a non-decreasing series
$C_1, C_2, \cdots, C_T$ according to the ordering $<_1$, where $T =
E-1$, and sort the coefficients $\{\alpha_i\}$ in Equation
\eqref{eqn5.6} into a series $\beta_1, \beta_2, \ldots, \beta_{V^+}$
such that $\beta_i$ is a characteristic vertex of the l-edge whose
corresponding constraint is $C_i$, $i=1, \ldots, T$. The rest
variables $\beta_{T+1}, \ldots, \beta_{V^+}$ are arranged randomly.
Then we can write these constraints into the following matrix form:
$$ \left( C_1, C_2, \ldots, C_{T}
\right)^T = \mathbf{M} \left( \beta_1, \beta_2, \ldots, \beta_{V^+}
\right)^T,
$$
where $\mathbf{M} = (m_{ij})$. Because the characteristic vertex of
an l-edge $\ell$ with level $k$ is with level $(k,j)$ or $(j,k)$,
where $j \leqslant k$, it follows that, according to Propositions
\ref{prop5.5} and \ref{prop5.6}, the coefficient $\beta_i$ does not
appear in the constraints $C_{i+1}, \ldots, C_T$, where $i=1,
\ldots, T-1$. Hence $m_{ij}=0$, $i>j$. On the other hand, the matrix
$\mathbf{M}$ is with $m_{ii} \neq 0$, $i=1, \ldots, T$. This means
that the matrix $\mathbf{M}$ is with full row rank, i.e.,
$\mathop{\mathrm{rank}} \mathbf{M} = T$. Therefore the dimension of
the spline space $\overline{\mathbf{S}}(2,2,1,1,\mathscr{T})$ is
$V^+-T = V^+ - E + 1$. This completes the proof of the theorem.
\end{proof}

\subsection{General Dimension Theorem}
\label{gdt}

In this subsection we consider the dimension formula of
$\overline{\mathbf{S}}(2,2,1,1,\mathscr{T})$, where $\mathscr{T}$ is
a general hierarchical T-mesh.

At first, we discuss how to divide a general hierarchical T-mesh
into the union of some crossing-vertex connected hierarchical
T-meshes. Suppose the given hierarchical T-mesh is $\mathscr{T}$,
and let $c_i$, $i=1,\ldots, C$, be all the isolated subdivided cells
with level $k > 0$ in forming $\mathscr{T}$. Then the subdivision
happening in $c_i$ to form $\mathscr{T}$ will form as well a
hierarchical T-mesh, denoted as $\mathscr{U}_i$. Here
$\mathscr{U}_i$ occupies the same region as the cell $c_i$. In the
following we will apply a T-mesh operation `$\backslash$'. Suppose
$\mathscr{T}_i$, $i=0, 1, \ldots, k$, are hierarchical T-meshes.
Then $\mathscr{T}_0 \backslash \{ \mathscr{T}_1,
\ldots,\mathscr{T}_k\}$ is a new T-mesh which consists of edges and
vertices in $\mathscr{T}_0$ but not in the interior of
$\mathscr{T}_i$, $i=1,\ldots,k$. Let $\mathscr{U}_0 = \mathscr{T}$,
and
$$ \mathscr{V}_i = \mathscr{U}_i \backslash \{ \mathscr{U}_j: \mathscr{U}_j \mbox{
is a submesh of } \mathscr{U}_i, j =1, \ldots,C, j \neq i \},
i=0,1,\ldots, C.$$ Then it is easy to verify that $\mathscr{V}_i$ is
a crossing-vertex connected hierarchical T-mesh. Here
$\mathscr{V}_0$ comes from $\mathscr{T}$, and the other
$\mathscr{V}_i$'s come from the isolated subdivided cells with level
$k>0$. Then it follows that $\mathscr{T}$ can be seen as the
disjoint union of $\mathscr{V}_i$, $i=0,1,\ldots, C$.

\begin{lemma}
\label{lem5.11} Suppose the hierarchical T-mesh $\mathscr{T}$ and
its disjoint union of $\mathscr{V}_i$, $i=0,1,\ldots, C$, are
defined as before. Then it follows that
\begin{equation}
\label{eqn5.7} \overline{\mathbf{S}}(2,2,1,1,\mathscr{T}) =
\bigoplus_{i=0}^C \overline{\mathbf{S}}(2,2,1,1,\mathscr{V}_i).
\end{equation}
\end{lemma}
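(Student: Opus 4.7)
The plan is to establish the claimed direct sum identity in three steps: embedding each summand into $\overline{\mathbf{S}}(2,2,1,1,\mathscr{T})$, verifying that the sum is direct, and showing that it covers the full ambient space.

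For the inclusions $\overline{\mathbf{S}}(2,2,1,1,\mathscr{V}_i)\hookrightarrow\overline{\mathbf{S}}(2,2,1,1,\mathscr{T})$, for $i\geq 1$ any $g\in\overline{\mathbf{S}}(2,2,1,1,\mathscr{V}_i)$ is supported on $c_i$ with HBC on $\partial c_i$, so extending by zero preserves $C^{1,1}$ globally; moreover, on each sub-isolated cell $c_j\subsetneq c_i$ (a single cell of $\mathscr{V}_i$ but further subdivided in $\mathscr{T}$), the function $g$ is a single biquadratic polynomial that is automatically piecewise biquadratic on the finer subcells of $\mathscr{T}$. The case $i=0$ is analogous, applied to each isolated subdivided cell $c_j$ that is a single cell of $\mathscr{V}_0$.

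For directness, I would assume $\sum_{i=0}^{C}f_i\equiv 0$ with $f_i\in\overline{\mathbf{S}}(2,2,1,1,\mathscr{V}_i)$ and induct on nesting depth, starting from an innermost (leaf) cell $c_{i_0}$ (one containing no other $c_j$). On $c_{i_0}$ each $f_j$ with $j\neq i_0$ either vanishes ($c_j\cap c_{i_0}=\emptyset$) or restricts to a single biquadratic polynomial ($j=0$, or $c_{i_0}\subseteq c_j$ so that $c_{i_0}$ is a single cell of $\mathscr{V}_j$). Hence $f_{i_0}|_{c_{i_0}}$ coincides with the negative of a biquadratic polynomial; combined with HBC on $\partial c_{i_0}$ and the elementary fact that a bidegree-$(2,2)$ polynomial with $p(x,y_0)=\partial_y p(x,y_0)=p(x,y_1)=0$ is identically zero (since the first two force $p(x,y)=(y-y_0)^2 q(x)$ for a quadratic $q$, and the third forces $q\equiv 0$), we deduce $f_{i_0}\equiv 0$, and iterate outward through the nesting.

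For surjectivity, I would induct on the number $C$ of isolated subdivided cells, peeling one leaf $c_{i_0}$ at a time. Given $f\in\overline{\mathbf{S}}(2,2,1,1,\mathscr{T})$, the goal is to construct a biquadratic polynomial $p$ on $c_{i_0}$ whose $C^{1,1}$ trace along $\partial c_{i_0}$ agrees with that of $f$; then $f_{i_0}:=(f|_{c_{i_0}}-p)$ extended by zero lies in $\overline{\mathbf{S}}(2,2,1,1,\mathscr{V}_{i_0})$, and $f-f_{i_0}$ becomes a single biquadratic on $c_{i_0}$, hence lies in $\overline{\mathbf{S}}(2,2,1,1,\mathscr{T}')$ where $\mathscr{T}'$ un-subdivides $c_{i_0}$ and carries $C-1$ isolated subdivided cells, at which point the inductive hypothesis finishes the decomposition. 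The main obstacle is the existence of $p$: a priori the $C^{1,1}$ trace of $f$ on each edge of $\partial c_{i_0}$ is only piecewise quadratic with $C^1$ joins (with breakpoints at T-vertices coming both from inside $\mathscr{U}_{i_0}$ and from neighboring non-isolated subdivisions outside), which a single biquadratic cannot match. I would overcome this by a B-net/B\'ezier-ordinate analysis combined with the vanishing of the HBC spline space on inner crosses (from Theorem \ref{thm}, $\dim\overline{\mathbf{S}}(2,2,1,1,\text{single cross})=V^+-E+1=1-2+1=0$), which propagated through the global $C^{1,1}$ constraints of $\mathscr{T}$ forces the piecewise trace of $f$ on each edge of $\partial c_{i_0}$ to collapse to a single quadratic; the biquadratic extension $p$ is then uniquely determined by its $9$ independent boundary coefficients, and the induction carries through.
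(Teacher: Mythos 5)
Your route is genuinely different from the paper's: the paper first checks that the pairwise intersections of the subspaces are trivial and then peels from the outside in, defining projections $\mathbb{P}_jf\in\overline{\mathbf{S}}(2,2,1,1,\mathscr{V}_j)$ that match the residual of $f$ to first order along the edges of $\mathscr{V}_j$, whereas you peel from the innermost leaf cells outward. Your zero-extension inclusions are fine, and your directness argument (innermost-cell induction, using that on a leaf cell the other summands restrict to a single biquadratic and that a biquadratic vanishing to first order on the boundary is zero) is sound; it in fact verifies the full independence condition for the multi-fold direct sum, which is cleaner than the paper's pairwise-intersection remark.

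The gap is in the surjectivity step, which is where the real content of the lemma sits. First, the part you propose heavy machinery for is actually easy: because $c_{i_0}$ is isolated, the cell of $\mathscr{T}$ on the outside of each edge of $\partial c_{i_0}$ spans that whole edge, so $C^{1,1}$ continuity with that single outside biquadratic already forces the value and normal-derivative traces of $f$ along each edge to be single quadratics; no B-net analysis is needed for this. But single-quadratic traces edge by edge are \emph{not} enough for the existence of a biquadratic $p$ matching the full first-order data on all four edges: a quadratic in $y$ cannot in general interpolate the four quantities $f(x,y_b),\ \partial_y f(x,y_b),\ f(x,y_t),\ \partial_y f(x,y_t)$; one needs the compatibility identity $\partial_y f(x,y_b)+\partial_y f(x,y_t)=\frac{2}{y_t-y_b}\bigl(f(x,y_t)-f(x,y_b)\bigr)$ for all $x$ across the cell (and the symmetric identity in the other direction). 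Establishing this identity from the fact that $f$ is a $C^{1,1}$ piecewise biquadratic over the interior subdivision of $c_{i_0}$ (e.g.\ via the constraints of Lemma \ref{lemm5.2} applied to $g=\mathcal{D}f$ along the l-edges interior to $c_{i_0}$, or a genuine B-net computation) is precisely the crux, and your sketch does not supply it. Moreover, the mechanism you do invoke---$\dim\overline{\mathbf{S}}(2,2,1,1,\cdot)=0$ over a single inner cross from Theorem \ref{thm}---cannot fill this hole: that dimension statement concerns functions already vanishing to first order on $\partial c_{i_0}$, which you only have after $p$ has been constructed, so the argument as sketched is circular; and for a leaf cell whose interior mesh has more than one level the local HBC dimension $V^+_{i_0}-E_{i_0}+1$ is in general positive, so no vanishing statement is available anyway. (The paper itself merely asserts the existence of its projection $\mathbb{P}_0 f$ at the corresponding point, but your proposal does not repair that step either.)
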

\begin{proof}
At first, we prove that the intersection of any two different
subspaces among $\overline{\mathbf{S}}(2,2,1,1,\mathscr{V}_i)$,
$i=0,1, \ldots, C$ are $\{0\}$. Therefore we can define the direct
sum of these subspaces. Suppose we select the submeshes
$\mathscr{V}_{i_0}$ and $\mathscr{V}_{i_1}$, where $i_0 \neq i_1$.
The submeshes $\mathscr{U}_{i_0}$ and $\mathscr{U}_{i_1}$ are
defined as before. If any one of $\mathscr{U}_{i_0}$ and
$\mathscr{U}_{i_1}$ is not a submesh of the other one, then the
regions occupied by these them are disjoint, which follows that the
intersection of the subspaces over $\mathscr{V}_{i_0}$ and
$\mathscr{V}_{i_1}$ is just $\{0 \}$. Otherwise, we assume, without
loss of generality, that $\mathscr{U}_{i_0}$ is a submesh of
$\mathscr{U}_{i_1}$. Then the region occupied by $\mathscr{U}_{i_0}$
is inside a cell of $\mathscr{U}_{i_1}$. Denote the cell to be $c$.
In $\mathscr{V}_{i_1}$, the zero function is a unique function whose
function values and two partial derivatives of order one are zero
along the boundary of $c$. Hence it follows that the intersection of
the subspaces over $\mathscr{V}_{i_0}$ and $\mathscr{V}_{i_1}$ is
 $\{0 \}$ as well.

It is obvious that
\begin{equation}
\label{eqn5.8} \overline{\mathbf{S}}(2,2,1,1,\mathscr{T}) \supset
\bigoplus_{i=0}^C \overline{\mathbf{S}}(2,2,1,1,\mathscr{V}_i).
\end{equation}
On the other hand, for any $f \in
\overline{\mathbf{S}}(2,2,1,1,\mathscr{T})$, one can construct its
component in each subspace as follows. For any $j=0,1, \ldots, C$,
we can arrange all the meshes $\{\mathscr{U}_i\}_{i=1}^C$, each of
which takes $\mathscr{U}_j$ as a submesh, in an ascending chain as
follows:
$$ \mathscr{U}_j = \mathscr{U}_{i_j} \subset \mathscr{U}_{i_j-1}
\subset \cdots \subset \mathscr{U}_{i_0} = \mathscr{U}_0.$$

Since $\mathscr{V}_0$ comes from $\mathscr{U}_0 = \mathscr{T}$ by
deleting the subdivisions in some isolated subdivided cells, we can
define a new function $\mathbb{P}_0 f$ which meets $f$ with order
one along all the edges in $\mathscr{V}_0$. Then $f - \mathbb{P}_0
f$ vanishes out of those isolated subdivided cells of
$\mathscr{U}_0$. We define recursively that $\mathbb{P}_{j} f$ is a
function in $\overline{\mathbf{S}}(2,2,1,1,\mathscr{V}_j)$ which
meets the function $f - \sum_{k=i_0}^{i_j-1} \mathbb{P}_k f$ with
order one along all the edges in $\mathscr{V}_j$. Then $f -
\sum_{k=i_0}^{i_j} \mathbb{P}_k f$ vanishes in $\mathscr{V}_j$
except in the isolated subdivided cells of $\mathscr{U}_j$.

According to the definition of $\mathbb{P}_k$, it follows that $f -
\sum \mathbb{P}_k f$ vanishes everywhere in $\mathscr{T}$. Hence $f
\in \bigoplus_{i=0}^C \overline{\mathbf{S}}(2,2,1,1,\mathscr{V}_i)$,
which means that
\begin{equation}
\label{eqn5.9} \overline{\mathbf{S}}(2,2,1,1,\mathscr{T}) \subset
\bigoplus_{i=0}^C \overline{\mathbf{S}}(2,2,1,1,\mathscr{V}_i).
\end{equation}
Combining Equations \eqref{eqn5.8} and \eqref{eqn5.9} together, one
gets Equation \eqref{eqn5.7}.
\end{proof}

With this lemma, we have the following theorem about the dimension
formula of the spline spaces over general hierarchical T-meshes.

\begin{theorem}
Suppose $\mathscr{T}$ is a hierarchical T-meshes with $\delta -1$
isolated subdivided cells. Then
$$ \dim \overline{\mathbf{S}}(2,2,1,1,\mathscr{T}) = V^+ - E + \delta. $$
\end{theorem}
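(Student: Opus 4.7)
The plan is to combine Lemma \ref{lem5.11} with Theorem \ref{thm}. Since $\mathscr{T}$ has $\delta-1$ isolated subdivided cells, the number $C$ in Lemma \ref{lem5.11} equals $\delta-1$, so the lemma gives the direct sum decomposition
$$\overline{\mathbf{S}}(2,2,1,1,\mathscr{T})=\bigoplus_{i=0}^{\delta-1}\overline{\mathbf{S}}(2,2,1,1,\mathscr{V}_i),$$
with each $\mathscr{V}_i$ crossing-vertex connected. Taking dimensions of both sides and then applying Theorem \ref{thm} to each summand yields
$$\dim \overline{\mathbf{S}}(2,2,1,1,\mathscr{T})=\sum_{i=0}^{\delta-1}\bigl(V^+(\mathscr{V}_i)-E(\mathscr{V}_i)+1\bigr)=\sum_{i=0}^{\delta-1}V^+(\mathscr{V}_i)-\sum_{i=0}^{\delta-1}E(\mathscr{V}_i)+\delta.$$
Hence the theorem reduces to the two combinatorial identities
$$\sum_{i=0}^{\delta-1}V^+(\mathscr{V}_i)=V^+ \quad\text{and}\quad \sum_{i=0}^{\delta-1}E(\mathscr{V}_i)=E.$$

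To verify these, the plan is to account for the vertices and l-edges cell by cell according to the decomposition. For each isolated subdivided cell $c_i$ (corresponding to $\mathscr{V}_i$ with $i\geqslant 1$), the crossing vertices of $\mathscr{T}$ lying strictly inside $c_i$ are exactly the crossing vertices of $\mathscr{V}_i$, since $\mathscr{V}_i$ reproduces the subdivision structure of $c_i$ verbatim. The four corners of $c_i$ in $\mathscr{T}$ are left untouched by the subdivision of $c_i$, so they retain their status from $\mathscr{T}^{k-1}$ and are already counted in $\mathscr{V}_0$; in $\mathscr{V}_i$ they appear as boundary vertices and contribute nothing to $V^+(\mathscr{V}_i)$. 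The midpoints of the edges of $c_i$ are T-vertices in $\mathscr{T}$ (the neighbors of $c_i$ at the same level are unsubdivided, by the definition of ``isolated'') and boundary vertices in $\mathscr{V}_i$, so they too contribute to neither $V^+$ count. An analogous inspection of interior l-edges shows that every l-edge of $\mathscr{T}$ lying inside some $c_i$ is an interior l-edge of the corresponding $\mathscr{V}_i$ (its endpoints, T-vertices in $\mathscr{T}$, become boundary vertices in $\mathscr{V}_i$ but still terminate the l-edge), while l-edges of $\mathscr{T}$ lying outside every isolated cell coincide exactly with the interior l-edges of $\mathscr{V}_0$. Summing up, both identities hold, and substitution gives $\dim \overline{\mathbf{S}}(2,2,1,1,\mathscr{T})=V^+-E+\delta$.

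The main obstacle is this combinatorial bookkeeping, in particular convincing oneself that no l-edge is split or merged when passing between $\mathscr{T}$ and the $\mathscr{V}_i$'s; the key point is that the ``isolated'' hypothesis prevents any l-edge of $\mathscr{T}$ from crossing the boundary of $c_i$, so l-edges partition cleanly among the $\mathscr{V}_i$. A minor technical point is that Theorem \ref{thm} assumes $V^+(\mathscr{V}_i)>0$; for $i\geqslant 1$ this is automatic (the center of the first subdivision of $c_i$ is always a crossing vertex of $\mathscr{V}_i$), and the degenerate possibility $V^+(\mathscr{V}_0)=0$ can be handled by a direct argument or excluded as trivial, since any nondegenerate hierarchical T-mesh has a genuine tensor-product structure at level $0$.
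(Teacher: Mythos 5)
Your proposal is correct and follows essentially the same route as the paper: decompose via Lemma \ref{lem5.11} into the crossing-vertex connected pieces $\mathscr{V}_i$ with $C=\delta-1$, apply Theorem \ref{thm} to each, and sum the counts $V_i^+-E_i+1$. The only difference is one of detail: you spell out the combinatorial bookkeeping showing $\sum_i V^+(\mathscr{V}_i)=V^+$ and $\sum_i E(\mathscr{V}_i)=E$ and note the $V^+>0$ hypothesis of Theorem \ref{thm}, both of which the paper passes over with the single remark that the $\mathscr{V}_i$ share no crossing vertices or interior l-edges.
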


\begin{proof}
Suppose $\mathscr{U}_j$ and $\mathscr{V}_j$, $j=0, 1, \ldots, C$ are
defined as in the proof of Lemma \ref{lem5.11}. Then it follows that
$C = \delta -1$ and, according to Lemma \ref{lem5.11},
$$ \dim \overline{\mathbf{S}}(2,2,1,1,\mathscr{T}) = \sum_{i=0}^C
\dim \overline{\mathbf{S}}(2,2,1,1,\mathscr{V}_i). $$ Assume in
$\mathscr{V}_i$ there are $V_i^+$ crossing vertices and $E_i$
interior l-edges. Then
$$ \dim \overline{\mathbf{S}}(2,2,1,1,\mathscr{T}) = \sum_{i=0}^C
(V_i^+-E_i+1). $$ Since any two different meshes among $V_i$ do not
share any common crossing vertices and interior l-edges, it follows
that
$$ \sum_{i=0}^C (V_i^+-E_i+1) = V^+-E+C+1 = V^+-E+\delta. $$
Hence we have
$$ \dim \overline{\mathbf{S}}(2,2,1,1,\mathscr{T}) = V^+ - E + \delta. $$
\end{proof}

\subsection{Some Notes on Construction of Basis Functions}

After obtaining the dimension formulae of biquadratic spline spaces
over hierarchical T-meshes, we naturally consider how to construct
their basis functions with some good properties as stated for
bilinear basis functions in Section \ref{sec3.4}.

To do so, we fist need to make clear of the topological meaning of
$V^+-E+\delta$.

\begin{definition}
Given a hierarchical T-mesh $\mathscr{T}$, one can construct a graph
$\mathscr{G}$ by keeping all the crossing vertices and the line
segments with two endpoints being crossing vertices, and removing
all the other vertices and the edges in $\mathscr{T}$. $\mathscr{G}$
is called the \textbf{crossing-vertex-relationship graph} (CVR graph
for short) of $\mathscr{T}$.
\end{definition}

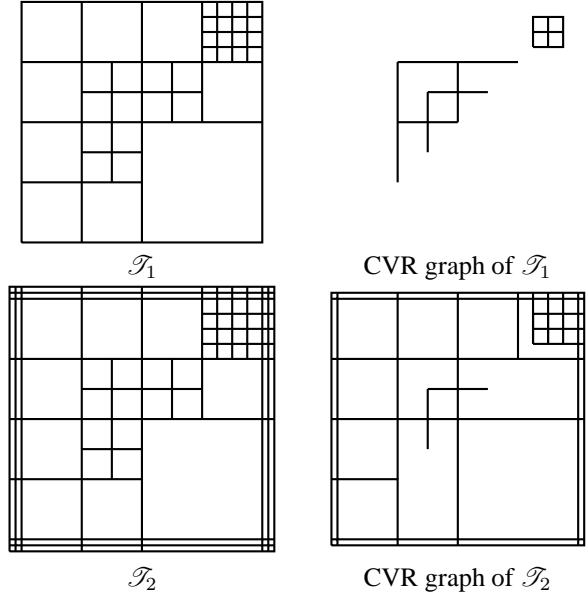
\begin{figure}[!ht]
\begin{center}
\psset{unit=0.8cm,linewidth=0.8pt}
\begin{tabular}{c@{\hspace*{1cm}}c}
\begin{pspicture}(0,0)(4,4)
\psline(0,0)(0,4)(4,4)(4,0)(0,0)
\psline(0,1)(2,1)\psline(1,1.5)(2,1.5)\psline(0,2)(4,2)\psline(1,2.5)(3,2.5)\psline(0,3)(4,3)
\psline(1,0)(1,4)\psline(1.5,1)(1.5,3)\psline(2,0)(2,4)\psline(2.5,2)(2.5,3)\psline(3,2)(3,4)
\psline(3,3.25)(4,3.25)\psline(3,3.5)(4,3.5)\psline(3,3.75)(4,3.75)
\psline(3.25,3)(3.25,4)\psline(3.5,3)(3.5,4)\psline(3.75,3)(3.75,4)
\end{pspicture} &
\begin{pspicture}(0,0)(4,4)
\psline(1,2)(2,2)\psline(1.5,2.5)(2.5,2.5)\psline(1,3)(3,3)
\psline(1,1)(1,3)\psline(1.5,1.5)(1.5,2.5)\psline(2,2)(2,3)
\psline(3.25,3.25)(3.75,3.25)(3.75,3.75)(3.25,3.75)(3.25,3.25)
\psline(3.25,3.5)(3.75,3.5)\psline(3.5,3.25)(3.5,3.75)
\end{pspicture}\\
$\mathscr{T}_1$  & CVR graph of $\mathscr{T}_1$
\\[2mm] \begin{pspicture}(0,0)(4,4)
\psline(0,0)(0,4)(4,4)(4,0)(0,0)\psline(-0.1,-0.1)(-0.1,4.1)(4.1,4.1)(4.1,-0.1)(-0.1,-0.1)
\psline(-0.2,-0.2)(-0.2,4.2)(4.2,4.2)(4.2,-0.2)(-0.2,-0.2)
\psline(-0.1,-0.2)(-0.1,-0.1)\psline(-0.2,-0.1)(-0.1,-0.1)
\psline(0,-0.2)(0,0)\psline(-0.2,0)(0,0)\psline(-0.2,4)(0,4)(0,4.2)
\psline(-0.2,4.1)(-0.1,4.1)(-0.1,4.2)\psline(4.2,4)(4,4)(4,4.2)\psline(4.2,4.1)(4.1,4.1)(4.1,4.2)
\psline(4,-0.2)(4,0)(4.2,0)\psline(4.1,-0.2)(4.1,-0.1)(4.2,-0.1)
\psline(0,1)(2,1)\psline(1,1.5)(2,1.5)\psline(0,2)(4,2)\psline(1,2.5)(3,2.5)\psline(0,3)(4,3)
\psline(1,0)(1,4)\psline(1.5,1)(1.5,3)\psline(2,0)(2,4)\psline(2.5,2)(2.5,3)\psline(3,2)(3,4)
\psline(3,3.25)(4,3.25)\psline(3,3.5)(4,3.5)\psline(3,3.75)(4,3.75)
\psline(3.25,3)(3.25,4)\psline(3.5,3)(3.5,4)\psline(3.75,3)(3.75,4)
\psline(1,0)(1,-0.2)\psline(2,-0.2)(2,0)\psline(4,2)(4.2,2)\psline(4,3)(4.2,3)
\psline(4,3.25)(4.2,3.25)\psline(4,3.5)(4.2,3.5)\psline(4,3.75)(4.2,3.75)
\psline(3.25,4)(3.25,4.2)\psline(3.5,4)(3.5,4.2)\psline(3.75,4)(3.75,4.2)
\psline(3,4)(3,4.2)\psline(2,4)(2,4.2)\psline(1,4)(1,4.2)\psline(-0.2,1)(0,1)
\psline(-0.2,3)(0,3)\psline(-0.2,2)(0,2)
\end{pspicture} &
\begin{pspicture}(0,0)(4,4)
\psline(-0.1,-0.1)(-0.1,4.1)(4.1,4.1)(4.1,-0.1)(-0.1,-0.1)
\psline(0,0)(0,4)(4,4)(4,0)(0,0)\psline(-0.1,0)(0,0)(0,-0.1)\psline(-0.1,4)(0,4)(0,4.1)
\psline(4.1,4)(4,4)(4,4.1)\psline(4,-0.1)(4,0)(4.1,0)\psline(1,-0.1)(1,1)(-0.1,1)
\psline(2,-0.1)(2,2)(4.1,2)\psline(4.1,3)(3,3)(3,4.1)
\psline(1,2)(2,2)\psline(1.5,2.5)(2.5,2.5)\psline(1,3)(3,3)
\psline(1,1)(1,3)\psline(1.5,1.5)(1.5,2.5)\psline(2,2)(2,3)
\psline(3.25,3.25)(3.75,3.25)(3.75,3.75)(3.25,3.75)(3.25,3.25)
\psline(3.25,3.5)(3.75,3.5)\psline(3.5,3.25)(3.5,3.75)\psline(3.75,3.25)(4.1,3.25)
\psline(3.75,3.5)(4.1,3.5)\psline(4.1,3.75)(3.75,3.75)(3.75,4.1)\psline(3.5,3.25)(3.5,4.1)
\psline(3.25,3.25)(3.25,4.1)\psline(2,3)(2,4.1)\psline(1,4.1)(1,3)(-0.1,3)\psline(1,2)(-0.1,2)
\end{pspicture} \\[2mm]
$\mathscr{T}_2$  & CVR graph of $\mathscr{T}_2$
\end{tabular}
\caption{Two examples of CVR graphs.\label{cvrg}}
\end{center}
\end{figure}
See Figure \ref{cvrg} for two examples. Here $\mathscr{T}_1$ has an
isolated subdivided cell at its right-top part. Hence $\delta = 2$.
Its CVR graph has two disconnected parts. $\mathscr{T}_2$ is an
extended T-mesh of $\mathscr{T}_1$ with respect to the spline space
${\mathbf{S}}(2,2,1,1,\mathscr{T}_1)$. $\mathscr{T}_2$ has no
isolated subdivided cell. The corresponding CVR graph is connected.
The following theorem states the relationship between $V^+ - E +
\delta$ in $\mathscr{T}$ and the cell number $F_\mathscr{G}$ in
$\mathscr{G}$.

\begin{theorem}
\label{hint} Given a hierarchical T-mesh $\mathscr{T}$ with $V^+$
crossing vertices, $E$ interior l-edges and $\delta-1$ isolated
subdivided cells, suppose there are $F_\mathscr{G}$ cells in its CVR
graph $\mathscr{G}$. Then it follows that \begin{equation} V^+ - E +
\delta = F_\mathscr{G}. \label{eqn5.10}\end{equation}
\end{theorem}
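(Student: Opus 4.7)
The plan is to interpret the identity $V^+ - E + \delta = F_\mathscr{G}$ as Euler's formula in disguise, applied to the planar CVR graph $\mathscr{G}$. The three quantities on the left will correspond respectively to the vertex count, the edge count, and the number of connected components of $\mathscr{G}$.

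First I would verify the edge count of $\mathscr{G}$. For each interior l-edge $l$ of $\mathscr{T}$ carrying $k_l$ crossing vertices in its interior (note the endpoints of an l-edge are T-vertices or boundary vertices, hence never crossing vertices), the portion of $l$ contributes exactly $k_l - 1$ edges to $\mathscr{G}$, namely the segments joining consecutive crossing vertices along $l$. In a hierarchical T-mesh, every interior l-edge carries at least one crossing vertex, since the recursive subdivision places a crossing vertex at the center of every inserted cross, which lies in the interior of both half-cross l-edges, and previously existing crossing vertices are never removed. Since every crossing vertex lies on exactly one horizontal and one vertical interior l-edge, $\sum_l k_l = 2V^+$, and summing over the $E$ interior l-edges gives
\[ E_\mathscr{G} = \sum_l (k_l - 1) = 2V^+ - E. \]

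Next I would show that $\mathscr{G}$ has exactly $\delta$ connected components. Using the disjoint decomposition $\mathscr{T} = \mathscr{V}_0 \sqcup \mathscr{V}_1 \sqcup \cdots \sqcup \mathscr{V}_{\delta-1}$ constructed for Lemma \ref{lem5.11}, each $\mathscr{V}_i$ is crossing-vertex-connected and therefore contributes one connected subgraph of $\mathscr{G}$. Distinct subgraphs remain disconnected in $\mathscr{G}$ because any path between crossing vertices in different $\mathscr{V}_i$'s would have to cross the boundary of an isolated subdivided cell, and that boundary consists entirely of T-vertices, which are not represented in $\mathscr{G}$. Applying Euler's formula for a planar graph with $c$ connected components, $V - E + F = 1 + c$, where $F$ counts all faces including the unbounded one, with $V = V^+$, $E = E_\mathscr{G}$, $c = \delta$, and $F = F_\mathscr{G} + 1$, yields
\[ V^+ - (2V^+ - E) + (F_\mathscr{G} + 1) = 1 + \delta, \]
which rearranges directly to $F_\mathscr{G} = V^+ - E + \delta$.

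The main obstacle lies in the edge count: one must rigorously rule out any interior l-edge that carries no crossing vertex, for otherwise a correction term counting such l-edges would spoil the formula $E_\mathscr{G} = 2V^+ - E$. This requires tracking how l-edges evolve through the recursive subdivision process of a hierarchical T-mesh, verifying both that each newly inserted cross donates its center as a crossing vertex to both of the new half-cross l-edges, and that the T-vertices created on cell boundaries neither split pre-existing l-edges nor remove any of their crossing vertices. A secondary subtlety is the planarity bookkeeping when the sub-graphs arising from isolated subdivided cells sit nested inside faces of the outer component, but this is handled uniformly by the version of Euler's formula used above, in which the unbounded face and the nested structure enter only through the component count $c = \delta$.
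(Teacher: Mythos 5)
Your proof is correct and follows essentially the same route as the paper's: Euler's formula for the planar CVR graph with $\delta$ connected components, combined with the edge count $E_\mathscr{G} = 2V^+ - E$ obtained by summing $k_l - 1$ over the interior l-edges. The only differences are matters of added care (justifying the component count $\delta$ via the decomposition into the $\mathscr{V}_i$ and worrying about interior l-edges carrying no crossing vertex), points that the paper's own, terser proof passes over silently.
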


\begin{proof}
Suppose there are $E_\mathscr{G}$ edges in $\mathscr{G}$. According
to the definition of the CVR graph, it follows that there are
$\delta$ disconnected parts in $\mathscr{G}$. Hence with the Euler
formula, one gets
$$ F_\mathscr{G} - E_\mathscr{G} + V^+ = \delta. $$
Hence in order to prove Equation \eqref{eqn5.10}, one just needs to
show that $2 V^+ = E + E_\mathscr{G}$. In fact, consider any l-edge
$\ell_i$ with $V^+_i$ crossing vertices. Then $\ell_i$ generates
$V^+_i-1$ edges in $\mathscr{G}$. After running through all the
l-edges in $\mathscr{T}$, each of the crossing vertices is met
twice. Hence it follows that $2V^+ =E + E_\mathscr{G}$, which
finishes the proof of the theorem.
\end{proof}

Theorem \ref{hint} hints us that the basis functions of the
biquadratic spline space over a hierarchical T-mesh could be
constructed around the cells in its CVR graph. Some experiments have
been done on this idea, which will be explored in the future.

Furthermore, we expect that CVR graphs will play an important role
in dimension analysis and basis function construction of higher
degree spline spaces over (hierarchical) T-meshes. For example, we
have the following conjecture:

\begin{conj}
Suppose $\mathscr{T}$ is a hierarchical T-mesh, and its CVR graph is
$\mathscr{G}$. As $m,n \geqslant 2$, it follows that
$$ \dim \overline{\mathbf{S}}(m,n,m-1,n-1,\mathscr{T}) = \dim
\overline{\mathbf{S}}(m-2,n-2,m-3,n-3,\mathscr{G}), $$ where the
spline space $\overline{\mathbf{S}}(m,n,\alpha,\beta,\mathscr{G})$
is defined in a similar way with the spline space over a T-mesh.
\end{conj}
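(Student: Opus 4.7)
The plan is to exhibit an explicit isomorphism between the two spaces via the iterated mixed partial derivative operator
\[ \mathcal{D}^2 := \frac{\partial^4}{\partial x^2 \partial y^2}, \]
which is injective on $\overline{\mathbf{S}}(m,n,m-1,n-1,\mathscr{T})$ as the composition of two injective factors (each $\mathcal{D}$ is injective under HBC). The image of $\mathcal{D}^2$ lies in $\overline{\mathbf{S}}(m-2,n-2,m-3,n-3,\mathscr{T})$, and the claim reduces to: this image coincides, under a natural embedding, with $\overline{\mathbf{S}}(m-2,n-2,m-3,n-3,\mathscr{G})$. As a sanity check, for $m=n=2$ one recovers $\dim \overline{\mathbf{S}}(2,2,1,1,\mathscr{T}) = V^+-E+\delta = F_{\mathscr{G}} = \dim \overline{\mathbf{S}}(0,0,-1,-1,\mathscr{G})$ via Theorem~\ref{hint} and the direct count of piecewise constants on cells of $\mathscr{G}$.

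The first technical step is to characterize the preimage relation $\mathcal{I}^2(g) \in \overline{\mathbf{S}}(m,n,m-1,n-1,\mathscr{T})$ for a given $g \in \overline{\mathbf{S}}(m-2,n-2,m-3,n-3,\mathscr{T})$, where $\mathcal{I}^2$ is the iterated anti-derivative inverting $\mathcal{D}^2$. Integrating by parts along each interior horizontal l-edge $l^h_i$ and tracking the $C^{m-1}$-jump conditions yields integral constraints of the form
\[ \int_{l^h_i} \frac{\partial^{m-1}}{\partial y^{m-1}} g(s,y^h_i-)\, ds = \int_{l^h_i} \frac{\partial^{m-1}}{\partial y^{m-1}} g(s,y^h_i+)\, ds, \]
generalizing Lemma~\ref{lemm4.1}, and symmetrically for vertical l-edges. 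Then, following the conversion strategy of Lemmas~\ref{lemm4.2} and \ref{lemm5.2}, these can be rewritten as linear combinations of integrals over support l-edges and, ultimately, over the segments between consecutive crossing vertices.

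The central step is to identify the constrained image with $\overline{\mathbf{S}}(m-2,n-2,m-3,n-3,\mathscr{G})$. The guiding heuristic is that only l-edge segments lying between two crossing vertices carry independent cross-edge information, while segments terminating at a T-junction are fully determined by smoothness already built into the target space. To formalize this I would (i) construct a linear embedding $\overline{\mathbf{S}}(m-2,n-2,m-3,n-3,\mathscr{G}) \hookrightarrow \overline{\mathbf{S}}(m-2,n-2,m-3,n-3,\mathscr{T})$ by extending a CVR-space function across each T-vertex in the unique way compatible with the prescribed $C^{m-3,n-3}$ smoothness, (ii) verify that this embedding has exactly the image of $\mathcal{D}^2$ by matching the two descriptions of the jump conditions edge-by-edge, and (iii) reduce a general hierarchical T-mesh to the crossing-vertex-connected case via a direct sum decomposition analogous to Lemma~\ref{lem5.11}, applied both to $\mathscr{T}$ and to the correspondingly decomposed $\mathscr{G}$.

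The main obstacle is step (ii). The interior l-edges of $\mathscr{T}$ and the edges of $\mathscr{G}$ are not in bijection: a single $\mathscr{T}$-l-edge passing through $k$ interior crossing vertices splits into $k+1$ CVR-edge segments, of which only the $k-1$ \emph{interior} segments contribute fresh independent conditions, while the two outer segments must be absorbed into smoothness already enforced by HBC and by continuity at the terminal T-vertices. Carrying out this bookkeeping at higher order, and verifying that the defective rank matches precisely, is the principal technical difficulty. I expect that a generalization of the hierarchical basis functions of Subsection~\ref{hbf} to degree $m-2$, combined with the ordering argument of Subsection~\ref{ordering} to obtain a triangular coefficient matrix as in the proof of Theorem~\ref{thm}, will provide the right combinatorial framework for making the rank computation precise.
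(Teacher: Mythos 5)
The statement you are trying to prove is not proved in the paper at all: it is stated as an open conjecture. The authors only establish the case $m=n=2$, and they do so indirectly --- they compute $\dim\overline{\mathbf{S}}(2,2,1,1,\mathscr{T})=V^+-E+\delta$ by the rank argument of Theorem \ref{thm} and the decomposition of Lemma \ref{lem5.11}, and then separately prove the purely combinatorial identity $V^+-E+\delta=F_{\mathscr{G}}$ by Euler's formula (Theorem \ref{hint}); for $m=n=3$ they report only numerical experiments. So there is no ``paper proof'' to match, and your submission, as written, is a research plan rather than a proof: the decisive steps --- your step (ii) and the final rank computation --- are exactly the content of the conjecture and are deferred with ``I expect that \dots''.

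Beyond the fact that the core is missing, several of the intermediate claims would need substantial repair. First, the proposed generalization of Lemma \ref{lemm4.1} to a single constraint $\int_{l^h_i}\partial_y^{m-1}g(s,y^h_i-)\,ds=\int_{l^h_i}\partial_y^{m-1}g(s,y^h_i+)\,ds$ per l-edge is unjustified: with a two-fold antiderivative in each variable, matching $C^{n-1}$ continuity of $\mathcal{I}^2(g)$ across a horizontal l-edge at its T-junction endpoints produces conditions on more than one derivative order, and the $x$-integrations generate moment-type conditions (integrands weighted by $(x_1-s)$) in addition to plain integrals, so the number of constraints per l-edge, and hence the whole dimension count, changes; nothing in the paper's machinery (which stops at $m,n\leqslant 2$) controls their number or independence. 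Second, the embedding in your step (i) is not well defined as stated: $\mathscr{G}$ is not a T-mesh over the same region as $\mathscr{T}$ --- it occupies a smaller, generally disconnected region whose cells do not tile $\Omega$ --- so ``extending a CVR-space function across each T-vertex in the unique way compatible with the prescribed smoothness'' has neither an obvious meaning nor an obvious uniqueness property, and even in the base case $m=n=2$ the paper never constructs such an isomorphism (it only matches two independently computed numbers). Third, your triangular-matrix endgame presupposes knowing $\dim\overline{\mathbf{S}}(m-2,n-2,m-3,n-3,\mathscr{T})$ (or at least a hierarchical basis of it) to play the role that Theorem \ref{thm3.4} and the basis of Subsection \ref{hbf} play for $m=n=2$; for $m,n\geqslant 3$ this is itself unknown, so the induction has no base to stand on. The plan is a reasonable direction, but each of these three points is a genuine gap that must be closed before this can be called a proof.
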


Theorem \ref{hint} states that the conjecture holds as $m=n=2$. As
for $m=n=3$, we have tried many examples, which support this
conjecture as well.

\section{Conclusions}

In this paper the dimension of bilinear and biquadratic spline
spaces over T-meshes are discussed. The basis strategy is by linear
space embedding with an operator of mixed partial derivative. We
obtained the dimension formula of bilinear spline spaces over
general T-meshes, and that of biquadratic spline spaces over
hierarchical T-meshes. Only a lower bound of the dimension is build
for biquadratic spline spaces over general T-meshes.

In the future, the basis function construction of biquadratic
splines spaces and the proposed conjecture will be explored.

\end{document}